\documentclass[final,leqno,onefignum,onetabnum]{siamltex1213}


\usepackage{graphics,graphicx,epstopdf}

\usepackage{longtable}

\usepackage[leqno]{amsmath}
\usepackage{amsxtra, amsfonts,amscd,amssymb,  graphicx,epsf,subfigure}

\usepackage[algo2e,linesnumbered, vlined,ruled]{algorithm2e}

\numberwithin{equation}{section}

\input psfig.sty

\newtheorem{remark}[theorem]{Remark}


\newcommand{\tr}{\mathrm{tr}}

\newcommand{\bX}{\bar X}

\newcommand{\R}{\mathbb{R}}

\newcommand{\C}{\mathbb{C}}

\newcommand{\F}{\mathcal{F}}
\newcommand{\T}{\mathcal{T}}

\newcommand{\xb}{\mathbf{x}}

\newcommand{\half}{\frac{1}{2}}

\newcommand{\be}{\begin{equation}}
\newcommand{\ee}{\end{equation}}
\newcommand{\bee}{\begin{equation*}}
\newcommand{\eee}{\end{equation*}}
\newcommand{\bea}{\begin{eqnarray}}
\newcommand{\eea}{\end{eqnarray}}
\newcommand{\beaa}{\begin{eqnarray*}}
\newcommand{\eeaa}{\end{eqnarray*}}

\usepackage[dvipsnames]{xcolor}
\usepackage[normalem]{ulem}

\newcommand{\revise}[1]{{\color{blue}#1}}

\begin{document}

\title{A Regularized Newton Method for Computing Ground States of Bose-Einstein condensates\thanks{
Part of this work was done when the authors were visiting the Institute for Mathematical Sciences
at the National University of Singapore in 2015.}}
\author{Xinming Wu\thanks{The Key Laboratory of Mathematics
for Nonlinear Sciences, School of Mathematical Sciences, Fudan University, CHINA ({\it wuxinming@fudan.edu.cn}).
Research supported in part by NSFC grants 91330202 and 11301089.}
\and Zaiwen Wen\thanks{Beijing International Center for Mathematical Research, Peking University, CHINA ({\it wenzw@math.pku.edu.cn}).
Research supported in part by NSFC grants 11322109 and 91330202.}
\and Weizhu Bao\thanks{Department of Mathematics and
Center for Computational Science and Engineering, National
University of Singapore, Singapore 119076 ({\it
matbaowz@nus.edu.sg}, URL: http://www.math.nus.edu.sg/\~{}bao/).
Research supported in part  by the Ministry of Education
of Singapore grant R-146-000-196-112.}}

\maketitle

\begin{abstract}
In this paper, we propose a regularized Newton method for  computing ground
states of Bose-Einstein condensates (BECs), which can be formulated as
 an energy minimization problem with a spherical
constraint. The energy functional and constraint
are discretized by either the finite difference, or sine or Fourier
pseudospectral discretization schemes and thus the original infinite dimensional
nonconvex minimization problem is approximated by a finite dimensional constrained
nonconvex minimization problem. Then an initial solution is first constructed by using a feasible gradient type
method, which is an explicit scheme and maintains the spherical constraint automatically.
To accelerate the convergence of the gradient type method, we approximate the energy
functional by its second-order Taylor expansion with a regularized term at each
Newton iteration and adopt a cascadic multigrid technique for selecting initial data.
It leads to a standard trust-region subproblem and we
 solve it again by the feasible gradient type method.
The convergence of the regularized Newton method is established by adjusting
the regularization parameter as the standard trust-region strategy.
Extensive numerical experiments on challenging examples,
including a BEC in three dimensions with an optical lattice potential and
rotating BECs in two dimensions with rapid rotation and strongly repulsive interaction, show that
our method is efficient, accurate and robust.

\end{abstract}

\begin{keywords}
Bose-Einstein condensation, Gross-Pitaevskii equation, ground state,
energy functional, spherical constraint, gradient type method, regularized Newton method.
\end{keywords}

\section{Introduction}

Since the first experimental realization in dilute bosonic atomic gases~\cite{AEMWC-1995, BSTH-1995, DMAvDKK-1995}, Bose-Einstein condensation (BEC) has attracted great interest in the atomic, molecule and optical (AMO) physics community and condense matter community~\cite{Fetter-2009, Leggett-2001, MAHHWC-1999, RAVXK-2001}.
The properties of the condensate at zero or very low temperature are well described by the nonlinear Schr\"odinger equation (NLSE) for the macroscopic wave function $\psi=\psi(\xb,t)$,
which is also known as the Gross-Pitaevskii equation (GPE) in
three dimensions (3D) ~\cite{Ang-Ket-2002, DGPS-1999, Gross-1961, Pet-Smi-2002, Pitaevskii-1961, Pit-Str-2003} as
\be\label{GPE-original}
i \hbar\frac{\partial\psi(\xb,t)}{\partial t} = \left( -\frac{\hbar^2}{2m}\nabla^2 + V(\xb) + NU_0|\psi(\xb,t)|^2 - \Omega L_z\right)\psi(\xb,t),
\ee
where $t$ is time, $\xb=(x,y,z)^\top\in\R^3$ is the spatial coordinate vector, $m$ is the atomic mass, $\hbar$ is the Planck constant, $N$ is the number of atoms in the condensate, $\Omega$ is an angular velocity, $V(\xb)$ is an external trapping potential.
The term $U_0=\frac{4\pi\hbar^2a_s}{m}$ describes the interaction between atoms in the condensate with the $s$-wave scattering length $a_s$ (positive for repulsive interaction and negative for attractive interaction) and
\[
L_z = xp_y-yp_x = -i\hbar(x\partial_y-y\partial_x)
\]
is the $z$-component of the angular momentum $\mathbf{L} = \xb\times\mathbf{P}$
with the momentum operator $\mathbf{P} = -i\hbar\nabla = (p_x,p_y,p_z)^\top$.
It is also necessary to normalize the wave function properly, i.e.,
\be\label{normalization-original}
\|\psi(\cdot,t)\|^2:=\int_{\R^3} |\psi(\xb,t)|^2 dx = 1.
\ee

By using a proper nondimensionalization and dimension reduction in some limiting
trapping frequency regimes \cite{Bao-Wang-Markowich-2005,Fetter-2009},
we can obtain the dimensionless GPE in $d$-dimensions
($d=1,2,3$ when $\Omega=0$ for a non-rotating BEC and $d=2,3$ when $\Omega\ne0$ for a rotating BEC)
\cite{Bao-Cai-2013-review,Pet-Smi-2002,Pit-Str-2003}:
\be\label{GPE}
i \frac{\partial\psi(\xb,t)}{\partial t} = \left(-\frac12 \nabla^2 + V(\xb) + \beta |\psi(\xb,t)|^2 - \Omega L_z\right)\psi(\xb,t), \quad \xb\in\R^d, \ t>0,
\ee
with the normalization condition
\be\label{normalization-Rd}
\|\psi(\cdot,t)\|^2:=\int_{\R^d} |\psi(\xb,t)|^2 d\xb = 1,
\ee
where $\beta\in {\mathbb R}$ is
the dimensionless interaction
coefficient,  $L_z=-i(x\partial_y-y\partial_x)$
and  $V(\xb)$ is a dimensionless real-valued external trapping potential.
In most applications of BEC, the harmonic potential is used \cite{Bao-Du-2004,Bao-Tang-2003}
\be\label{harmpen}
V(\xb)=\frac{1}{2}\left\{\begin{array}{ll}
\gamma_x^2 x^2, &d=1,\\
\gamma_x^2 x^2+\gamma_y^2 y^2, &d=2,\\
\gamma_x^2 x^2+\gamma_y^2 y^2+\gamma_z^2 z^2, &d=3,\\
\end{array}
\right.
\ee
where $\gamma_x$, $\gamma_y$ and $\gamma_z$ are three given positive constants.

Define the energy functional
\be\label{energy}
E(\phi) = \int_{\R^d}\left[\frac12|\nabla\phi(\xb)|^2 + V(\xb)|\phi(\xb)|^2 + \frac{\beta}{2}|\phi(\xb)|^4 - \Omega\bar\phi(\xb)L_z\phi(\xb)\right] d\xb,
\ee
where $\bar{f}$ denotes the complex conjugate of  $f$,
then the  ground state of a BEC is usually defined as the minimizer
of the following nonconvex minimization problem
\cite{AD,Bao-Cai-2013-review,LiebSeiringerPra2000,Pet-Smi-2002,Pit-Str-2003}:
\be\label{prob-min}
\phi_g = {\rm arg\;min}_{\phi \in S} \quad  E (\phi),
\ee
where the spherical constraint $S$ is defined as
\be \label{normg}
S = \left\{ \phi \ |\  E(\phi)<\infty,\ \int_{\R^d} |\phi(\xb)|^2 d\xb = 1  \right\}.
\ee
It can be verified that the first-order optimality condition (or Euler-Lagrange equation)
of \eqref{prob-min} is
the nonlinear eigenvalue problem, i.e., find $(\mu\in{\mathbb R},\phi(\xb))$ such that
\be\label{prob-eig}
\mu\,\phi(\xb)= -\frac12 \nabla^2\phi(\xb) + V(\xb)\phi(\xb) + \beta
|\phi(\xb)|^2\phi(\xb) - \Omega L_z\phi(\xb), \quad \xb\in\R^d,
\ee
with the spherical constraint
\be
\label{normalization}
\|\phi\|^2:=\int_{\R^d} |\phi(\xb)|^2 d\xb =  1.
\ee
Any eigenvalue $\mu$ (or chemical potential in the physics literatures)
of \eqref{prob-eig}-\eqref{normalization} can be computed from its corresponding eigenfunction $\phi(\xb)$ by
\cite{Bao-Cai-2013-review,Pet-Smi-2002,Pit-Str-2003}
\bee\label{chemical-potential}
\mu =  \mu(\phi) =
 E(\phi) + \int_{\R^d} \frac{\beta}{2}|\phi(\xb)|^4 d\xb.
\eee
In fact, (\ref{prob-eig}) can also be obtained from the GPE \eqref{GPE} by taking the anstaz
$\psi(\xb,t) = e^{-i\mu t}\, \phi(\xb)$, and thus it is also called as time-independent GPE 
\cite{Bao-Cai-2013-review,Pet-Smi-2002,Pit-Str-2003}.

One of the two major problems in the theoretical study of BEC is
to analyze and efficiently compute the ground state $\phi_g$ in
(\ref{prob-min}), which plays an important role in understanding the
theory of BEC as well as predicting and guiding experiments.
For the existence and uniqueness as well as non-existence
of the ground state under different parameter regimes,
we refer to
\cite{Bao-Cai-2013-review,LiebSeiringer1,LiebSeiringerPra2000}
and references therein.
Different numerical methods have been proposed
 for computing the ground state of BEC in
the literatures, which can be classified into two different classes
through different formulations and numerical techniques.
The first class of numerical methods has been designed via the formulation of
 the nonlinear eigenvalue
problem (\ref{prob-eig}) under the constraint (\ref{normalization}) with
different numerical  techniques, such as
the Runge-Kutta type method \cite{Adhikari-2000,Edw-Bur-1995}
for a BEC in 1D and 2D/3D with radially/spherically symmetric external trap,
the simple analytical type method \cite{Dodd-1996},
the direct inversion in the iterated subspace method \cite{Sch-Fed-1999},
the finite element approximation via the Newton's method for solving the
nonlinear system \cite{Bao-Tang-2003},
the continuation method \cite{CC2} and
the Gauss-Seidel-type method \cite{CLS-2005}. In these
numerical methods, the time-independent nonlinear eigenvalue
problem (\ref{prob-eig}) and the constraint (\ref{normalization})
are discretized in space via different numerical methods,
such as finite difference, spectral and finite element methods,
and the ground state is computed numerically via different iterative techniques.
The second class of numerical methods has been constructed via
the formulation of the constrained minimization problem (\ref{prob-min})
with different gradient techniques for dealing with the minimization and/or projection
techniques for handling the spherical constraint, such as
the explicit imaginary-time algorithm used in the physics literatures
\cite{AD,Aft2003,CCPST-2000,CLS-2005,CST-2000,RHBE-1995},
the Sobolev gradient method \cite{Garcia34},
the normalized gradient flow method via the backward Euler finite
difference (BEFD) or Fourier (or sine) pseudospectral (BEFP) discretization
method
\cite{GPELab-2014,GPELab-2013,Bao-2004,Bao-Chern-Lim-2006,Bao-Du-2004,Bao-Wang-Markowich-2005}
which has been
extended to compute ground states of spin-1 BEC
\cite{Bao-Chern-Zhang-2013,Bao-Wang-2007},
dipolar BEC \cite{Bao2010}
and spin-orbit coupled BEC \cite{Bao-Cai-2015},
 and the new Sobolev gradient method \cite{Dan2010}.
In these numerical methods, the time-independent infinitely dimensional
constrained minimization problem (\ref{prob-min}) is first re-formulated to a time-dependent
gradient-type partial different equation (PDE) which is then discretized in space and time
via different discretization techniques and the ground state is obtained
numerically as the steady state of the gradient-type PDE with a proper choice of initial data.

Among those existing numerical methods for computing the ground state
of BEC, most of them converge only linearly in the iteration and/or
require to solve a large-scale linear system per iteration.
Thus the computational cost is quite expensive especially for the large scale problems,
such as the ground state of a BEC in 3D with an optical lattice potential or
a rotating BEC with fast rotation and/or strong interaction.
On the other hand, over the last two decades, some advanced optimization methods
have been developed for computing the minimizers of finite dimensional
nonconvex minimization problems, such as the Newton method via
trust-region strategy \cite{ConnGouldTointTrustRegionBook,NocedalWright06,SunYuan06}
which converges quadratically or super-linearly.
The main aim of this paper is to propose
an efficient and accurate regularized Newton method for computing the ground
states of BEC by integrating proper PDE discretization techniques
and advanced modern optimization methods.
By discretizing the energy functional (\ref{energy}) and the spherical constraint
(\ref{normalization}) with either the finite difference, or sine or Fourier
pseudospectral discretization schemes, we approximate the original infinite dimensional
constrained minimization problem (\ref{prob-min}) by a finite dimensional
minimization problem with a spherical constraint. Then we present an explicit feasible gradient type
optimization method to construct an initial solution,
 which generates new trial points along the gradient on the unit ball so that the
constraint is preserved automatically.
The gradient type method is an explicit iterative scheme and the main costs arise from the assembling of the energy functional and its projected gradient on the manifold.
Although this method often works well on well-posed problems, the convergence of
the gradient type method is often slowed down when some parameters in the energy functional become large,
e.g. $\beta\gg1$ and $\Omega$ is near the fast rotation regime in (\ref{prob-min}).
To accelerate the convergence of the iteration, we propose a regularized Newton type method by
approximating the energy functional via its second-order Taylor expansion with a regularized term
at each Newton iteration with the regularization parameter adjusted via the standard trust-region strategy
\cite{ConnGouldTointTrustRegionBook,NocedalWright06,SunYuan06}.
The corresponding regularized Newton subproblem is a standard trust-region
subproblem which can be solved efficiently by  the gradient type method
  since  it is not necessary to solve the subproblem to a
high accuracy, especially, at the early stage of the  algorithm when a good starting
guess is not available.  Furthermore, the numerical performance of the gradient
method can be  improved by the state-of-the-art acceleration
techniques such as Barzilai-Borwein steps and nonmonotone line
search which guarantees global convergence
\cite{ConnGouldTointTrustRegionBook,NocedalWright06,SunYuan06}.
In addition, we adopt a cascadic multigrid technique \cite{Born}
to select a good starting guess at the finest mesh in the computation,
which significantly reduces the computational cost.
Extensive numerical experiments demonstrate that
our approach can quickly reach the vicinity of an optimal solution and produce
a moderately accurate approximation, even for the very challenging and
difficult cases, such as computing the ground state of a BEC in 3D with an optical lattice potential or
a rotating BEC with fast rotation and/or strong interaction.

The rest of this paper is organized as follows.
Different discretizations of the energy functional and the spherical constraint
via the finite difference, sine and Fourier pseudospectral schemes are
introduced in section \ref{discretization}.
In section \ref{optimization}, we present the gradient type method and the regularized
Newton algorithm for solving the discretized minimization problem with a spherical constraint.
Numerical results are reported in section \ref{numerical} to illustrate the efficiency and accuracy
of our algorithms. Finally, some concluding remarks are given in section \ref{conclusion}.
Throughout this paper, we adopt the standard linear algebra notations. In addition,
given $X \in \C^{m\times n}$,  the operators $\bX$, $X^*$, $\Re(X)$ and $\Im(X)$ denote the complex
 conjugate, the complex conjugate transpose, the real and imaginary parts of $X$, respectively.

\section{Discretization of the energy functional and constraint} \label{discretization}
In this section, we introduce different discretizations of the energy functional (\ref{energy}) and constraint
(\ref{normalization})  in the constrained minimization problem \eqref{prob-min} and reduce it to a finite dimensional
minimization problem with a spherical constraint.
Due to the external trapping potential, the ground state of \eqref{prob-min}
decays exponentially as $|\xb|\to\infty$ \cite{Bao-Cai-2013-review,LiebSeiringer1,LiebSeiringerPra2000}.
Thus we can truncate the energy functional and constraint
from the whole space $\R^d$ to a bounded computational domain $U$
which is chosen large enough such that the truncation error is negligible
with either homogeneous Dirichlet or periodic boundary conditions.
We remark here that, from the analytical
results
\cite{Bao-Cai-2013-review,LiebSeiringer1,LiebSeiringerPra2000},
when $\Omega=0$, i.e., a non-rotating BEC, the ground state $\phi_g$ can be taken as
a real non-negative function; and when $\Omega\ne0$, i.e., a rotating BEC,
it is in general a complex-valued function, which will be adopted in our numerical
computations.

\subsection{Finite difference discretization}
Here we present discretizations of (\ref{energy}) and (\ref{normalization}) truncated
on a bounded computational domain $U$ with homogeneous Dirichlet boundary condition
by approximating spatial derivatives via
the second-order finite difference (FD) method and the definite integrals
via the composite trapezoidal quadrature. For simplicity of notation, we
only introduce the FD discretization in 1D. Extensions to 2D and 3D without/with
rotation are straightforward
and the details are omitted here for brevity.

 For $d=1$, we take $U=(a,b)$ as an interval in 1D. Let $h=(b-a)/N$ be the spatial mesh size
with $N$ a positive even integer and denote $x_j=a+jh$ for $j=0,1,\ldots,N$,
and thus $a=x_0<x_1<\cdots<x_{N-1}<x_N=b$ be the equidistant partition of $U$.
Let $\phi_j$ be the numerical approximation of $\phi(x_j)$ for $j=0,1,\ldots,N$ satisfying
$\phi_0=\phi(x_0)=\phi_N=\phi(x_N)=0$  and denote $\Phi=(\phi_1,\cdots,\phi_{N-1})^\top$.
The energy functional (\ref{energy})  with $d=1$ and $\Omega=0$ can be truncated and discretized as
\bea
\nonumber
E(\phi) &\approx& \int_a^b\left[\frac12(\phi'(x))^2 + V(x)\phi(x)^2 +
\frac{\beta}{2}\phi(x)^4\right] dx \nonumber\\
\label{energy-1d}
&=&\sum_{j=0}^{N-1} \int_{x_j}^{x_{j+1}}\left[ -\frac12 \phi(x)\phi''(x) +
V(x)\phi(x)^2 + \frac{\beta}{2}\phi(x)^4\right] dx \nonumber\\
&\approx& h\sum_{j=1}^{N-1}\left[
-\frac12\phi_j\frac{\phi_{j+1}-2\phi_j+\phi_{j-1}}{h^2} + V(x_j)\phi_j^2 +
\frac{\beta}{2}\phi_j^4\right] \nonumber\\
\nonumber
&=& h\sum_{j=0}^{N-1}\frac12\left(\frac{\phi_{j+1}-\phi_j}{h}\right)^2 +
h\sum_{j=1}^{N-1}\left[V(x_j)\phi_j^2 + \frac{\beta}{2}\phi_j^4\right]\\
\label{energy-discrete-FD} &=& h\left[ \Phi^\top A\Phi + \frac{\beta}{2}\sum_{j=1}^{N-1}\phi_j^4 \right]:=
E_h(\Phi), \label{energyFD1}
\eea
where $A=(a_{jk})\in\R^{(N-1)\times(N-1)}$ is a symmetric tri-diagonal matrix with entries
\bee
a_{jk} = \begin{cases}
\frac{1}{h^2} + V(x_j), & j=k,\\
-\frac{1}{2h^2}, & |j- k|=1,\\
0, &\mbox{ otherwise}.
\end{cases}
\eee
Similarly, the constraint (\ref{normalization}) with $d=1$ can be truncated and discretized as
\be
\|\phi\|^2\approx \int_a^b \phi(x)^2dx =
\sum_{j=0}^{N-1} \int_{x_j}^{x_{j+1}} \phi(x)^2dx \approx h \sum_{j=1}^{N-1} \phi_j^2:=\|\Phi\|_h^2=  1,
\ee
which immediately implies that the set $S$ can be discretized as
\be
S_h = \left\{ \Phi\in\R^{N-1} \ \left|\right.\  E_h(\Phi)<\infty, \; \|\Phi\|_h^2 = 1  \right\}.
\ee
Hence, the original problem (\ref{prob-min}) with $d=1$
can be approximated by the discretized minimization problem via the FD discretization:
\be\label{prob-min-discrete-FD}
\Phi_g = {\rm arg\;min}_{\Phi\in S_h} E_h(\Phi).
\ee

 Denote $G_h = \nabla E_h(\Phi)$ be the gradient of $E_h(\Phi)$, notice \eqref{energy-discrete-FD},
we have
\be
G_h:=\nabla E_h(\Phi) = 2h\left(A\Phi + \beta \Phi^3 \right),
\ee
where $\Phi^3\in \R^{N-1}$ is defined component-wisely as $ (\Phi^3)_j = \phi_j^3$ for $j=1,\ldots,N-1$.
We remark here that, when the FD discretization is applied,
the matrix $A$ is a symmetric positive definite sparse matrix.
In addition, for the analysis of convergence and second order convergence rate
of the above FD discretization, we
refer the reader to~\cite{Can2010,Zhou-2004}.

\subsection{Sine pseudospectral discretization}
For a non-rotating BEC, i.e. $\Omega=0$, when high precision is required such as BEC
with an optical lattice potential,
we can replace the FD discretization by the sine pseudospectral (SP) method
when homogeneous Dirichlet boundary conditions are applied.
Again, we only present the discretization in 1D, and extensions to 2D and 3D without rotation
are straightforward and the details are omitted here for brevity.

For $d=1$, using similar notations as the FD scheme, similarly to
(\ref{energyFD1}),  the energy functional (\ref{energy})
with $d=1$ and $\Omega=0$ truncated on $U$  can be
discretized by the SP method as
\be\label{energy-discrete-SP-1}
E(\phi) \approx h\sum_{j=1}^{N-1}\left[
-\frac12\phi_j\;\partial_{xx}^s\phi|_{x=x_j} + V(x_j)\phi_j^2 +
\frac{\beta}{2}\phi_j^4 \right],
\ee
where $\partial_{xx}^s$ is the sine pseudospectral differential operator
approximating the operator $\partial_{xx}$, defined as
\be\label{spdo1}
\partial_{xx}^s\phi|_{x=x_j} = - \sum_{l=1}^{N-1}\lambda_l^2\;\tilde\phi_l\,\sin\left(\frac{jl\pi}{N}\right),\quad j=1,2,\cdots,N-1,
\ee
with $\{\tilde\phi_l\}_{l=1}^{N-1}$
the coefficients of the discrete sine transform (DST) of $\Phi\in \R^{N-1}$, given as
\be\label{tpll}
\tilde\phi_l = \frac{2}{N}\sum_{j=1}^{N-1}\phi_j\sin\left(\frac{jl\pi}{N}\right),
\qquad \lambda_l = \frac{\pi l}{b-a}, \qquad l=1,2,\cdots,N-1.
\ee
Introduce $\mathrm{V}=\diag(V(x_1), \cdots, V(x_{N-1}))$,
$\Lambda=\diag(\lambda_1^2, \ldots,\lambda_{N-1}^2)$ and
$C = (c_{jk})\in \R^{(N-1)\times(N-1)}$ with entries
$c_{jk} = \sin\left(\frac{jk\pi}{N}\right)$ for $j,k=1,\ldots,N-1$
and denote $\widetilde{\Phi}=\left(\tilde{\phi}_1,\ldots,\tilde{\phi}_{N-1}\right)^\top
=\frac{2}{N}C\Phi$.
Plugging (\ref{spdo1}) and (\ref{tpll}) into (\ref{energy-discrete-SP-1}),
we get
\bea\label{energy-discrete-SP-34}
E(\phi) \approx h\left[\Phi^\top B\Phi + \frac{\beta}{2}\sum_{j=1}^{N-1}\phi_j^4\right]:=E_h(\Phi),
\eea
where $B\in \R^{(N-1)\times(N-1)}$ is a symmetric positive definite matrix defined as
\be
B=\frac{1}{N}C\Lambda C + \mathrm{V}.
\ee
In fact, the first term in \eqref{energy-discrete-SP-34} can be computed
efficiently at cost $O(N\ln N)$ through DST as
\be\label{BCD}
\Phi^\top B\Phi
= \frac{N}{4}\widetilde{\Phi}^\top \Lambda \widetilde{\Phi}+\Phi^\top V\Phi= \frac{N}{4}\sum_{l=1}^{N-1}\lambda_l^2\tilde\phi_l^2+\sum_{j=1}^{N-1}V(x_j) \phi_j^2.
\ee
Again, the original problem (\ref{prob-min}) with $d=1$
can be approximated by the discretized minimization problem via the SP discretization:
\be\label{prob-min-discrete-SPp}
\Phi_g = {\rm arg\;min}_{\Phi\in S_h} E_h(\Phi).
\ee

Noticing (\ref{energy-discrete-SP-34}), we have
\be\label{Gh1dsin}
G_h := \nabla E_h(\Phi)=2h \left(B\Phi+ \beta \Phi^3\right)=
2h\left(\frac{1}{N}C\Lambda C\Phi+ \mathrm{V}\Phi + \beta \Phi^3\ \right).
\ee


\subsection{Fourier pseudospectral discretization}

For a rotating BEC, i.e. $\Omega\ne0$, due to the appearance of the
angular momentum rotation, we usually truncate
the energy functional (\ref{energy}) and constraint (\ref{normalization})
on a bounded computational domain $U$ with periodic boundary conditions
and approximate spatial derivatives via
the Fourier pseudospectral  (FP) method and the definite integrals
via the composite trapezoidal quadrature. For simplicity of notation, we
only introduce the FP discretization in 2D. Extensions to 3D are straightforward
and the details are omitted here for brevity.

 For $d=2$, we take $U=[a_1,b_1]\times[a_2,b_2]$ as a rectangle in 2D. Let
$h_1=\frac{b_1-a_1}{N_1}$ and $h_2=\frac{b_2-a_2}{N_2}$ be the spatial mesh sizes
with $N_1$ and $N_2$ two positive integers and denote $x_j=a_1+jh_1$ for $j=0,1,\ldots,N_1$, $y_k=a_2+kh_2$ for $k=0,1,\ldots,N_2$. Denote $h=\max\{h_1,h_2\}$ and $U_{jk}=(x_j,x_{j+1})\times (y_k,y_{k+1})$.
Let $\phi_{jk}$ be the numerical approximation of $\phi(x_j,y_k)$ for $j=0,1,\ldots,N_1$ and $k=0,1,\ldots,N_2$
satisfying $\phi_{jN_2}=\phi_{j0}$ for $j=0,1,\ldots,N_1$ and $\phi_{N_1k}=\phi_{0k}$ for $k=0,1,\ldots,N_2$
and denote $\Phi=(\phi_{jk})\in {\mathbb C}^{N_1\times N_2}$.
The energy functional (\ref{energy})  with $d=2$ can be truncated and discretized as
\bea
E(\phi)&\approx&\int_{a_1}^{b_1}\int_{a_2}^{b_2}\left[ -\frac12\bar\phi\Delta\phi + V(x,y)|\phi|^2 + \frac{\beta}{2}|\phi|^4
+  i\Omega\bar\phi(x\partial_y-y\partial_x)\phi  \right] dxdy \nonumber\\
&=&\sum_{j=0}^{N_1-1}\sum_{k=0}^{N_2-1}\int_{U_{jk}}
\left[ -\frac12\bar\phi\Delta\phi + V(x,y)|\phi|^2 + \frac{\beta}{2}|\phi|^4
+  i\Omega\bar\phi(x\partial_y-y\partial_x)\phi  \right] dxdy \nonumber\\
\label{energy-discrete-FP-1}
&\approx& h_1h_2 \sum_{j=0}^{N_1}\sum_{k=0}^{N_2} \left[
-\bar\phi_{jk}\left(\frac12\left.\partial^f_{xx}\phi\right|_{jk}+
\frac12\left.\partial^f_{yy}\phi\right|_{jk}+i\Omega y_k\left.\partial^f_{x}\phi\right|_{jk}- i\Omega x_j\left.\partial^f_{y}\phi\right|_{jk}\right)
\right.\nonumber \\
&& \left. + V(x_j, y_k)|\phi_{jk}|^2 + \frac{\beta}{2}|\phi_{jk}|^4  \right]\alpha_{jk}:=E_h(\Phi),
\eea
where
\[\alpha_{jk}=\left\{\begin{array}{ll}
1 &1\le j\le N_1-1, \ 1\le k\le N_2-1,\\
1/4 &j=0\&k=0,N_2\ \hbox{or} \ j=N_1\&k=0,N_2,\\
1/2 &\hbox{otherwise}, \\
\end{array}\right.
\]
and the Fourier pseudospectral differential operators are given as
\be\label{fftp35}
\begin{split}
\left.\partial^f_x\phi\right|_{jk} =
\sum_{p=-N_1/2}^{N_1/2-1}i\lambda_p\tilde\phi^{(1)}_{pk} e^{i\frac{2\pi jp}{N_1}},\quad
\left.\partial^f_{xx}\phi\right|_{jk} = -
\sum_{p=-N_1/2}^{N_1/2-1}\lambda_p^2\tilde\phi^{(1)}_{pk} e^{i\frac{2\pi jp}{N_1}},\\
\left.\partial^f_y\phi\right|_{jk} =
\sum_{q=-N_2/2}^{N_2/2-1}i\eta_q\tilde\phi^{(2)}_{jq} e^{i\frac{2\pi kq}{N_2}},\quad
\left.\partial^f_{yy}\phi\right|_{jk} = -
\sum_{q=-N_2/2}^{N_2/2-1}\eta_q^2\tilde\phi^{(2)}_{jq} e^{i\frac{2\pi kq}{N_2}},
\end{split}
\ee
with
\be\label{fftp36}
\begin{split}
&\tilde\phi^{(1)}_{pk} = \frac{1}{N_1}\sum_{j=0}^{N_1-1}\phi_{jk}e^{-i\frac{2\pi jp}{N_1}},
\qquad\lambda_p = \frac{2\pi p}{b_1-a_1},\qquad p=-\frac{N_1}{2},\ldots,\frac{N_1}{2}-1,\\
&\tilde{\phi}^{(2)}_{jq} =\frac{1}{N_2}\sum_{k=0}^{N_2-1}\phi_{jk}e^{-i\frac{2\pi kq}{N_2}}, \qquad
\eta_q =\frac{2\pi q}{b_2-a_2},\qquad q=-\frac{N_2}{2},\ldots,\frac{N_2}{2}-1.
\end{split}
\ee
Plugging (\ref{fftp35}) and (\ref{fftp36}) into (\ref{energy-discrete-FP-1}),
the discretized energy functional $E_h(\Phi)$ can be computed efficiently
 via the fast Fourier transform (FFT) as
\bea
\label{energy-discrete-FP}
E_h(\Phi)&=&h_1h_2\left[ \sum_{k=0}^{N_2}\alpha_{1k}N_1
\sum_{p=-N_1/2}^{N_1/2-1}\left(\frac{\lambda_p^2}{2}+y_k\lambda_p\Omega \right)|\tilde\phi^{(1)}_{pk}|^2
 \right. \nonumber \\
&& \qquad +\left. \sum_{j=0}^{N_1}\alpha_{j1}N_2
\sum_{q=-N_2/2}^{N_2/2-1}\left(\frac{\eta_q^2}{2}-x_j\eta_q\Omega \right)|\tilde{\phi}^{(2)}_{jq}|^2
 \right] \nonumber \\
&&+h_1h_2\sum_{j=0}^{N_1}\sum_{k=0}^{N_2}\alpha_{jk}\left[V(x_j, y_k)|\phi_{jk}|^2 + \frac{\beta}{2}|\phi_{jk}|^4\right].
\eea
Similarly, the constraint (\ref{normalization}) with $d=2$ can be truncated and discretized as
\be
\|\phi\|^2\approx \int_{a_1}^{b_1}\int_{a_2}^{b_2}|\phi(x,y)|^2dxdy
\approx h_1h_2 \sum_{j=0}^{N_1-1}\sum_{k=0}^{N_2-1} |\phi_{jk}|^2:=\|\Phi\|_h^2=  1,
\ee
which immediately implies that the set $S$ can be discretized as
\be
S_h = \left\{ \Phi\in\C^{N_1\times N_2} \ \left|\right.\  E_h(\Phi)<\infty, \; \|\Phi\|_h^2 = 1  \right\}.
\ee
Hence, the original problem (\ref{prob-min}) with $d=2$
can be approximated by the discretized minimization problem via the FP discretization:
\be\label{prob-min-discrete-FP5}
\Phi_g = {\rm arg\;min}_{\Phi\in S_h} E_h(\Phi).
\ee
Noticing (\ref{energy-discrete-FP}), similarly to (\ref{Gh1dsin}),
 $G_h = \nabla E_h(\Phi)$ can be computed efficiently
via FFT in a similar manner with the details omitted here for brevity.

\section{A regularized Newton method by trust-region type techniques}\label{optimization}

It is easy to see that the constrained minimization problems
\eqref{prob-min-discrete-FD}, \eqref{prob-min-discrete-SPp} and
\eqref{prob-min-discrete-FP5} can be written in a unified way via a proper rescaling
\be \label{eq:prob}
X_g:={\rm arg\;min}_{X \in S_M} \F(X):= \half X^* A X + \alpha
\sum_{j=1}^M |X_j|^4, \ee
where $M$ is a positive integer, $\alpha$ is a given real constant,
$A \in \C^{M\times M}$ is a Hermitian matrix and the spherical constraint is given as
\[S_M=\left\{ X=(X_1,X_2,\ldots,X_M)^\top\in \C^M\ |\ \|X\|_2^2:=\sum_{j=1}^M|X_j|^2=1\right\}.
\]

We first derive the optimality conditions of
the problem \eqref{eq:prob}. The gradient and Hessian of $\F(X)$ can be written
 explicitly.
\begin{lemma} \label{eq:grad-Hess-BEC} The first and second-order directional
   derivatives of $\F(X)$ along a direction $D \in \C^M$ are:
   \bea \nabla \F(X)[D] &=&  \Re(D^*AX) + 4 \alpha \sum_{j=1}^M (\bar{X}_j X_j) \Re(\bar{X}_j
   D_j), \\
   \nabla^2 \F(X)[D,D]&=& D^*AD + 4\alpha \sum_{j=1}^M \left[ (\bar{X}_j X_j) (\bar{D}_j
   D_j) + 2 \Re(\bar{X}_j D_j)^2 \right].
   \eea
   \end{lemma}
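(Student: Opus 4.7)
The plan is to reduce everything to single-variable calculus in the parameter $t$ by computing $\F(X+tD)$ explicitly and reading off $\frac{d}{dt}\big|_{t=0}$ and $\frac{d^2}{dt^2}\big|_{t=0}$. I would split $\F = \F_1 + \F_2$, where $\F_1(X) = \tfrac{1}{2} X^* A X$ and $\F_2(X) = \alpha\sum_{j=1}^M |X_j|^4$, and treat the two pieces separately before adding.

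For $\F_1$, expand
\[
(X+tD)^*A(X+tD) = X^*AX + t\bigl(D^*AX + X^*AD\bigr) + t^2 D^*AD.
\]
Since $A = A^*$, one has $X^*AD = \overline{D^*AX}$, so $D^*AX + X^*AD = 2\Re(D^*AX)$. Multiplying by $\tfrac{1}{2}$, the coefficients of $t$ and $t^2$ give the contributions $\Re(D^*AX)$ and $D^*AD$ to the first and second directional derivatives, respectively.

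For $\F_2$, introduce the auxiliary scalar functions
\[
g_j(t) := |X_j + tD_j|^2 = |X_j|^2 + 2t\,\Re(\bar{X}_jD_j) + t^2 |D_j|^2,
\]
so that $\F_2(X+tD) = \alpha\sum_j g_j(t)^2$. Note that $g_j(0) = \bar{X}_j X_j$, $g_j'(0) = 2\Re(\bar{X}_j D_j)$, and $g_j''(t) \equiv 2|D_j|^2$. The chain rule yields $\tfrac{d}{dt}\bigl|_{t=0} g_j^2 = 2g_j(0)g_j'(0) = 4(\bar{X}_jX_j)\Re(\bar{X}_j D_j)$, and Leibniz's rule gives $\tfrac{d^2}{dt^2}\bigl|_{t=0} g_j^2 = 2\bigl(g_j'(0)\bigr)^2 + 2 g_j(0) g_j''(0) = 8\Re(\bar{X}_jD_j)^2 + 4|X_j|^2 |D_j|^2$. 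Summing over $j$ and multiplying by $\alpha$ produces exactly the $\F_2$-contribution claimed in the lemma after factoring out $4\alpha$.

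The computation is essentially routine and there is no real obstacle; the only step that requires a moment of attention is the quadratic piece, where the Hermitian property of $A$ must be invoked to collapse $D^*AX + X^*AD$ into $2\Re(D^*AX)$. Everything else is an application of the real-variable Leibniz rule to the $t$-parameterization.
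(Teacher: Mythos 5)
Your computation is correct: both pieces check out, the Hermitian symmetry of $A$ is used exactly where needed, and the $t$-expansion of $|X_j+tD_j|^2$ followed by the Leibniz rule reproduces the stated coefficients $4\alpha(\bar X_jX_j)\Re(\bar X_jD_j)$ and $4\alpha\left[(\bar X_jX_j)(\bar D_jD_j)+2\Re(\bar X_jD_j)^2\right]$ precisely. The paper states this lemma without proof, so there is nothing to compare against; your argument is the standard direct verification one would expect to fill that gap.
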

Define the Lagrangian function of \eqref{eq:prob} as
\be\label{lagrangian}
L(X, \theta) = \F(X) - \frac{\theta}{2}(\|X\|_2^2-1),
\ee
then the first-order optimality conditions of \eqref{eq:prob} are
\bea\label{optim1}
G - \theta X = 0,\\
\|X\|_2 = 1,\label{optim2}
\eea
where $G = \nabla \F(X)$ is the gradient of $\F(X)$.
Multiplying both sides of \eqref{optim1} by $X^*$ and using \eqref{optim2},
we have $\theta=X^* G$. Therefore, \eqref{optim1} becomes
\be\label{optim-cond}
(I-XX^*) G = \mathcal{A}(X)X = 0,  \qquad \hbox{with} \qquad
\mathcal{A}(X) = GX^* - XG^*.
\ee
By definition, $\mathcal{A}(X)$ is skew-symmetric at every $X$.

By differentiating both sides of $X^* X= 1$, we obtain the tangent vector set of the constraints:
\be \label{eq:tangent-St} \T_X :=\{ Z \in \C^M: X^* Z  = 0 \}.  \ee
The second-order optimality conditions is described as follows.
\begin{lemma} \label{lemma:sec-opt}
 1) (Second-order necessary conditions, Theorem 12.5 in \cite{NocedalWright06})
 Suppose that $X \in \C^M$ is a local minimizer of the problem
 \eqref{eq:prob}. Then $X$ satisfies
 \be \label{eq:sec-opt} \nabla^2 \F(X)[D,D] - \theta
 D^* D \ge 0,\quad \forall D \in \T_X,\quad \mbox{where } \theta = \nabla
 \F(X)^* X. \ee

2) (Second-order sufficient conditions, Theorem 12.6 in \cite{NocedalWright06}) Suppose that for   $X \in
\C^M$, there exists a Lagrange multiplier $\theta$ such that the first--order conditions are satisfied. Suppose also    that
 \be \label{eq:suff-sec-opt}\nabla^2 \F(X)[D,D] - \theta
 D^* D  > 0, \ee
  for any vector $D \in\T_X$. Then $X$ is a
  strict local minimizer for \eqref{eq:prob}.
\end{lemma}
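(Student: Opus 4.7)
The plan is to reduce this to the classical second-order necessary and sufficient conditions for smooth equality-constrained optimization in real variables, exactly the theorems cited from Nocedal--Wright. I identify $\C^M$ with $\R^{2M}$ via $X\mapsto(\Re X,\Im X)$, so that $\F$ becomes a $C^2$ real-valued function of $2M$ real variables and the unit sphere $S_M$ is defined by the single smooth real equality constraint $c(X):=\|X\|_2^2-1=0$. The Lagrange multiplier for this real constraint is the real scalar $\theta$ already introduced in \eqref{lagrangian}.

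First I would verify the linear independence constraint qualification (LICQ): the real gradient of $c$ at any $X\in S_M$ has Euclidean norm $2$ and so never vanishes, which puts us inside the hypotheses of both cited theorems. Next I would compute $\nabla^2_{XX} L(X,\theta)[D,D]=\nabla^2\F(X)[D,D]-\theta D^*D$ directly from \eqref{lagrangian}, using Lemma~\ref{eq:grad-Hess-BEC} for the $\nabla^2 \F(X)[D,D]$ term; note that $D^*D=\|D\|_2^2$ is the real quadratic form corresponding to $\|D\|^2$ under the real identification. For part 1, at any local minimizer $X$, the first-order conditions \eqref{optim1}--\eqref{optim2} together with $\|X\|_2=1$ give $\theta=\nabla\F(X)^*X$ as already derived in the text, and Theorem 12.5 of Nocedal--Wright then yields $\nabla^2\F(X)[D,D]-\theta D^*D\ge 0$ for every $D$ in the real tangent space to $S_M$ at $X$, which is $\{D:\Re(X^*D)=0\}$. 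Since $\T_X=\{D:X^*D=0\}$ as defined in \eqref{eq:tangent-St} is a subset of this real tangent space, the inequality holds in particular on $\T_X$. Part 2 is symmetric: assuming strict positivity and the first-order conditions, Theorem 12.6 provides the strict local minimizer conclusion.

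The main obstacle I expect is the mismatch in part 2 between the real tangent space $\{D:\Re(X^*D)=0\}$ to $S_M\subset\R^{2M}$ and the smaller set $\T_X=\{D:X^*D=0\}$ used in the statement. The missing direction is exactly $iX$, which is the infinitesimal generator of the $U(1)$ gauge symmetry $X\mapsto e^{i\alpha}X$; because $\F$ is invariant under this action, the Hessian automatically vanishes along $iX$, so strict positivity on the full real tangent space is impossible and one can only hope for strict positivity on a complementary slice, which is exactly $\T_X$. To close the argument I would apply Theorem 12.6 on a local slice of $S_M$ transverse to the $U(1)$-orbit through $X$ (e.g.\ by fixing the phase so that $X^*Y\in\R$ for nearby $Y$); on this slice LICQ and strict positive definiteness of the reduced Lagrangian Hessian both hold, yielding a strict local minimizer of the restricted problem. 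Gauge invariance of $\F$ then propagates this to a strict local minimizer statement in the sense used by the authors (strict minimizer modulo the phase orbit, which is the only sense in which strict minimality can ever hold here).
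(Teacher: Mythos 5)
The paper offers no proof of this lemma beyond the citations embedded in its statement, so the baseline you are being compared against is simply ``identify the problem with a real equality-constrained program and invoke Theorems 12.5 and 12.6 of Nocedal--Wright.'' Your reduction ($\C^M\cong\R^{2M}$, the single real constraint $c(X)=\|X\|_2^2-1$, LICQ from $\|\nabla c\|=2\neq 0$, and the identification of $\nabla^2\F(X)[D,D]-\theta D^*D$ as the Lagrangian Hessian of \eqref{lagrangian}) is exactly that intended route, and your treatment of part 1 is clean: the true real tangent space $\{D:\Re(X^*D)=0\}$ contains $\T_X$ of \eqref{eq:tangent-St}, so nonnegativity on the former restricts to the latter. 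Where you go beyond the paper is in part 2, and your observation there is substantively correct: since $\F$ and the constraint are invariant under $X\mapsto e^{i\alpha}X$, the Lagrangian Hessian form vanishes at $D=iX$ (indeed $\nabla^2\F(X)[iX,iX]=\theta=\theta\,(iX)^*(iX)$ once the first-order conditions hold), $iX$ lies in the real tangent space but not in $\T_X$, and therefore \eqref{eq:suff-sec-opt} can never hold on the full real tangent space and $X$ can never be a strict local minimizer in the literal sense. A naive application of Theorem 12.6 thus does not deliver part 2 as written; your slice argument (imposing $\Im(X^*Y)=0$ as a second constraint transverse to the $U(1)$ orbit, on which the tangent space is exactly $\T_X$ and LICQ still holds) is the correct repair, and it yields strict local minimality modulo the phase orbit --- which is the only sense in which the conclusion can be true, and evidently the sense the authors intend by restricting the hypothesis to $\T_X$. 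In short: your proof is correct, follows the paper's (implicit) strategy, and in part 2 supplies a necessary argument that the paper's bare citation glosses over.
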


\subsection{Construct initial solutions using feasible gradient type methods}
\label{sec:grad-OptM}
In this subsection, we consider to solve  the problem
\eqref{eq:prob} by following the
feasible method proposed in \cite{OptM-Wen-Yin-2010}.
 The description of the algorithm is included to keep the exposition as
 self-contained as possible. Observe that $\mathcal{A}(X)X$ is the gradient of
 $\F(X)$ at $X$ projected to the tangent space of the constraints. The steepest
 descent path is $\hat Y(\tau):=X-\tau \mathcal{A}(X) X$, where $\tau$ is a positive constant
 representing the step size.  However, this $\hat Y(\tau)$ does not generally have a unit norm.

An alternative implicit updating path is
\be\label{eq-Y}
Y(\tau):= X - \tau \mathcal{A}(X) (X + Y(\tau)) \Longleftrightarrow
Y(\tau) = \left(I + \tau\mathcal{A}(X)\right)^{-1} \left(I -
\tau\mathcal{A}(X)\right) X.
\ee
Then the fact that $\left(I + \tau \mathcal{A}(X)\right)^{-1} \left(I - \tau
\mathcal{A}(X) \right)$ is orthogonal for any $\tau \ge 0$ gives $\|Y(\tau)\|_2 = \|X\|_2 = 1$,
 i.e., the constraints are preserved at every $\tau$. 
 The closed-form solution of $Y(\tau)$ can be computed explicitly as a linear
combination of $X$ and $G$, in which the linear coefficients are determined by $\tau$, $\|X\|_2 $,  $\|G\|_2$ and $X^* G$.

\begin{theorem}\label{thm-Y}
For every $\tau\ge0$,  $Y(\tau)$ of \eqref{eq-Y}
satisfies $\|Y(\tau)\|_2=\|X\|_2$. In addition, $Y(\tau)$ is given in the closed-form
as
\be
Y(\tau)= \alpha(\tau) X + \beta (\tau) G,
\ee
where
\[
\alpha(\tau) = \frac{ \left( 1 +  \tau X^* G \right)^2
- \tau^2 \|X\|_2^2 \|G\|_2^2 }
{ 1- \tau^2 (X^* G)^2
+ \tau^2 \|X\|_2^2 \|G\|_2^2 }, \quad
\beta(\tau)  =   \frac{ - 2\tau  \|X\|_2^2 }
{ 1- \tau^2 (X^* G)^2
+ \tau^2 \|X\|_2^2 \|G\|_2^2 }.
\]
\end{theorem}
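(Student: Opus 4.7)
\textbf{Proof plan for Theorem \ref{thm-Y}.}
I would split the theorem into the norm preservation and the closed form, which rest on two independent structural facts about $\mathcal{A}(X) = GX^* - XG^*$: it is skew-Hermitian, and it has rank at most two with range contained in $\mathrm{span}\{X,G\}$.

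For the norm identity, I would rewrite \eqref{eq-Y} as the explicit Cayley-type relation $(I+\tau\mathcal{A}(X))Y(\tau) = (I-\tau\mathcal{A}(X))X$ and set $Q(\tau) := (I+\tau\mathcal{A}(X))^{-1}(I-\tau\mathcal{A}(X))$. Since $\mathcal{A}(X)^* = XG^* - GX^* = -\mathcal{A}(X)$, we have $(I-\tau\mathcal{A}(X))^* = I+\tau\mathcal{A}(X)$, and the two factors $(I\pm\tau\mathcal{A}(X))$ (and their inverses) commute as polynomials in $\mathcal{A}(X)$. A one-line computation then gives $Q(\tau)^* Q(\tau) = I$, so $\|Y(\tau)\|_2 = \|Q(\tau)X\|_2 = \|X\|_2$. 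Invertibility of $I+\tau\mathcal{A}(X)$ for $\tau\ge 0$ follows because the spectrum of a skew-Hermitian matrix lies on the imaginary axis, so $-1/\tau$ is not an eigenvalue.

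For the closed form, the key observation is that for any $v\in\mathbb{C}^M$, $\mathcal{A}(X)v = (X^*v)G - (G^*v)X \in \mathrm{span}\{X,G\}$. Hence every iterate of the fixed-point equation \eqref{eq-Y} stays in $\mathrm{span}\{X,G\}$, and I may safely make the ansatz $Y(\tau) = \alpha(\tau) X + \beta(\tau) G$. Substituting this ansatz into \eqref{eq-Y} and using the two identities $\mathcal{A}(X)X = \|X\|_2^2\, G - (G^*X)\,X$ and $\mathcal{A}(X)G = (X^*G)\,G - \|G\|_2^2\,X$, I would match coefficients of $X$ and $G$ to obtain a $2\times 2$ linear system for $(\alpha,\beta)$. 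Cramer's rule then yields the stated expressions, with determinant $1 - \tau^2 (X^*G)^2 + \tau^2\|X\|_2^2\|G\|_2^2$, which is strictly positive by Cauchy-Schwarz and therefore guarantees solvability.

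One subtle point deserves a remark: the stated formulas treat $X^*G$ as a real number (they feature $(X^*G)^2$, not $|X^*G|^2$). This is legitimate in the present context because Lemma \ref{eq:grad-Hess-BEC} identifies $G = AX + 4\alpha\,(\bar X \odot X \odot X)$, so $X^*G = X^*AX + 4\alpha\sum_j |X_j|^4 \in \mathbb{R}$ since $A$ is Hermitian; consequently $G^*X = \overline{X^*G} = X^*G$, which is exactly what makes the two rows of the $2\times 2$ system symmetric and collapses the Cramer expressions into the advertised form. I expect the only real obstacle to be bookkeeping in the substitution step; identifying the reality of $X^*G$ and the invariance of $\mathrm{span}\{X,G\}$ does all the conceptual work, and the norm identity is immediate from the skew-Hermitian/Cayley structure.
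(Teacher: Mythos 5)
Your proof is correct and complete. Note that the paper itself gives no proof of Theorem \ref{thm-Y}; it simply defers to \cite{OptM-Wen-Yin-2010}, where the analogous real-matrix result is obtained by writing $\mathcal{A}(X)=UV^*$ with $U=[G,\,X]$, $V=[X,\,-G]$ and applying the Sherman--Morrison--Woodbury formula to $(I+\tau UV^*)^{-1}$, which likewise reduces everything to a $2\times 2$ system. Your route --- observing that the range of $\mathcal{A}(X)$ lies in $\mathrm{span}\{X,G\}$, making the ansatz $Y(\tau)=\alpha(\tau)X+\beta(\tau)G$ directly in the fixed-point form of \eqref{eq-Y}, and solving the resulting $2\times 2$ system by Cramer's rule --- is an equivalent but slightly more elementary packaging of the same reduction, and your Cayley-transform argument for $\|Y(\tau)\|_2=\|X\|_2$ (skew-Hermitian $\mathcal{A}(X)$, commuting factors, purely imaginary spectrum guaranteeing invertibility of $I+\tau\mathcal{A}(X)$) is exactly the standard one. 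The most valuable part of your write-up is the remark that the stated coefficients contain $(X^*G)^2$ rather than $|X^*G|^2$, which is only legitimate because $X^*G=X^*AX+4\alpha\sum_j|X_j|^4\in\mathbb{R}$ for the specific gradient of Lemma \ref{eq:grad-Hess-BEC}; since the cited reference treats real matrices, this reality check is precisely the point that must be verified when transplanting the formula to the complex setting of \eqref{eq:prob}, and your proof supplies it where the paper's citation does not.
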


We refer to~\cite{OptM-Wen-Yin-2010} for the details of the proof of this theorem.

A suitable step size $\tau$ can be chosen by using a nonmonotone
curvilinear (as our search path is on the manifold rather
than a straight line) search with an initial step size determined by the
Barzilai-Borwein (BB) formula \cite{BarzilaiBorwein1988}.
They were developed originally for the vector case in \cite{BarzilaiBorwein1988}.
At iteration $k$, the step size is computed as
 \be  \label{eq:bb-1} \tau^{k,1} = \frac{\tr\left((S^{(k-1)})^{*}S^{(k-1)}\right)}
    {|\tr\big((S^{(k-1)})^{*}  W^{(k-1)} \big)|} \quad \mbox{ or  } \quad
    \tau^{k,2} = \frac{|\tr\left((S^{(k-1)})^{*} W^{(k-1)}
    \right)|}{ \tr \big((W^{(k-1)})^{*} W^{(k-1)} \big)},\ee
where
$ S^{(k-1)} = X^{(k)} - X^{(k-1)}$ and $W^{(k-1)} = \mathcal{A}(X^{(k)})X^{(k)}
-\mathcal{A}(X^{(k-1)})X^{(k-1)}$. When  $\tau^{k,1}$ or $\tau^{k,2}$ is not bounded, they
are reset to a finite number.

In order to guarantee convergence, the final value for $\tau^{(k)}$ is a fraction
 of $\tau^{k,1}$ or $\tau^{k,2}$ determined by a
nonmonotone search condition. Let $Y(\tau)$ be defined by  \eqref{eq-Y}, $C^{(0)}=\F(X^{(0)})$, $ Q^{(k+1)} = \eta Q^{(k)} +1$ and  $Q^{(0)}=1$. The new points
 are generated iteratively in the form  $X^{(k+1)}:=Y^{(k)}(\tau^{(k)})$ with
  $\tau^{(k)} =\half \tau^{k,1} \delta^m$ or $\tau^{(k)} =\half \tau^{k,2} \delta^m$.
  Here $m$ is the smallest nonnegative integer satisfying
 \be \label{eq:NMLS-Armijo}
 \F(Y^{(k)}(\tau^{(k)})) \le C^{(k)} - \rho_1  \tau^{(k)} \|\mathcal{A}(X^{(k)})X^{(k)}\|_2^2, \ee
 where each reference
 value $C^{(k+1)}$   is taken to be the convex combination of  $C^{(k)}$ and
 $\F(X^{(k+1)})$ as $C^{(k+1)} = (\eta Q^{(k)} C^{(k)} +
            \F(X^{(k+1)}))/Q^{(k+1)}$.
In Algorithm \ref{alg:ConOptM} below, we specify our method for
solving the constrained minimization problem (\ref{eq:prob}) obtained
from the discretization of the ground state of BEC.
Although several backtracking steps may be needed to update the
$X^{(k+1)}$, we observe that the BB step size  $\tau^{k,1}$ or $\tau^{k,2}$ is
often sufficient for \eqref{eq:NMLS-Armijo} to hold in most of our numerical
experiments.

\begin{algorithm2e}[h]\caption{A feasible gradient method}
\label{alg:ConOptM}Given $X^{(0)}$, set $ \rho_1, \eta \in (0,1)$, $k=0$.\\
\While{stopping conditions are not met}{
Compute $\tau^{(k)} \gets \half \tau^{k,1} \delta^m$ or $\tau^{(k)} \gets \half \tau^{k,2} \delta^m$, where
$m$ is the smallest nonnegative integer satisfying the condition \eqref{eq:NMLS-Armijo}.\\
Set $X^{(k+1)}\gets Y(\tau)$. \\
{$Q^{(k+1)}\gets\eta Q^{(k)} +1$ and $C^{(k+1)}\gets(\eta Q^{(k)} C^{(k)}
+ \F(X^{(k+1)}))/Q^{(k+1)}$}.\\
  $k\gets k+1$.
}
\end{algorithm2e}

We can establish the
  convergence of Algorithm \ref{alg:ConOptM} as follows.
\begin{theorem}\label{theo:convergence}
  Let $\{X^{(k)} : k\geq 0\}$ be an infinite sequence generated by the Algorithm
  \ref{alg:ConOptM}. Then either $\|\mathcal{A}(X^{(k)})X^{(k)}\|_2=0$ for some
  finite $k$ or
 \[
\liminf_{k \to \infty}\|\mathcal{A}(X^{(k)})X^{(k)}\|_2 = 0.
\]
\end{theorem}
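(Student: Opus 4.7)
The plan is to follow the standard Zhang--Hager nonmonotone line search analysis, adapted to the curvilinear search along the path $Y(\tau)$ on the unit sphere, in the spirit of the convergence proof of \cite{OptM-Wen-Yin-2010}. The argument will consist of four steps: well-posedness of the line search, monotonicity of the reference values $C^{(k)}$, a summability estimate, and a contradiction argument.

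First, I would verify that the backtracking line search \eqref{eq:NMLS-Armijo} terminates in finitely many steps at every iteration for which $\mathcal{A}(X^{(k)})X^{(k)} \neq 0$. Differentiating the closed-form expression from Theorem \ref{thm-Y}, one computes $Y'(0) = -\mathcal{A}(X)X$, so that
\begin{equation*}
\left.\dxf{\tau}\mathcal{F}(Y(\tau))\right|_{\tau=0}
= \Re\bigl(G^{*}Y'(0)\bigr)
= -\Re\bigl(G^{*}(I - XX^{*})G\bigr)
= -\|\mathcal{A}(X)X\|_2^2,
\end{equation*}
using $\|X\|_2=1$. Since $\mathcal{F}$ is twice continuously differentiable and the curve $Y(\tau)$ lies on the compact sphere, a Taylor expansion gives $\mathcal{F}(Y(\tau)) \le \mathcal{F}(X) - \tau\|\mathcal{A}(X)X\|_2^2 + O(\tau^2)$, so for any $\rho_1 \in (0,1)$ the inequality $\mathcal{F}(Y(\tau)) \le \mathcal{F}(X) - \rho_1\tau\|\mathcal{A}(X)X\|_2^2 \le C^{(k)} - \rho_1\tau\|\mathcal{A}(X)X\|_2^2$ holds for all sufficiently small $\tau>0$ (using $\mathcal{F}(X^{(k)})\le C^{(k)}$, proved in the next step).

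Next, I would establish the Zhang--Hager inequalities: $\mathcal{F}(X^{(k)}) \le C^{(k)}$ for all $k$, the sequence $\{C^{(k)}\}$ is non-increasing, and $C^{(k)} \ge \inf_{X\in S_M}\mathcal{F}(X) > -\infty$ (boundedness follows because $\mathcal{F}$ is continuous on the compact sphere $S_M$). The proof is purely algebraic from the recurrence $C^{(k+1)} = (\eta Q^{(k)}C^{(k)} + \mathcal{F}(X^{(k+1)}))/Q^{(k+1)}$ combined with \eqref{eq:NMLS-Armijo}, exactly as in \cite{OptM-Wen-Yin-2010}. Summing the decrease $C^{(k)} - C^{(k+1)} \ge \rho_1 \tau^{(k)}\|\mathcal{A}(X^{(k)})X^{(k)}\|_2^2 / Q^{(k+1)}$ and using $Q^{(k+1)} \le 1/(1-\eta)$ yields
\begin{equation*}
\sum_{k=0}^{\infty} \tau^{(k)}\|\mathcal{A}(X^{(k)})X^{(k)}\|_2^2 < \infty.
\end{equation*}

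Finally, I would derive the conclusion by contradiction: suppose that there is no finite $k$ with $\mathcal{A}(X^{(k)})X^{(k)} = 0$ and that $\liminf_{k\to\infty}\|\mathcal{A}(X^{(k)})X^{(k)}\|_2 = \epsilon > 0$. The summability above then forces $\tau^{(k)} \to 0$ along an appropriate subsequence. The main obstacle, and the most delicate step, is showing this is incompatible with the backtracking rule: since $\tau^{(k)}$ is not the initial BB trial, some backtracking occurred, meaning the previous trial step $\tilde{\tau}^{(k)} = \tau^{(k)}/\delta$ failed \eqref{eq:NMLS-Armijo}. Expanding $\mathcal{F}(Y(\tilde\tau^{(k)}))$ via a second-order Taylor expansion along the curve on the sphere and using the uniform Lipschitz bound on $\nabla^2\mathcal{F}$ on the compact set $S_M$ yields $\tilde\tau^{(k)} \ge (1-\rho_1)\|\mathcal{A}(X^{(k)})X^{(k)}\|_2^2 / L$ for some constant $L$ independent of $k$. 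Combined with $\|\mathcal{A}(X^{(k)})X^{(k)}\|_2 \ge \epsilon$ infinitely often, this contradicts $\tau^{(k)}\to 0$, completing the proof.
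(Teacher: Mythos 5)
Your proposal is correct and follows exactly the standard nonmonotone (Zhang--Hager) curvilinear line-search analysis that the paper itself invokes: the paper's proof of Theorem~\ref{theo:convergence} is a one-line reference to \cite{jiang-dai}, and your four steps (well-posedness of the backtracking, the $\F(X^{(k)})\le C^{(k)}$ and monotonicity of $C^{(k)}$ inequalities, summability of $\tau^{(k)}\|\mathcal{A}(X^{(k)})X^{(k)}\|_2^2$, and the contradiction via a lower bound on the rejected step) are precisely the argument carried out there and in \cite{OptM-Wen-Yin-2010}. One minor slip worth noting: with the paper's parametrization \eqref{eq-Y} (which omits the factor $\tau/2$ used in \cite{OptM-Wen-Yin-2010}) one has $Y'(0)=-2\mathcal{A}(X)X$ rather than $-\mathcal{A}(X)X$, but since $\rho_1<1<2$ this only makes the sufficient-decrease condition \eqref{eq:NMLS-Armijo} easier to satisfy for small $\tau$ and does not affect the argument; similarly, your final step implicitly assumes the initial BB trial steps $\tau^{k,1},\tau^{k,2}$ are safeguarded away from zero, which is the standard convention.
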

\begin{proof}
Since the energy function $\F(X)$ is differentiable and its gradient $\nabla
\F(X)$ is Lipschitiz continuous, the results can be obtained using the proofs of
\cite{jiang-dai} in a similar fashion.
\end{proof}

\begin{remark}
The convergence of the full sequence $\{X^{(k)}\}$ can be ensured if a monotone
line search is used. Given $\hat \alpha>0, \rho_1, \delta \in (0,1)$, the Armijo point at $X^{(k)}$
is defined as  $Y^{(k)}(\tau^{(k)})$, where
 $Y(\tau)$ is the curve \eqref{eq-Y}, $\tau^{(k)} = \hat \alpha \delta^m$ and
  $m$ is the smallest nonnegative integer satisfying
 \be \label{eq:Armijo1}
 \F(Y^{(k)}(\tau^{(k)})) \le \F(X^{(k)}) - \rho_1  \tau^{(k)} \|\mathcal{A}(X^{(k)}) X^{(k)}\|_2^2. \ee
Using the proofs of Theorem 4.3.1 and Corollary 4.3.2 \cite{opt-manifold-book}
in a similar fashion, we can prove that 
 $ \lim_{k \to \infty}\|\mathcal{A}(X^{(k)})X^{(k)}\|_2 = 0$.
\end{remark}

\subsection{A regularized Newton method for computing ground states of BEC}

In general, the Algorithm \ref{alg:ConOptM} works well
in the case of weak interaction and slow rotation, i.e.
 $|\beta|$ and $|\Omega|$ are small in the energy functional \eqref{energy}.
However,  its convergence is often slowed down in the case of strong interaction
and/or fast rotation, i.e., when one of the parameters
becomes larger, and thus it can take a lot of iterations to
obtain a highly accurate solution. Usually, fast local convergence cannot be expected  if only the
gradient information is used, in particular, for difficult non-quadratic problems.
 Observe that the most difficult term in \eqref{eq:prob} is the quartic function
 $|X_i|^4$. A Newton method is to replace $\F(X)$ by its second-order Taylor
 expansion. In order to ensure the global convergence of the  Newton's method, we
 adopt the trust region method  \cite{ConnGouldTointTrustRegionBook,NocedalWright06,SunYuan06}
 by adding a proximal term $\|X-X^{(k)}\|^2_2$ in the
 surrogate function as:
\[
\tilde W^{(k)}(X) := \nabla \F(X^{(k)})[X-X^{(k)}] + \half
\nabla^2 \F(X^{(k)})[X-X^{(k)}, X - X^{(k)}] + \frac{\delta^{(k)}}{2} \|X-X^{(k)}\|_2^2,
\]
where   $\delta^{(k)} > 0$ is a regularization parameter.
Using Lemma \ref{eq:grad-Hess-BEC}, we obtain that
\[
\tilde W^{(k)}(X) =W^{(k)}(X)
+ \mbox{constant},
\]
where
\beaa\label{eq:mL}
W^{(k)}(X) &=&  \half X^*AX  +  4 \alpha \sum_{j=1}^N \left(
\bar{X}^{(k)}_j X^{(k)}_j\right) \Re\left( \bar{X}^{(k)}_j (X_j-X^{(k)}_j)\right)\\
&+& 2\alpha \sum_{j=1}^N \left[ \left( \bar{X}^{(k)}_j X^{(k)}_j + \delta^{(k)} \right)|X_j- X^{(k)}_j|^2 + 2
 \Re\left( \bar{X}^{(k)}_j (X_j- X^{(k)}_j)\right)^2  \right]. \nonumber
\eeaa
The gradient of $W^{(k)}(X)$ is
\[
(\nabla W^{(k)}(X))_j = (AX)_j  + 4 \alpha(
\bar{X}^{(k)}_j X^{(k)}_j) X_j + 8
 \alpha \Re( \bar{X}^{(k)}_j (X_j- X^{(k)}_j))X^{(k)}_j
+ \tau^{(k)}(X_j-X^{(k)}_j).
 \]

We next present the  regularized Newton  framework starting from a feasible
initial point $X^{(0)}$ and the regularization parameter $\delta^{(0)}$.
 At the $k$-th iteration, our regularized Newton subproblem is defined as
\be\label{eq:tr-sub}
\min_{\|X\|_2=1}  \quad W^{(k)}(X)
   \ee
  The subproblem \eqref{eq:tr-sub} is the so-called trust-region subproblem.
  Since the dimension $M$ in (\ref{eq:prob}) is usually very large so that
  the discretization error of (\ref{prob-min}) can be small, the standard
  algorithms for solving the trust-region subproblem
  \cite{ConnGouldTointTrustRegionBook,NocedalWright06,SunYuan06} usually cannot
  be applied to \eqref{eq:tr-sub} directly. Hence, we still use a
  gradient-type method similar to the one described in subsection
  \ref{sec:grad-OptM} to solve \eqref{eq:tr-sub}.
 The method is ideal for solving these  regularized Newton subproblems
  since  it is not necessary to solve these subproblems to a
high accuracy, especially, at the early stage of the  algorithm when a good starting
guess is not available.

 Let  $Z^{(k)}$ be an optimal solution of \eqref{eq:tr-sub}.
 Generally speaking, an algorithm cannot be guaranteed to converge globally if $X^{(k+1)}$ is set
 directly to the
 trial point $Z^{(k)}$ obtained from a model with a fixed $\delta^{(k)}$.
 In order to decide whether the trial point $Z^{(k)}$ should be accepted   and whether the
 regularization parameter should be updated or not, we calculate the ratio
 between the actual reduction of the objective function
   $\F(X)$ and predicted reduction:
  \be \label{alg:TR-ratio} \rho^{(k)} = \frac{ \F(Z^{(k)}) - \F(X^{(k)})  } {
 W^{(k)}(Z^{(k)})-W^{(k)}(X^{(k)}) }.  \ee
   If $\rho^{(k)} \ge \eta_1 > 0$, then the iteration is successful and we set
$X^{(k+1)}= Z^{(k)}$; otherwise, the iteration is not successful and we set $X^{(k+1)}=
X^{(k)}$, that is,
\be \label{eq:update-X}
X^{(k+1)} = \begin{cases} Z^{(k)}, & \mbox{ if } \rho^{(k)} \ge \eta_1, \\
  X^{(k)}, & \mbox{ otherwise}.
\end{cases}
\ee
Then the regularization parameter $\delta^{(k+1)}$ is updated as
\be \label{alg:tau-up} \delta^{(k+1)} \in \begin{cases} (0,\delta^{(k)}], & \mbox{  if }
  \rho^{(k)} > \eta_2, \\ [\delta^{(k)},\gamma_1 \delta^{(k)}], & \mbox{  if } \eta_1 \leq \rho^{(k)}
  \leq \eta_2, \\ [\gamma_1 \delta^{(k)}, \gamma_2 \delta^{(k)}], & \mbox{  otherwise}. \end{cases} \ee
 where $0<\eta _1 \le \eta _2 <1 $ and $1<  \gamma _1 \le \gamma _2 $.
 These parameters  determine how aggressively the
 regularization parameter is decreased when an iteration is successful or it is
 increased when an iteration is unsuccessful.   In practice, the performance of
 the   regularized Newton algorithm is not very sensitive to the  values of the parameters.

The complete  regularized Newton  algorithm to solve  \eqref{eq:prob}  is summarized in
the Algorithm~\ref{alg:TR}.

\begin{algorithm2e}[t]
\caption{A regularized Newton method} 
\label{alg:TR}
Given a feasible initial solution $X^{(0)}$ with
$\|X^{(0)}\|_2=1$ and initial regularization parameter $\tau^{(0)}>0$.
 Choose $0<\eta _1 \le \eta _2 <1 $, $1<  \gamma _1 \le \gamma _2 $. \\
 Call Algorithm \ref{alg:ConOptM} to minimize problem \eqref{eq:prob} to a certain
low accuracy for a feasible solution $X^{(1)}$.  Set iteration $k:=1$.\\
\While{stopping conditions are not met}
{
Solve \eqref{eq:tr-sub} to obtain a new trial point $Z^{(k)}$ . \\
Compute the ratio $\rho^{(k)}$ via \eqref{alg:TR-ratio}. \\
Update $X^{(k+1)}$ from the trial point $Z^{(k)}$ based on \eqref{eq:update-X}. \\
Update $\delta^{(k)}$ according to
\eqref{alg:tau-up}. \\
  $k\gets k+1$.
}
\end{algorithm2e}

 The convergence of the Algorithm
  \ref{alg:TR} can also be
established as follows.
\begin{theorem}   Let $\{X^{(k)} : k\geq 0\}$ be an infinite sequence generated by the Algorithm
  \ref{alg:TR}.  Then either $\|\mathcal{A}(X^{(k)})X^{(k)}\|_2=0$ for some
  finite $k$ or
\[ \lim_{k \to \infty} \|\mathcal{A}(X^{(k)})X^{(k)}\|_2 = 0. \]
\end{theorem}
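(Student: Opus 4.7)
The plan is to adapt the standard trust-region convergence theory to the spherical manifold setting. My strategy has three main components: (i) establish a Cauchy-style model-decrease bound for the subproblem solver; (ii) show that $\delta^{(k)}$ stays bounded on iterates where the projected gradient is bounded away from zero; and (iii) combine these to force $\|\mathcal{A}(X^{(k)})X^{(k)}\|_2 \to 0$.

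For step (i), I would exploit the fact that Algorithm \ref{alg:ConOptM} applied to the subproblem \eqref{eq:tr-sub} is started at $X^{(k)}$, whose projected gradient relative to $W^{(k)}$ coincides (up to the Lagrange-multiplier shift) with $\mathcal{A}(X^{(k)})X^{(k)}$. Since the Hessian of $W^{(k)}$ along the closed-form curve \eqref{eq-Y} is controlled by a constant plus $\delta^{(k)}$, a single backtracking step of Algorithm \ref{alg:ConOptM} on the subproblem already delivers a Cauchy-type decrease of the form
\begin{equation*}
W^{(k)}(X^{(k)}) - W^{(k)}(Z^{(k)}) \;\ge\; \kappa_1\, \|\mathcal{A}(X^{(k)})X^{(k)}\|_2 \,\min\!\left\{1,\; \frac{\|\mathcal{A}(X^{(k)})X^{(k)}\|_2}{\kappa_2(1+\delta^{(k)})}\right\},
\end{equation*}
for constants $\kappa_1, \kappa_2 > 0$ depending only on $A$ and $\alpha$. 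This is the one genuinely non-routine ingredient and it replaces the usual Cauchy-point argument, which does not apply verbatim on the sphere.

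For step (ii), the Lipschitz continuity of $\nabla^2 \F$ on the compact sphere yields $|\F(Z^{(k)}) - W^{(k)}(Z^{(k)}) - \mathrm{const}| = O(\|Z^{(k)}-X^{(k)}\|_2^3)$, while large $\delta^{(k)}$ shrinks $\|Z^{(k)}-X^{(k)}\|_2$ through the regularization. Dividing by the predicted decrease from step (i) gives $1 - \rho^{(k)} \to 0$ as $\delta^{(k)} \to \infty$, so there is a threshold $\bar\delta$ (depending on the current projected gradient norm) beyond which $\rho^{(k)} \ge \eta_2$, forcing \eqref{alg:tau-up} to leave $\delta^{(k+1)} \le \delta^{(k)}$. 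Thus on any subsequence of iterates with $\|\mathcal{A}(X^{(k)})X^{(k)}\|_2 \ge \epsilon$, the regularization parameter $\delta^{(k)}$ is uniformly bounded. For step (iii), I argue by contradiction: if $\limsup_k \|\mathcal{A}(X^{(k)})X^{(k)}\|_2 > 0$, continuity of $\nabla \F$ combined with the bounded step length $\|Z^{(k)} - X^{(k)}\|_2$ on such iterates produces infinitely many successful steps each decreasing $\F$ by at least a fixed positive amount, contradicting the boundedness of $\F$ on the compact sphere.

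The hard part will be twofold. First, the Cauchy decrease in step (i): because the subproblem is itself nonconvex and constrained to the unit sphere, I must derive the bound using the explicit closed-form feasible curve \eqref{eq-Y} and verify that the nonmonotone Barzilai–Borwein safeguarding in Algorithm \ref{alg:ConOptM} does not destroy the quantitative guarantee. Second, upgrading the standard trust-region $\liminf = 0$ conclusion to the full limit requires a quantitative continuity argument: I need to show that whenever $\|\mathcal{A}(X^{(k)})X^{(k)}\|_2 \ge 2\epsilon$ and the following iterate satisfies $\|\mathcal{A}(X^{(k+1)})X^{(k+1)}\|_2 \le \epsilon$, the displacement $\|X^{(k+1)} - X^{(k)}\|_2$ is bounded below, so each such transition again contributes a uniformly positive decrease; together with step (ii) this rules out any oscillation of the gradient norm above a positive level.
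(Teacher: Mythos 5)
Your outline is correct and follows essentially the same route as the paper, which itself gives no self-contained argument but simply defers to the adaptive-regularization convergence analysis of \cite{WenMilzarekUlbrichZhang2013}; your three ingredients (Cauchy-type model decrease from the feasible-curve subproblem solver, eventual acceptance of steps once $\delta^{(k)}$ is large, and the standard contradiction plus the liminf-to-lim upgrade on the compact sphere) are precisely the skeleton of that cited proof. The points you flag as hard --- verifying that the nonmonotone BB safeguarding in Algorithm \ref{alg:ConOptM} still yields a quantitative first-step decrease, and the displacement lower bound for the full-limit upgrade --- are indeed the parts requiring care, but both go through by the standard arguments.
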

\begin{proof}
Since the energy function $\F(X)$ is differentiable and its gradient $\nabla
\F(X)$ is Lipschitiz continuous, the results can be obtained using the proofs of
\cite{WenMilzarekUlbrichZhang2013} in a similar fashion.
\end{proof}


The discretization of (\ref{prob-min}) on a fine mesh usually
leads to a problem of huge size ($M\gg1$)
whose computation cost is very expensive, especially for high dimensional case.
A useful technique is to adopt the cascadic multigrid method \cite{Born}, i.e.
solve the minimization problem (\ref{prob-min}) on the coarsest mesh, and then use the obtained solution as the initial guess of the problem on a fine mesh, and repeat until we obtain the solution on the finest mesh.
We present the mesh refinement technique via the cascadic multigrid method
in the Algorithm \ref{alg:meshrefine}, where the discretized problems are
solved from the coarsest mesh to the finest mesh.

\begin{algorithm2e}[h]\caption{A cascadic multigrid method for mesh refinement}
\label{alg:meshrefine}
Given an initial mesh $\mathcal{T}^{0}$ and  $X^{(0)}$, set $k=0$.\\
\While{convergence is not met}{
Use $X^{(k)}$ as an initial guess on the $k$th mesh $\mathcal{T}^k$ to calculate
the optimal solution $X^{(k+1)}$ of the minimization problem \eqref{eq:prob}
using the Algorithm  \ref{alg:TR}. \\ 
Refine the mesh $\mathcal{T}^{k}$ uniformly to obtain $\mathcal{T}^{k+1}$. \\
$k\gets k+1$.
}
\end{algorithm2e}

\section{Numerical results}\label{numerical}
In this section, we report several numerical examples to illustrate the efficiency and accuracy
of our method. All experiments were performed on a PC with a 2.3GHz CPU (i7 Core)
and the algorithms were implemented in MATLAB (Release 8.1.0).
In our experiments, the Algorithm \ref{alg:ConOptM} is called to compute
the ground state of non-rotating BEC, i.e., $\Omega=0$, since it is a relatively easy
problem. The algorithm is stopped
either when a maximal number of $K$ iterations is reached or  when
\be\label{crit1}
\frac{\|X^{(k+1)} - X^{(k)}\|_\infty}{\tau^{(k)}} \leq \varepsilon_0.
\ee
The default values of $\varepsilon_0$ and $K$ are set to be $10^{-6}$ and $2000$,
respectively. In order to test the spectral accuracy of the SP discretization, a
tighter stopping criterion is taken.
A normalization  step is executed if  $|X^*X-1| > 10^{-14}$
to enforce the feasibility. For non-rotating BEC with strong interaction, i.e., $\beta\gg1$,
the initial solution is usually chosen as the Thomas-Fermi (TF) approximation
\cite{Bao-Cai-2013-review,Bao-Du-2004,Pit-Str-2003}
\be\label{TFA}
\phi_0(\xb) = \begin{cases}
\sqrt{\frac{\mu^{\rm TF}-V(\xb)}{\beta}},& \mbox{ if } V(\xb)\le\mu^{\rm TF},\\
0,&\mbox{ otherwise},
\end{cases}
\ee
where $\mu^{\rm TF}=\frac12\left(\frac{3\beta}{2}\right)^{2/3}$, $\left(\frac{\beta\gamma_y}{\pi}\right)^{1/2}$
and $\frac12\left(\frac{15\beta\gamma_y\gamma_z}{4\pi}\right)^{2/5}$ for $d=1$, $2$ and $3$, respectively.
Since the Algorithm \ref{alg:ConOptM}
may converge slowly for computing the ground state of rotating BEC, i.e., $\Omega\ne0$,
we choose the  regularized Newton method (i.e., Algorithm \ref{alg:TR})
together with the cascadic multigrid method for mesh refinement (i.e., Algorithm \ref{alg:meshrefine})
 and it is terminated when
\be\label{crit2}
\|X^{(k+1)} - X^{(k)}\|_\infty \leq \delta_0,
\ee
where the default value of $\delta_0$ is set to $10^{-8}$.
Let $\phi_g$ be the ``exact'' ground state obtained numerically with a very fine mesh
and we denote its energy and chemical potential as $E_g=E(\phi_g)$ and
$\mu_g=\mu(\phi_g)$, respectively.
To quantify the ground state, one important quantity is the root mean square which is defined as
\be\label{rms}
\alpha_{\rm rms} = \| \alpha\phi_g\|_{L^2(U)} =
\sqrt{\int_{U} \alpha^2|\phi_g(\xb)|^2 d\xb},\quad \alpha=x,y \mbox{ or } z.
\ee

\subsection{Accuracy test and results in 1D}
We take $d=1$ and $\Omega=0$ in (\ref{prob-min}) and (\ref{energy})
and consider two different trapping potentials

Case I. A harmonic oscillator potential (\ref{harmpen}) with $d=1$, $\gamma_x=1$ and $\beta=400$.

Case II. An optical lattice potential $V(x)=\frac{x^2}{2} + 25\sin^2(\frac{\pi x}{4})$ and $\beta=250$.

The ground state is numerically computed by the Algorithm \ref{alg:ConOptM}
on a  bounded computational domain $U=(-16,16)$ which is partitioned equally with
the mesh size $h$. In order to compare the accuracy of the FD and SP discretizations,
we set $\epsilon_0=10^{-12}$ in (\ref{crit1}).
Let $\phi_{g,h}^{\rm FD}$ and $\phi_{g,h}^{\rm SP}$ be the numerical ground states
obtained with the mesh size $h$ by using FD and SP discretization, respectively.
Table \ref{table-fd1d-sp1d-case1} depicts the numerical errors for Case I,
and respectively, Table \ref{table-fd1d-sp1d-case2} for Case II.

\begin{table}[htdp]\caption{Accuracy of the FD and SP discretizations for Case I in \S 4.1.}
\label{table-fd1d-sp1d-case1}
\setlength{\tabcolsep}{8pt}
\begin{center}
\begin{tabular}{lccccc}\hline
Mesh size & $h=1$ &  $h=1/2$  &  $h=1/4$  &  $h=1/8$  \\ \hline
$\max|\phi_g-\phi_{g,h}^{\rm FD}|$ & 2.06E-03 & 1.24E-03 & 2.88E-04  & 7.43E-05  \\ 
$|E_g-E(\phi_{g,h}^{\rm FD})|$ & 8.59E-04 & 2.66E-04 & 6.46E-05 & 1.59E-05 \\ 
$|\mu_g-\mu(\phi_{g,h}^{\rm FD})|$ & 2.21E-02 & 9.48E-05 & 3.49E-05 & 8.60E-06  \\[1mm]  \hline\hline
$\max|\phi_g-\phi_{g,h}^{\rm SP}|$ & 1.31E-03 & 7.04E-05  & 1.95E-08 & 5.01E-13  \\ 
$|E_g-E(\phi_{g,h}^{\rm SP})|$ & 5.69E-05 & 2.64E-06 & 8.45E-12 & 2.17E-13 \\ 
$|\mu_g-\mu(\phi_{g,h}^{\rm SP})|$ & 1.66E-02 &  8.71E-05 & 9.55E-10 & 2.52E-12  \\ \hline 
\end{tabular}
\end{center}
\end{table}

\begin{table}[htdp]\caption{Accuracy of the FD and SP discretizations for Case II in \S 4.1.}
\label{table-fd1d-sp1d-case2}
\setlength{\tabcolsep}{8pt}
\begin{center}
\begin{tabular}{lccccc}\hline
Mesh size  &  $h=1$ & $h=1/2$  &  $h=1/4$  &  $h=1/8$     \\ \hline
$\max|\phi_g-\phi_{g,h}^{\rm FD}|$ & 1.02E-02 & 5.81E-03 & 9.97E-04  & 2.50E-04  \\ 
$|E_g-E(\phi_{g,h}^{\rm FD})|$ & 2.66E-02 & 8.39E-03 & 2.03E-03 & 5.02E-04  \\ 
$|\mu_g-\mu(\phi_{g,h}^{\rm FD})|$ & 1.27E-01 & 4.05E-03 & 8.28E-04 & 2.08E-04  \\[1mm] \hline\hline
$\max|\phi_g-\phi_{g,h}^{\rm SP}|$ & 7.98E-03 & 1.21E-03  & 2.22E-06 & 1.90E-11  \\ 
$|E_g-E(\phi_{g,h}^{\rm SP})|$ & 4.22E-04 & 1.96E-04 & 4.99E-08 & 7.53E-13  \\ 
$|\mu_g-\mu(\phi_{g,h}^{\rm SP})|$ & 9.76E-02 &  4.11E-03 & 5.61E-07 & 9.17E-13\\ \hline 
\end{tabular}
\end{center}
\end{table}

From Tables \ref{table-fd1d-sp1d-case1} and \ref{table-fd1d-sp1d-case2},
it is observed that the SP discretization is spectrally accurate,
while the FD discretization  has only second order accuracy for computing the ground state
of BEC in 1D.
Hence, when high accuracy is required, the SP discretization is preferred since it
needs much fewer grid points, and thus it saves significantly
memory cost and computational cost.

For comparison with existing numerical results in the literatures
\cite{GPELab-2014,Bao-Cai-2013-review,Bao-Chern-Lim-2006,Bao-Du-2004,Bao-Tang-2003},
Figure \ref{fig-sp1d} plots the ground states $\phi_g(x)$  obtained by the SP
discretization for  cases I and II. In addition, their energy, chemical potential
and root mean squares are obtained as for Case I:
$E_g = 21.3601$, $\mu_g = 35.5775$ and $x_{\mathrm{rms}} = 3.7751$;
and for Case II: $E_g = 26.0839$, $\mu_g = 38.0692$ and $x_{\mathrm{rms}} = 3.3609$.
These numerical results agree very well with those reported in the literatures
\cite{GPELab-2014,Bao-Cai-2013-review,Bao-Chern-Lim-2006,Bao-Du-2004,Bao-Tang-2003}.

\begin{figure}[htdp]\caption{Ground states $\phi_g(x)$ for Case I (left) and Case II (right)
in \S 4.1.}
\centerline{\includegraphics[width=6.5cm,height=4.5cm]{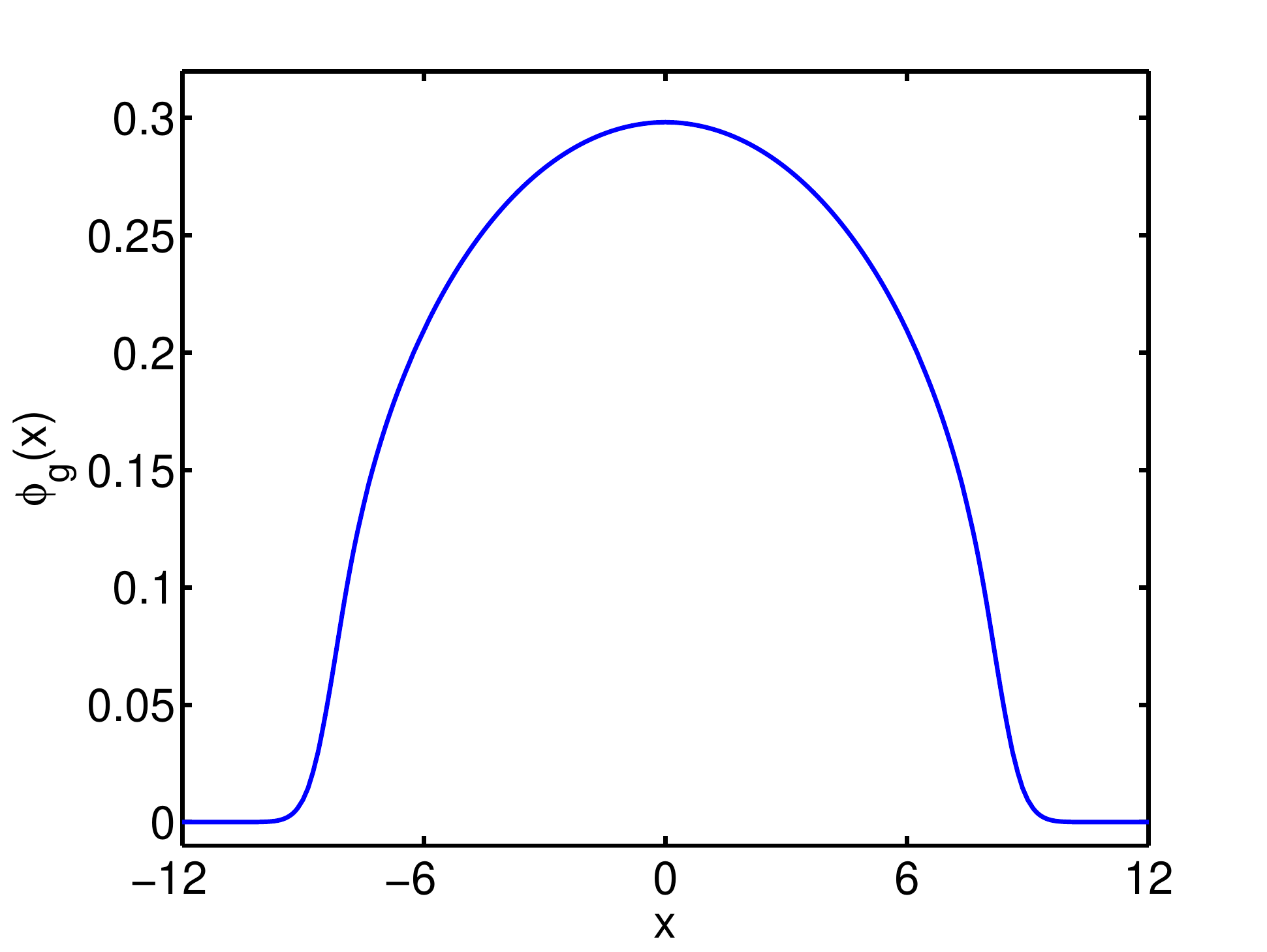}
\includegraphics[width=6.5cm,height=4.5cm]{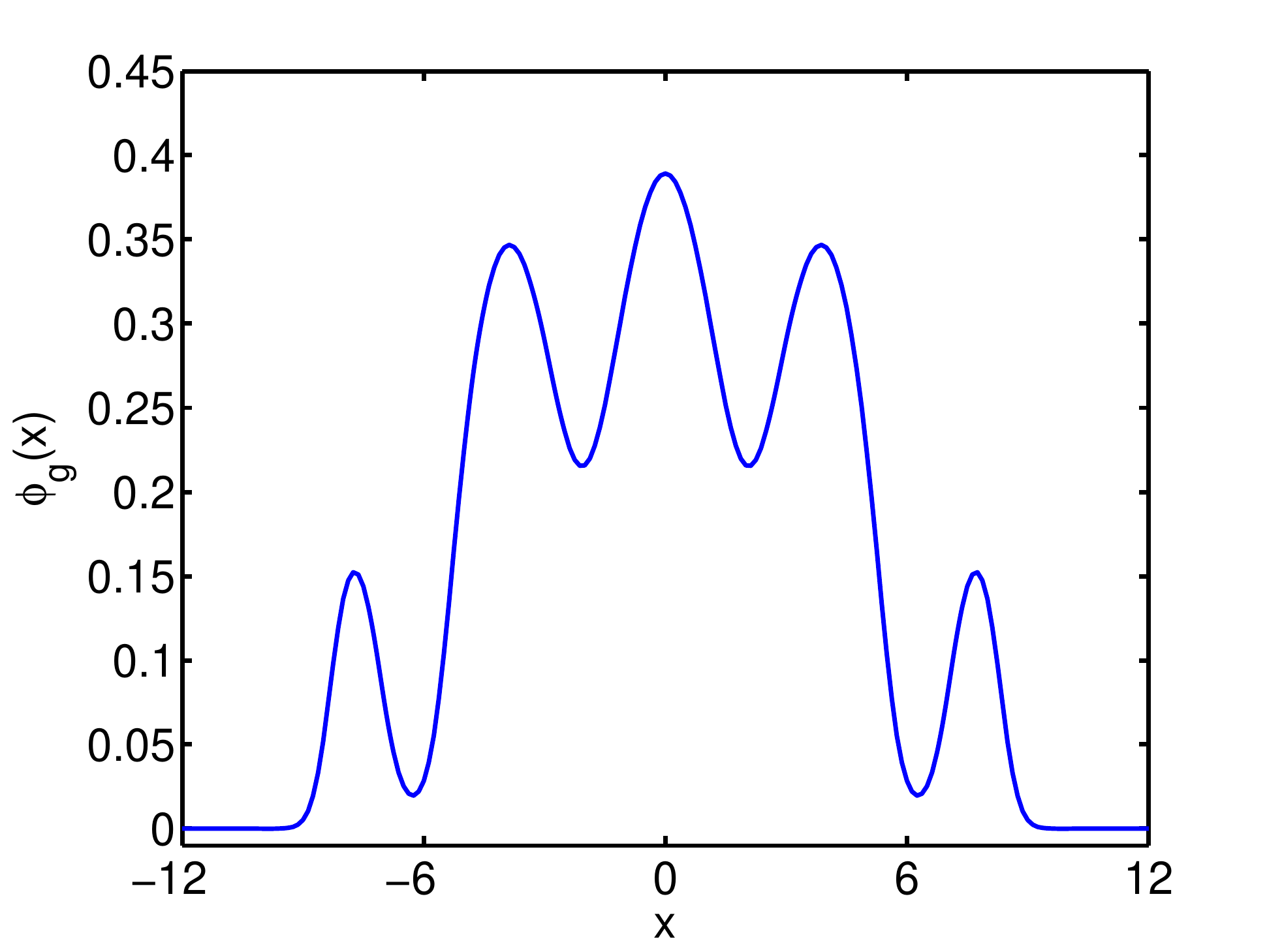}}\label{fig-sp1d}
\end{figure}

\subsection{Accuracy test and results in 3D}

We take $d=3$ and $\Omega=0$ in (\ref{prob-min}) and (\ref{energy})
and consider two different trapping potentials  \cite{Bao-Du-2004}:

Case I. A harmonic oscillator potential (\ref{harmpen}) with $d=3$, $\gamma_x=1$,
$\gamma_y=2$, $\gamma_z=4$ and $\beta=200$.

Case II. A harmonic oscillator potential and a potential of a
stirrer corresponding to a far-blue detuned Gaussian laser beam
\bee
V(x,y,z) = \frac12(x^2+\gamma_y^2y^2+\gamma_z^2z^2) + \omega_0 e^{-\delta((x-r_0)^2+y^2)}
\eee
with $\gamma_y=1$, $\gamma_z=2$, $\omega_0=4$, $\delta=1$, $r_0=1$ and $\beta=200$.

\begin{table}[htdp]\caption{Accuracy of the FD and SP discretizations for Case I in \S 4.2.}
\label{table-fd3d-sp3d-case1}
\setlength{\tabcolsep}{8pt}
\begin{center}
\begin{tabular}{lccccc}\hline
Mesh size  & $h=2$ & $h=1$  &  $h=1/2$  &  $h=1/4$  \\ \hline
$\max|\phi_g-\phi_{g,h}^{\rm FD}|$ & 2.28E-02 & 5.16E-03 & 1.11E-03  & 2.51E-04  \\ 
$|E_g-E(\phi_{g,h}^{\rm FD})|$ & 1.26E-01 & 5.82E-02 & 1.44E-02 & 3.41E-03  \\ 
$|\mu_g-\mu(\phi_{g,h}^{\rm FD})|$ & 4.45E-02 & 3.10E-02 & 9.40E-03 & 2.23E-03  \\[1mm] \hline\hline
$\max|\phi_g-\phi_{g,h}^{\rm SP}|$ & 1.10E-02 & 1.68E-03  & 8.68E-06 & 7.34E-10  \\ 
$|E_g-E(\phi_{g,h}^{\rm SP})|$ & 1.01E-01 & 6.49E-05 & 1.45E-08 & 1.09E-11  \\ 
$|\mu_g-\mu(\phi_{g,h}^{\rm SP})|$ & 1.57E-01 &  4.17E-03 & 5.48E-07 & 1.55E-11  \\ \hline 
\end{tabular}
\end{center}
\end{table}

Again, the ground state is numerically computed by the Algorithm \ref{alg:ConOptM}
on bounded computational domains $U=(-8,8)\times(-6,6)\times(-4,4)$ and
$U=(-8,8)^3$ for Case I and II, respectively, which are partitioned uniformly with
the same number of nodes in each direction. Let $h$ be the mesh size in the $x$-direction.
Again, we set $\epsilon_0=10^{-12}$ in (\ref{crit1}).
Let $\phi_{g,h}^{\rm FD}$ and $\phi_{g,h}^{\rm SP}$ be the numerical ground states
obtained with the mesh size $h$ by using FD and SP discretization, respectively.
Table \ref{table-fd3d-sp3d-case1} depicts the numerical errors for Case I,
and \revise{respectively}, Table \ref{table-fd3d-sp3d-case2} for Case II.

\begin{table}[htdp]\caption{Accuracy of the FD and SP discretizations for Case II in \S 4.2.}
\label{table-fd3d-sp3d-case2}
\setlength{\tabcolsep}{8pt}
\begin{center}
\begin{tabular}{lccccc}\hline
Mesh size  & $h=2$ & $h=1$  &  $h=1/2$  &  $h=1/4$   \\ \hline
$\max|\phi_g-\phi_{g,h}^{\rm FD}|$ & 1.61E-02 & 7.92E-03 & 1.69E-03  & 3.92E-04  \\ 
$|E_g-E(\phi_{g,h}^{\rm FD})|$ & 6.76E-01 & 6.06E-02 & 1.33E-02 & 3.16E-03  \\ 
$|\mu_g-\mu(\phi_{g,h}^{\rm FD})|$ & 5.37E-01 & 6.16E-02 & 8.09E-03 & 1.92E-03  \\[1mm] \hline\hline
$\max|\phi_g-\phi_{g,h}^{\rm SP}|$ & 1.69E-01 & 2.57E-03 & 4.38E-05 & 1.18E-08 \\ 
$|E_g-E(\phi_{g,h}^{\rm SP})|$ & 1.87E-01 & 6.69E-03 & 9.55E-06 & 6.34E-12 \\ 
$|\mu_g-\mu(\phi_{g,h}^{\rm SP})|$ & 5.69E-01 &  2.21E-02 & 7.79E-06 & 9.85E-11  \\ \hline 
\end{tabular}
\end{center}
\end{table}

Again, from Tables  \ref{table-fd3d-sp3d-case1} and \ref{table-fd3d-sp3d-case2},
it is observed that the SP discretization is spectrally accurate,
while the FD discretization  has only second order accuracy for computing the ground state
of BEC in 3D.
Hence, when high accuracy is required and/or the solution
has multiscale phenomena, the SP discretization is preferred since it
needs much fewer grid points, and thus it saves significantly
memory cost and computational cost.

Again, for comparison with existing numerical results in the literatures
\cite{GPELab-2014,Bao-Cai-2013-review,Bao-Chern-Lim-2006,Bao-Du-2004,Bao-Tang-2003},
Figure \ref{fig-sp3d} plots the ground states $\phi_g(x,0,z)$  obtained by the SP
discretization for  cases I and II. In addition, their energy, chemical potential
and root mean squares are obtained as for Case I:
$E_g = 8.3345$, $\mu_g = 11.0102$, $x_{\mathrm{rms}} =
1.6710$,  $y_{\mathrm{rms}} = 0.8751$, and $z_{\mathrm{rms}} =  0.4884$;
and for Case II: $E_g = 5.2696$,  $\mu_g = 6.7019$, $x_{\mathrm{rms}} =
1.3744$, $y_{\mathrm{rms}} = 1.4358$ and $z_{\mathrm{rms}} = 0.7043$.
These numerical results agree very well with those reported in the literatures
\cite{GPELab-2014,Bao-Cai-2013-review,Bao-Chern-Lim-2006,Bao-Du-2004,Bao-Tang-2003}.

\begin{figure}[htdp]\caption{Ground states $\phi_g(x,0,z)$ for Case I (left) and Case II (right)
in \S 4.2.}
\centerline{\includegraphics[width=6.5cm,height=6.5cm]{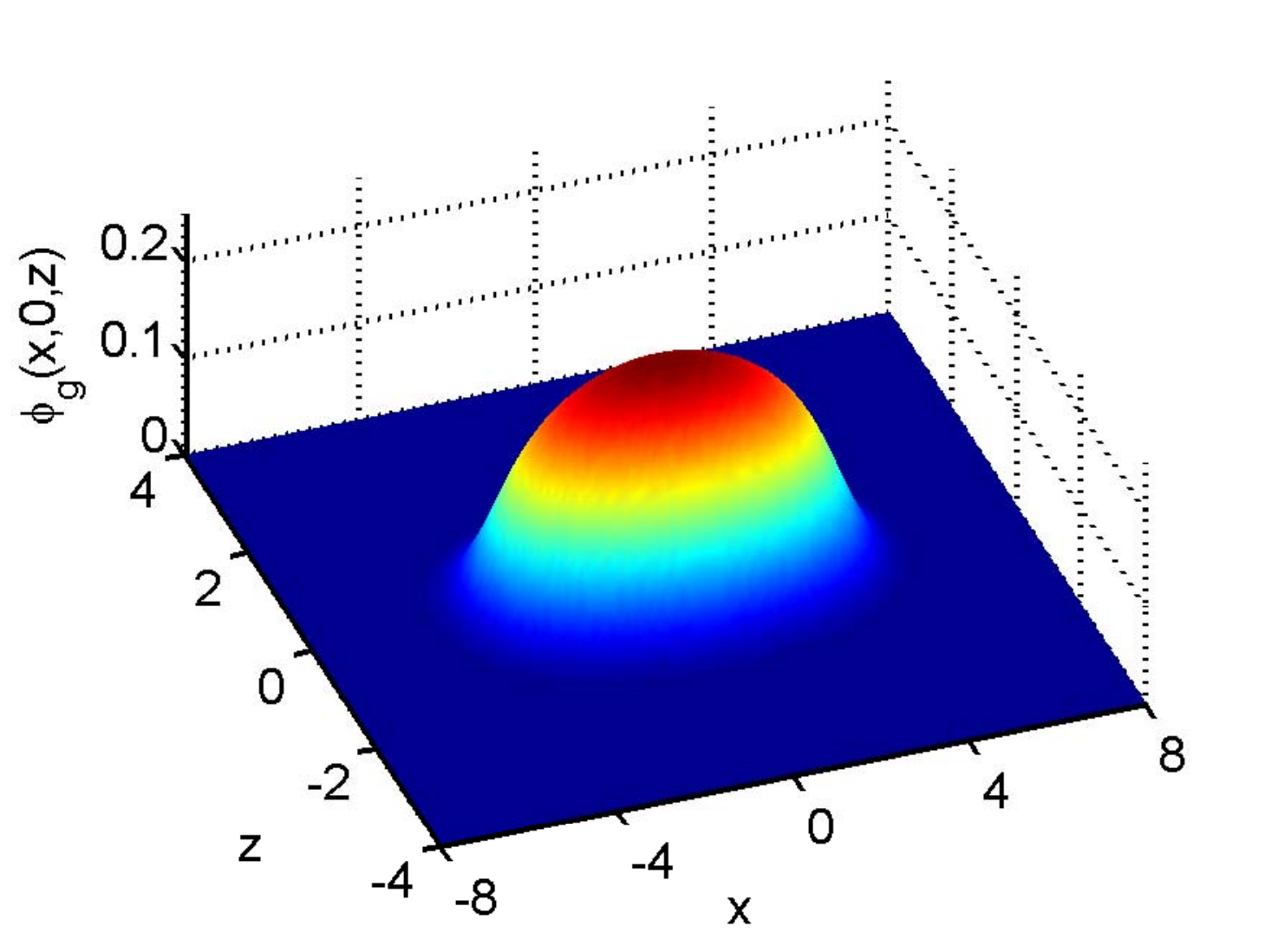}
\includegraphics[width=6.5cm,height=6.5cm]{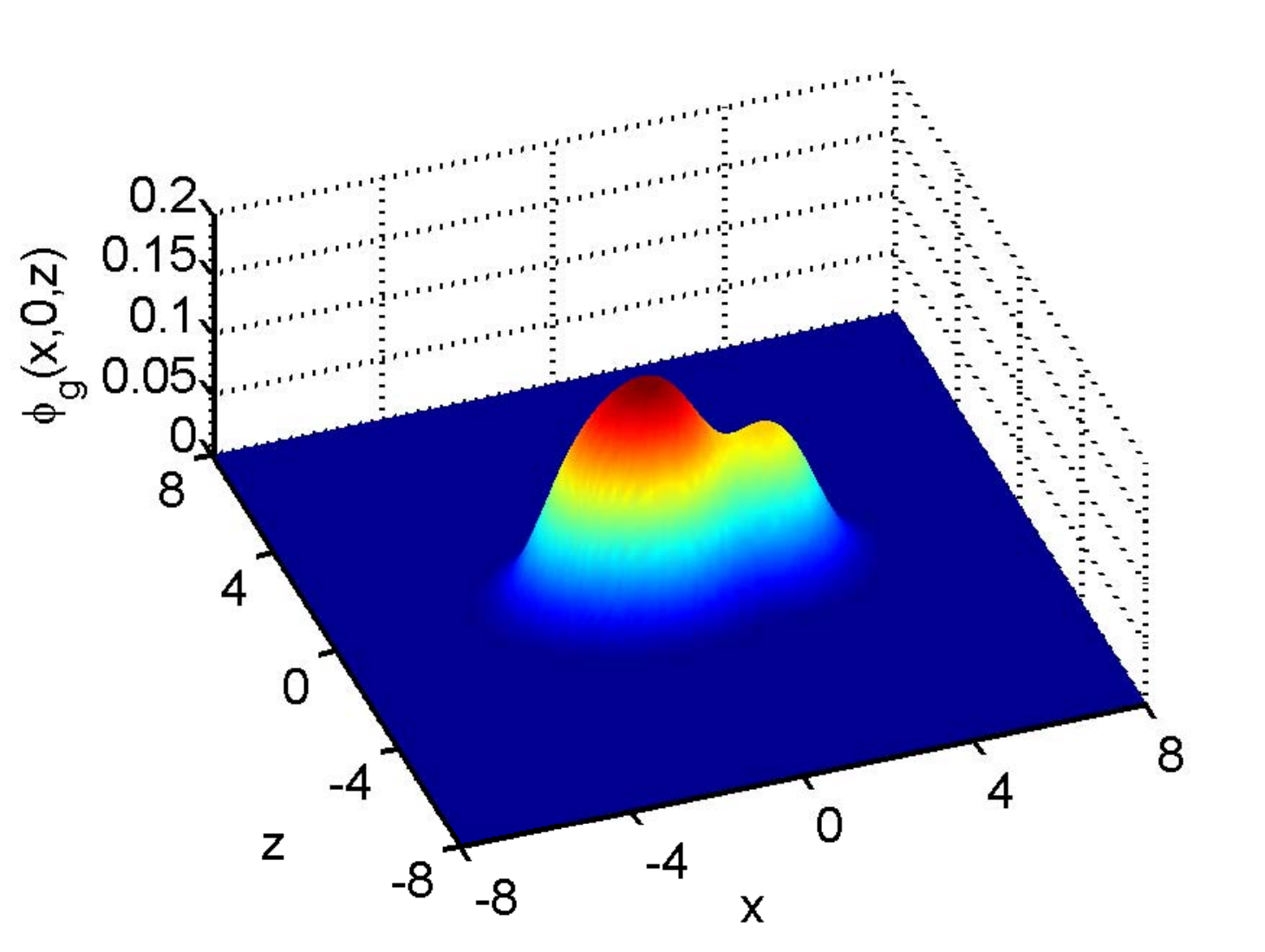}}\label{fig-sp3d}
\end{figure}

To demonstrate the high resolution of the SP
discretization and compare our algorithm with existing numerical
methods \cite{GPELab-2014,Bao-Cai-2013-review,Bao-Chern-Lim-2006}, we also apply our algorithm
to compute the ground state of BEC in 3D with
a combined harmonic and optical lattice potential \cite{Bao-Chern-Lim-2006} as
\be\label{optic98}
V(x,y,z) = \frac12\left(x^2+y^2+z^2\right) + 50\left[ \sin^2\left(\frac{\pi x}{4}\right) + \sin^2\left(\frac{\pi y}{4}\right) + \sin^2\left(\frac{\pi z}{4}\right) \right],
\ee
together with different interaction constants $\beta=100$, $800$ and $6400$.
The ground state is numerically computed by the Algorithm \ref{alg:ConOptM}
on bounded computational domains $U=(-8,8)^3$ for $\beta=100$ and $800$, and $U=(-12,12)^3$ for $\beta=6400$,
which are partitioned uniformly with the number of nodes $N_1=N_2=N_3=2^7+1$ in each direction.
The stopping criterion is set to the default value.

\begin{table}[htdp]\caption{Comparison of numerical results computed by
Algorithm \ref{alg:ConOptM} (top half part with rows 2--4)
and BESP implemented in GPELab (bottom half part with rows 5--7)
for trapping potential (\ref{optic98}) with different $\beta$.
}
\label{table-sp3d-ex2}
\setlength{\tabcolsep}{4pt}
\begin{center}
\begin{tabular}{ccccccccccc}\hline
$\beta$ & $\max|\phi_g|^2$  &  $E(\phi_g)$  &  $\mu_g$  &  $x_{\mathrm{rms}}$  &  $y_{\mathrm{rms}}$  &  $z_{\mathrm{rms}}$  &  iter & nfe & cpu\,(s) \\ \hline
100 & 0.2536 & 23.2356 & 27.4757  & 1.8716 & 1.8716 & 1.8716 & 112 & 115 & 76.47 \\
800 & 0.0490 & 33.8023 & 40.4476 & 2.6620 & 2.6620 & 2.6620 & 260 & 279 & 183.34 \\ 
6400 & 0.0098 & 52.4955 & 63.7146 & 3.3685 & 3.3685 & 3.3685 & 305 & 327 & 217.03 \\[1mm] \hline\hline
100 & 0.2536 & 23.2356 & 27.4757  & 1.8717 & 1.8717 & 1.8717 & 188 & - & 914.53 \\
800 & 0.0490 & 33.8023 & 40.4476 & 2.6620 & 2.6620 & 2.6620 & 494 & - & 2513.75 \\ 
6400 & 0.0098 & 52.4955 & 63.7149 & 3.3684 & 3.3684 & 3.3684 & 747 & - & 4014.17 \\ \hline
\end{tabular}
\end{center}
\end{table}

Table \ref{table-sp3d-ex2} depicts the maximum value of the wave
function $\max |{\phi_g}|^2$, the energy $E(\phi_g)$, the chemical potential
$\mu_g$ and the root mean squares $x_{\rm rms}$, $y_{\rm rms}$ and $z_{\rm rms}$
for different interaction constants $\beta$, as well as
the number of iterations (iter), the number of function evaluations (nfe)
and the computational time (cpu).
For comparison, we also display numerical results obtained
by using the BESP method implemented in GPELab~\cite{GPELab-2014}  (a MATLAB
toolbox designed for computing ground state and dynamics of BEC)
with time step taken as $\Delta t = 10^{-2}$ and all other setting the same as above.
In addition, Figure \ref{fig-sp3d-ex2} shows the isosurface plots and
their corresponding slice views of the ground states for different $\beta$.

\begin{figure}[htdp]\caption{Isosurfaces (left column) and their corresponding slice views (right column)
of the ground states $\phi_g(x,y,z)$ of a BEC in 3D with combined harmonic and optical
lattice potential (\ref{optic98}) for different $\beta=100,\,800,\,6400$ (from top to bottom).}
\label{fig-sp3d-ex2}
\begin{minipage}[t]{1\textwidth}
\centering
\subfigure[Isosurface plot]{
\begin{minipage}[t]{0.45\textwidth}
\centering
 \includegraphics[width=0.95\textwidth, height =
 0.8\textwidth]{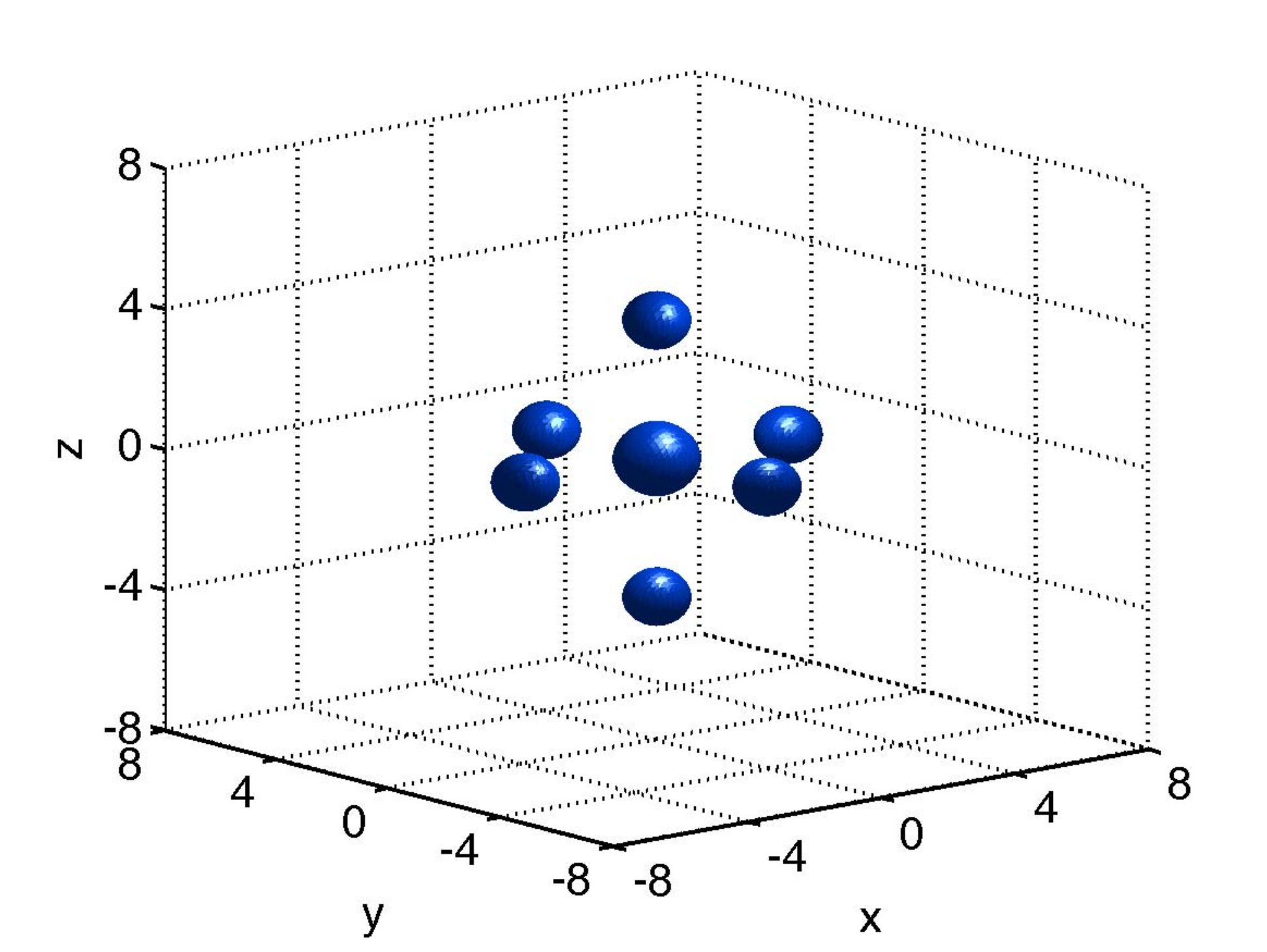}\\
\includegraphics[width=0.95\textwidth, height =
 0.8\textwidth]{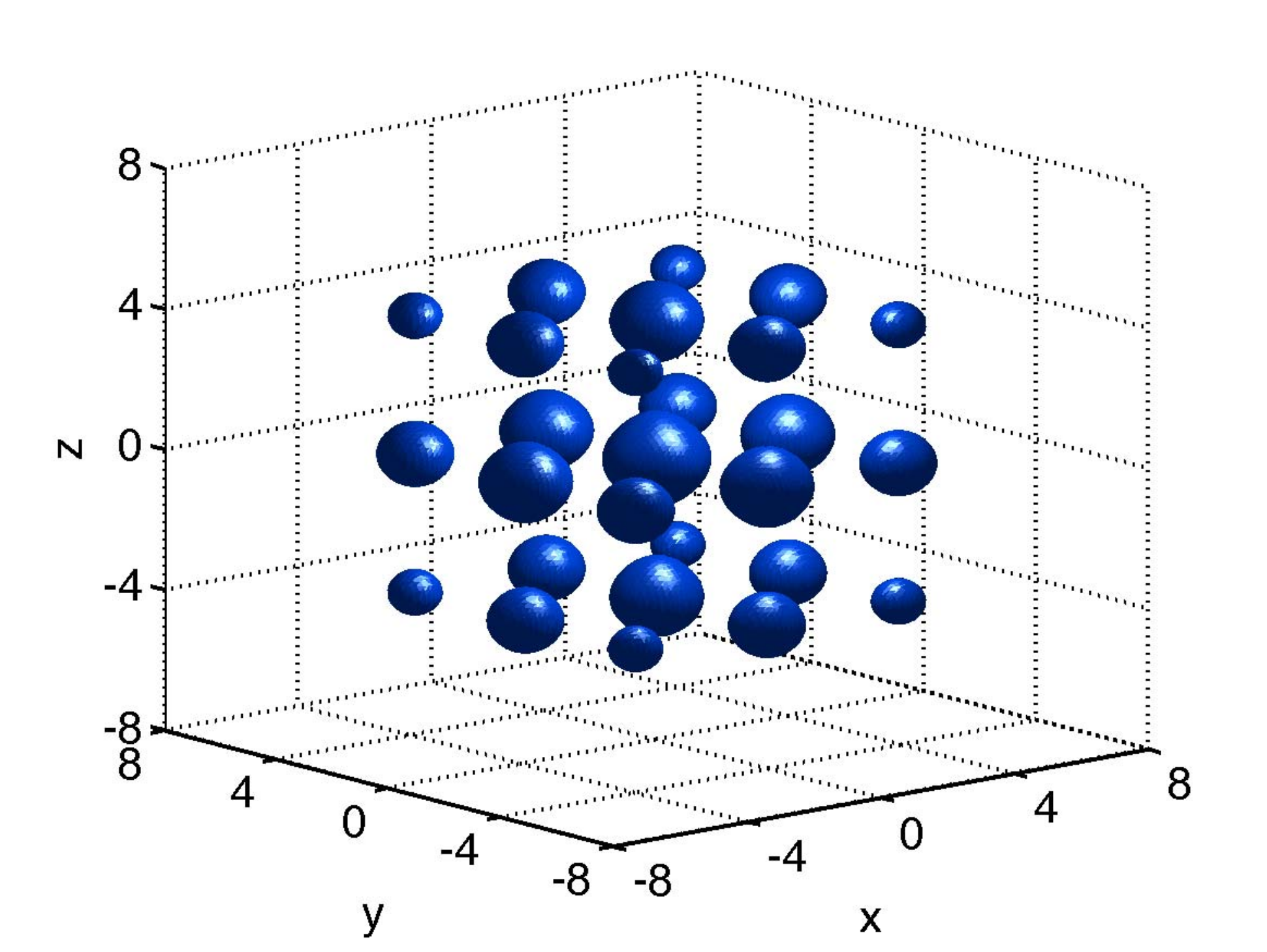}\\
 \includegraphics[width=0.95\textwidth, height =
 0.8\textwidth]{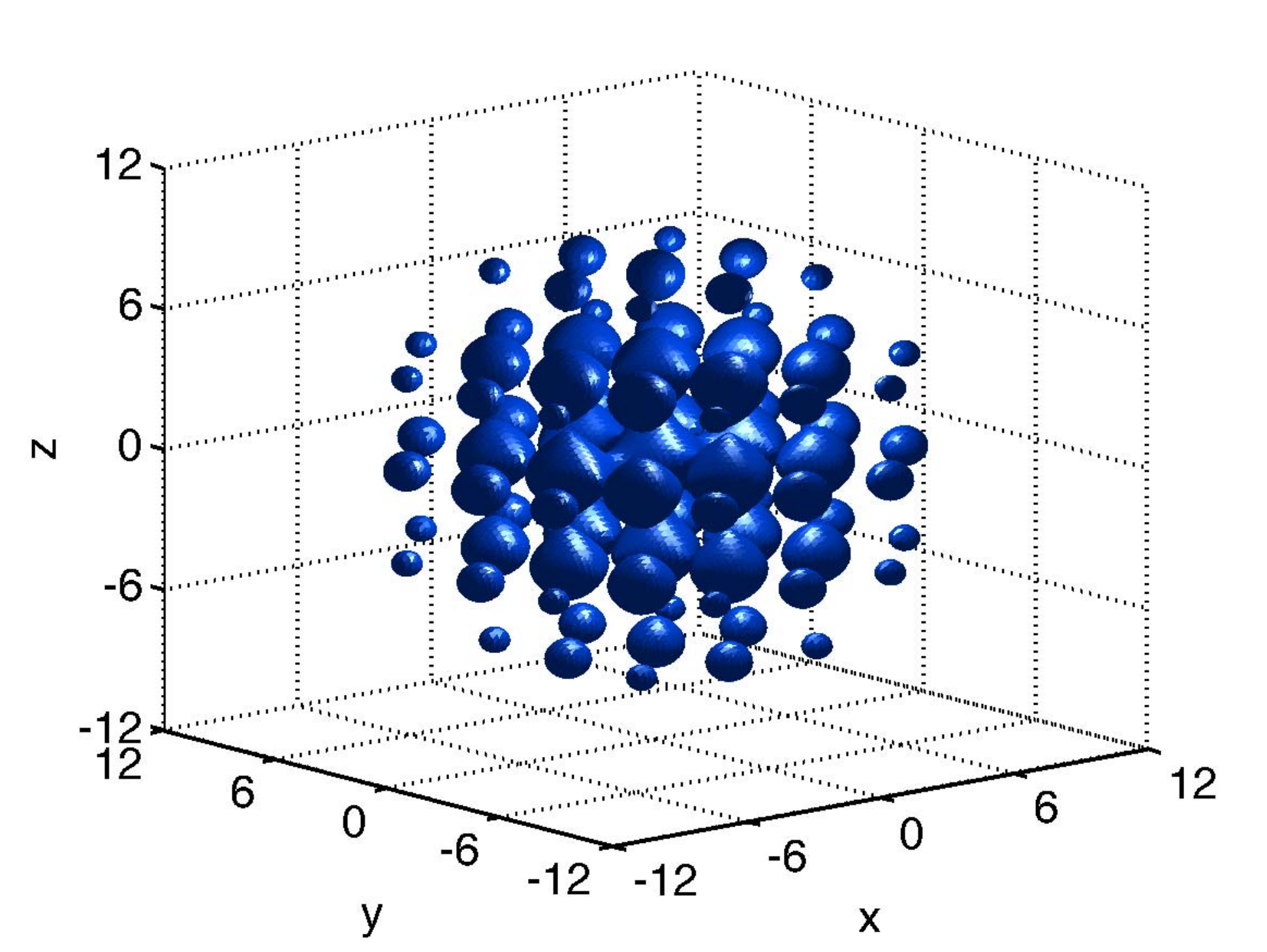}
\end{minipage}}
\subfigure[Slice plot]{
\begin{minipage}[t]{0.45\textwidth}
\centering
 \includegraphics[width=0.95\textwidth, height =
 0.8\textwidth]{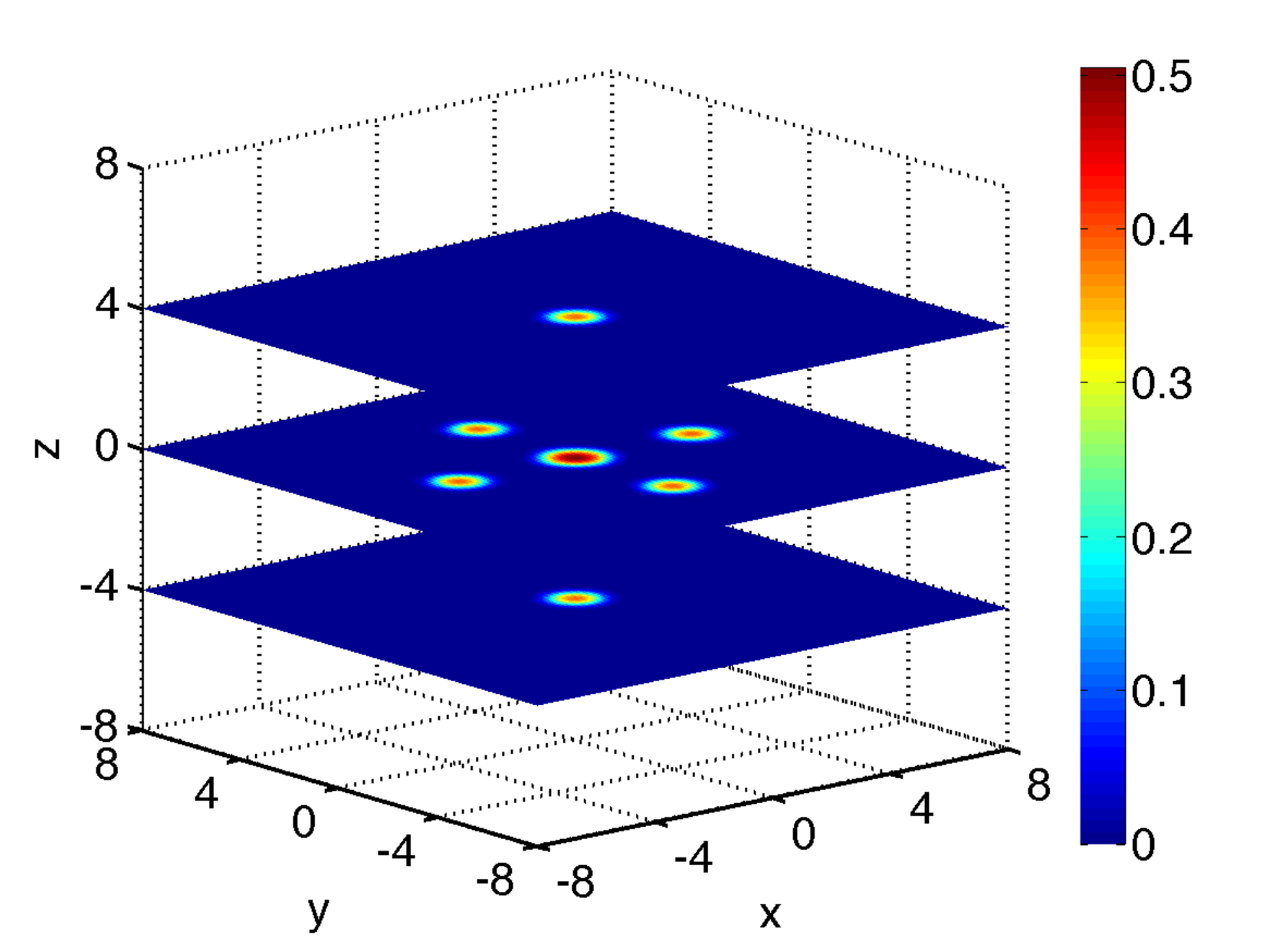}\\
 \includegraphics[width=0.95\textwidth, height =
 0.8\textwidth]{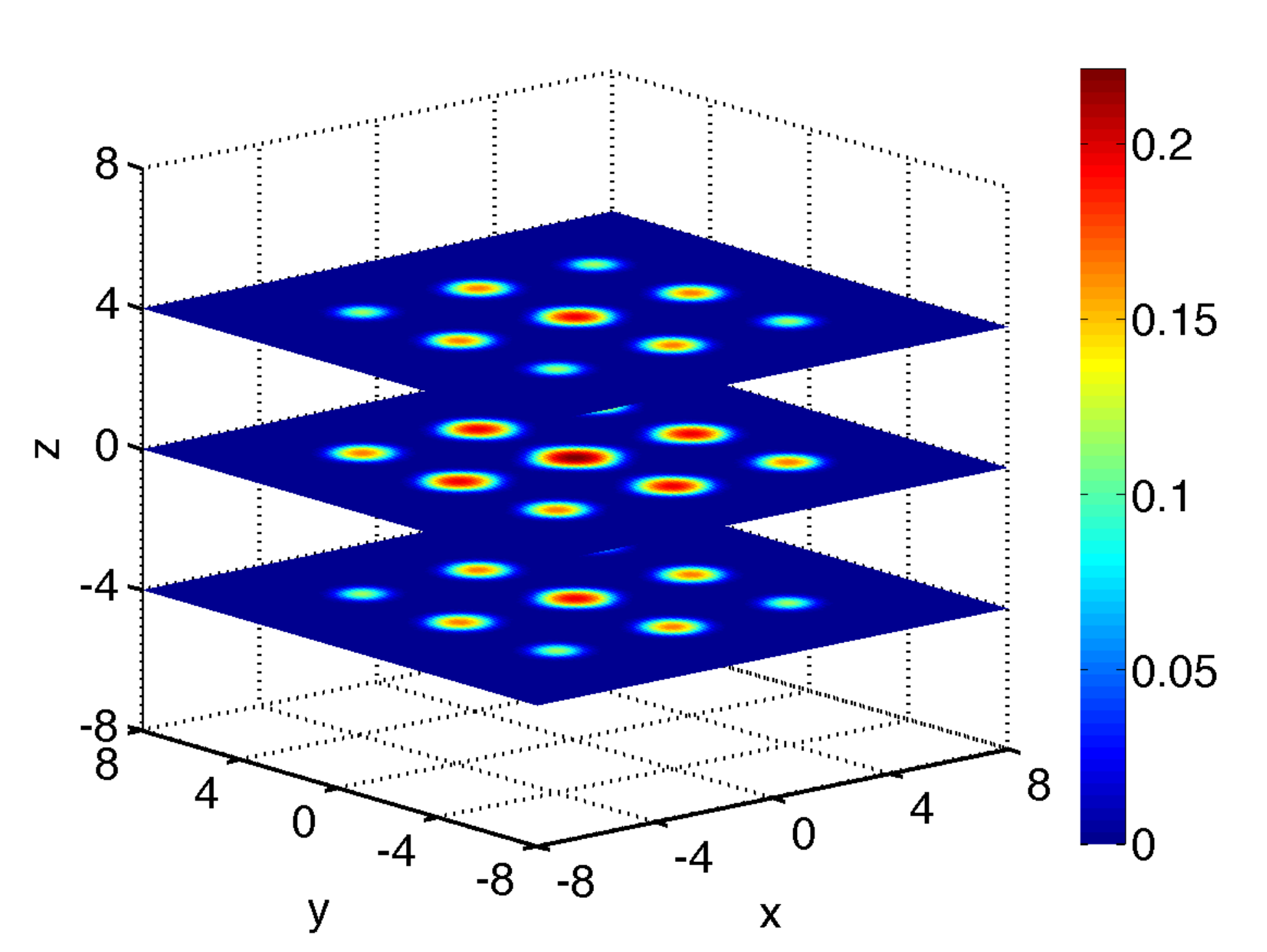}\\
 \includegraphics[width=0.95\textwidth, height =
 0.8\textwidth]{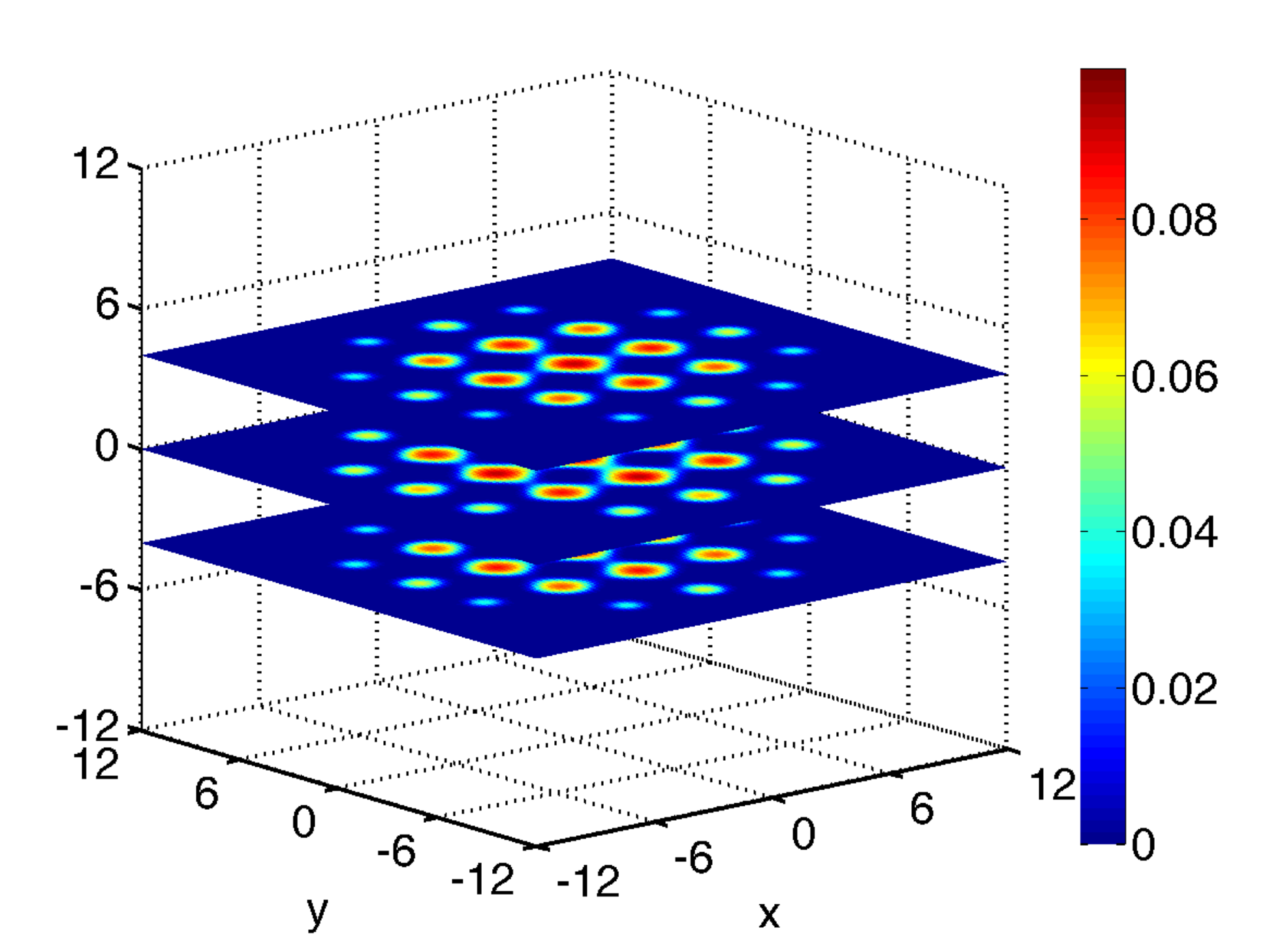}
 \end{minipage}}
\end{minipage}
\end{figure}

From Table \ref{table-sp3d-ex2}, we can see that
the Algorithm \ref{alg:ConOptM} converges to the ground state
much faster than the BESP method presented in
\cite{Bao-Cai-2013-review,Bao-Chern-Lim-2006} for all $\beta$
in computing the ground state of BEC in 3D.

\subsection{Results for rotating BEC in 2D}
We take $d=2$ and the harmonic potential (\ref{harmpen}) with $\gamma_x=\gamma_y=1$
in (\ref{prob-min}) and (\ref{energy}) and consider different $\beta$ and $\Omega$.
The ground state is numerically computed by  the regularized Newton method (i.e. Algorithm \ref{alg:TR})
with the FP discretization
on bounded computational domains
$U=(-10,10)^2$ and $U=(-12,12)^2$ for $\beta=500$ and $\beta=1000$,
respectively. The domains are partitioned uniformly with the number of nodes
$N_1=N_2=2^8+1$ in each direction.
 In our computations, in the Algorithm \ref{alg:TR}, we first call the gradient type method,
i.e., Algorithm \ref{alg:ConOptM}, with
a maximum number of iterations $K_{\rm init}=100$ to obtain a good initial guess $X^{(1)}$.
Then the  regularized Newton subproblem is  solved by the Algorithm
\ref{alg:ConOptM} up to a maximum number of iterations $K_{\rm sub}=200$.
In order to reduce the computational cost,
the cascadic multigrid method (i.e., Algorithm \ref{alg:meshrefine}) is applied
for mesh refinement with the coarsest mesh $\mathcal{T}^0$ chosen with the number of nodes $N_1=N_2=2^4+1$
in each direction.

For a rotating BEC, the ground state is a complex-valued function, and thus it is very tricky
to choose a proper initial data such that the numerical result is guaranteed to be the ground state.
Similarly to those in the literatures \cite{Bao-Wang-Markowich-2005}, here we test our
algorithms with the following different initial solutions
\beaa
&& {\rm (a)} \ \phi_{a}(x,y) =  \frac{1}{\sqrt{\pi}}e^{-(x^2+y^2)/2},\\
&& {\rm (b)} \ \phi_{b}(x,y) = \frac{x+iy}{\sqrt{\pi}}e^{-(x^2+y^2)/2}, \qquad (\bar{b})\ \phi_{\bar{b}}(x,y)=\bar{\phi}_{b}(x,y),\\
&& {\rm (c)} \ \phi_{c}(x,y) = \frac{[\phi_a(x,y))+\phi_b(x,y)]/2}{\|[\phi_a(x,y))+\phi_b(x,y)]/2\|},
\qquad (\bar{c})\ \phi_{\bar{c}}(x,y)=\bar{\phi}_{c}(x,y),\\
&& {\rm (d)} \ \phi_{d}(x,y) = \frac{(1-\Omega)\phi_a(x,y))+\Omega\phi_b(x,y)}{\|(1-\Omega)\phi_a(x,y))+\Omega\phi_b(x,y)\|},
\qquad (\bar{d})\ \phi_{\bar{b}}(x,y)=\bar{\phi}_{d}(x,y).
\eeaa

Table \ref{tab-fp2d-Rot-Bet-500-energy} displays the energy obtained numerically
with different initial data selected in the above with $\beta=500$ for different
$\Omega=0.00$, $0.25$, $0.50$, $0.60$, $0.70$, $0.80$, $0.90$ and $0.95$
(in the table, we use a ``$\dagger$'' sign to indicate the one with the lowest energy among
different initial data for given $\beta$ and $\Omega$),
and Table \ref{tab-fp2d-Rot-Bet-500-iter-cpu} summarizes the lowest energy among different initial
data and the corresponding
number of iterations and computation time for $\beta=500$ with different $\Omega$.
Figure \ref{fig-fp2d-Rot-Bet-500} plots the ground state density $|\phi_g(x,y)|^2$
for $\beta=500$ with different $\Omega$.
In addition, Tables \ref{tab-fp2d-Rot-Bet-1000-energy}-\ref{tab-fp2d-Rot-Bet-1000-iter-cpu}
 and Figure \ref{fig-fp2d-Rot-Bet-1000} present similar numerical results for $\beta=1000$.

\begin{table}[htdp]\caption{Energy obtained numerically with different initial data
of rotating BECs for $\beta=500$  and different $\Omega$ in \S 4.3.}
\label{tab-fp2d-Rot-Bet-500-energy}
\setlength{\tabcolsep}{6pt}
\begin{center}
\begin{tabular}{lllllllll}\hline
$\Omega$ & 0.00 & 0.25 & 0.50 & 0.60 & 0.70 & 0.80 & 0.90 & 0.95 \\ \hline
$({\rm a})$   & 8.5118  & 8.5118  & 8.0246  & 7.5890  & 6.9731  & 6.1016  & 4.7778  & 3.7417  \\ 
$({\rm b})$  & 8.5118  & 8.5106  & 8.0246  & 7.5845  & 6.9731  & 6.1055  & 4.7778  & 3.7417  \\ 
$({\rm\bar b})$  & 8.5118  & 8.5118  & 8.0197$^\dag$  & 7.5890  & 6.9731  & 6.1016  & 4.7778  & 3.7416  \\ 
$({\rm c})$ & 8.5118  & 8.5106  & 8.0246  & 7.5890  & 6.9726  & 6.1016  & 4.7778 & 3.7417  \\ 
$({\rm\bar c})$  & 8.5118  & 8.5118  & 8.0246  & 7.5890  & 6.9731  & 6.0997  & 4.7778  & 3.7415  \\ 
$({\rm d})$  & 8.5118$^\dag$  & 8.5106$^\dag$  & 8.0246  & 7.5890  & 6.9726$^\dag$ & 6.0997$^\dag$  & 4.7778$^\dag$  & 3.7415$^\dag$  \\ 
$({\rm\bar d})$  & 8.5118  & 8.5118  & 8.0246  & 7.5845$^\dag$  & 6.9731  & 6.1016  & 4.7778 & 3.7416 \\ \hline
\end{tabular}
\end{center}
\end{table}

\begin{table}[htdp]\caption{Ground state energy, the number of iterations for
the regularized Newton method (iter) on the finest mesh and the total computational time (cpu)
of rotating BECs for $\beta=500$  and different $\Omega$ in \S 4.3.}
\label{tab-fp2d-Rot-Bet-500-iter-cpu}
\setlength{\tabcolsep}{6pt}
\begin{center}
\begin{tabular}{ccccccccc}\hline
$\Omega$ & 0.00 & 0.25 & 0.50 & 0.60 & 0.70 & 0.80 & 0.90 & 0.95 \\ \hline
{\rm iter}    &   3  &   3  &   3 &  128 &  49 &  18 &  69 & 4  \\ 
{\rm cpu\,(s)}   & 1.14 & 18.71 & 41.57 &  355.63 &  147.03 &  130.87 &  286.12 & 56.08  \\ \hline \hline
{\rm Energy} & 8.5118  & 8.5106  & 8.0197  & 7.5845  & 6.9726  & 6.0997  & 4.7778 & 3.7415 \\ \hline
\end{tabular}
\end{center}
\end{table}

\begin{figure}[htdp]\caption{Plots of the ground state density
  $|\phi_g(x,y)|^2$ -- corresponding to the energy listed in the Table \ref{tab-fp2d-Rot-Bet-500-energy} --
of rotating BECs for $\beta=500$ and different $\Omega$ in \S4.3.}
\label{fig-fp2d-Rot-Bet-500}
\begin{minipage}[t]{0.94\textwidth}
\centering
 \includegraphics[width=0.4\textwidth]{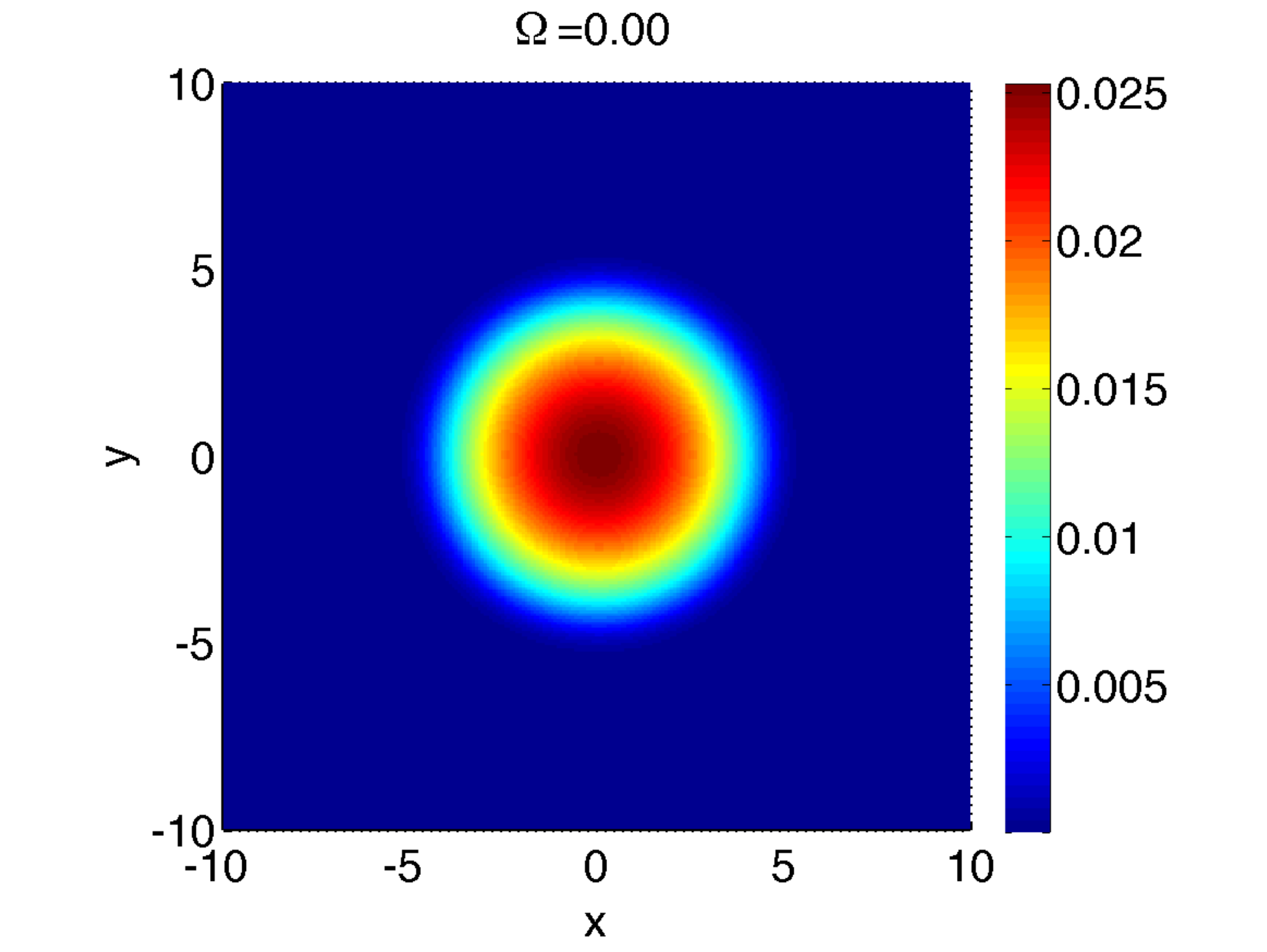}
 \includegraphics[width=0.4\textwidth]{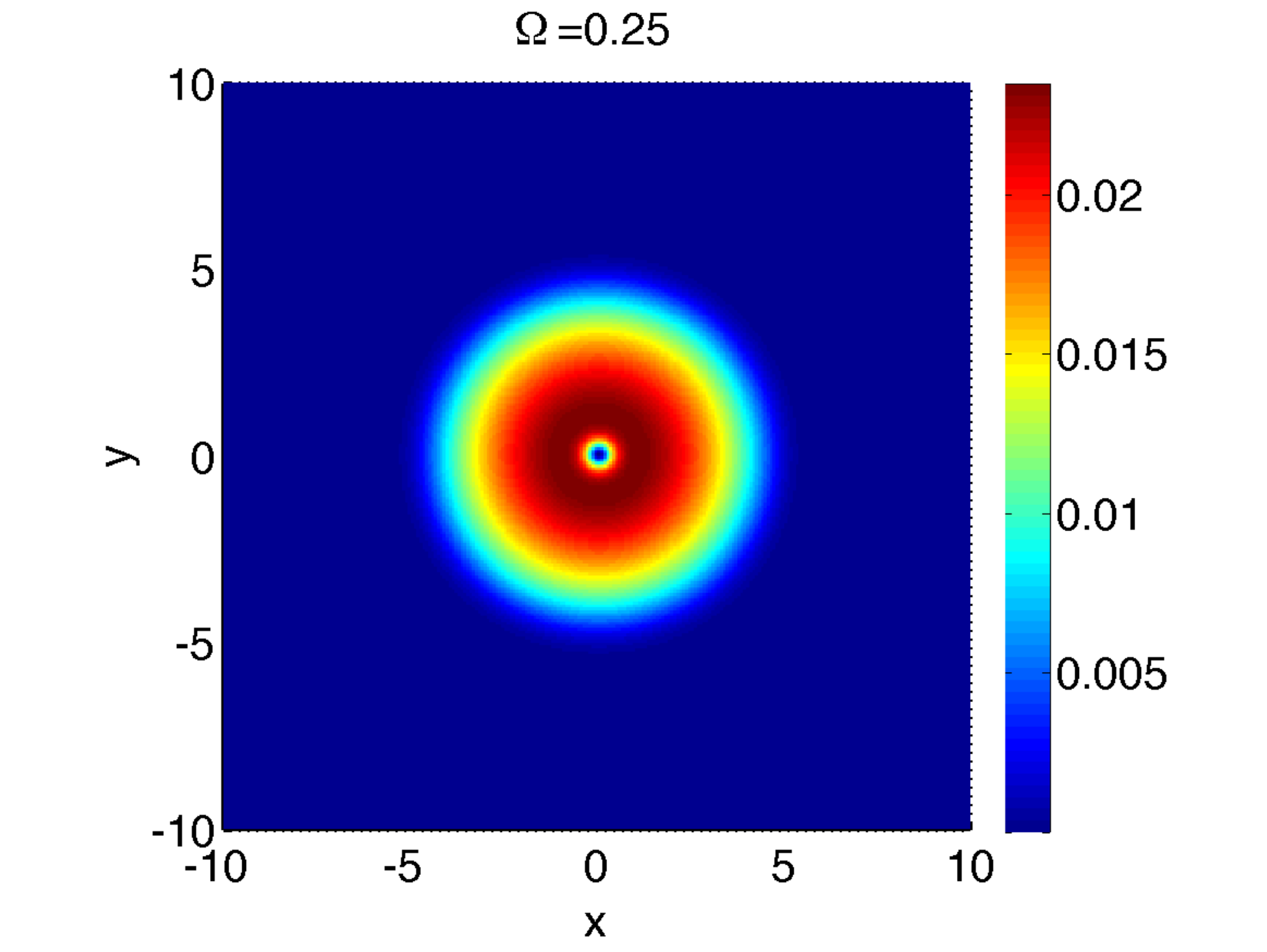}\\
 \includegraphics[width=0.4\textwidth]{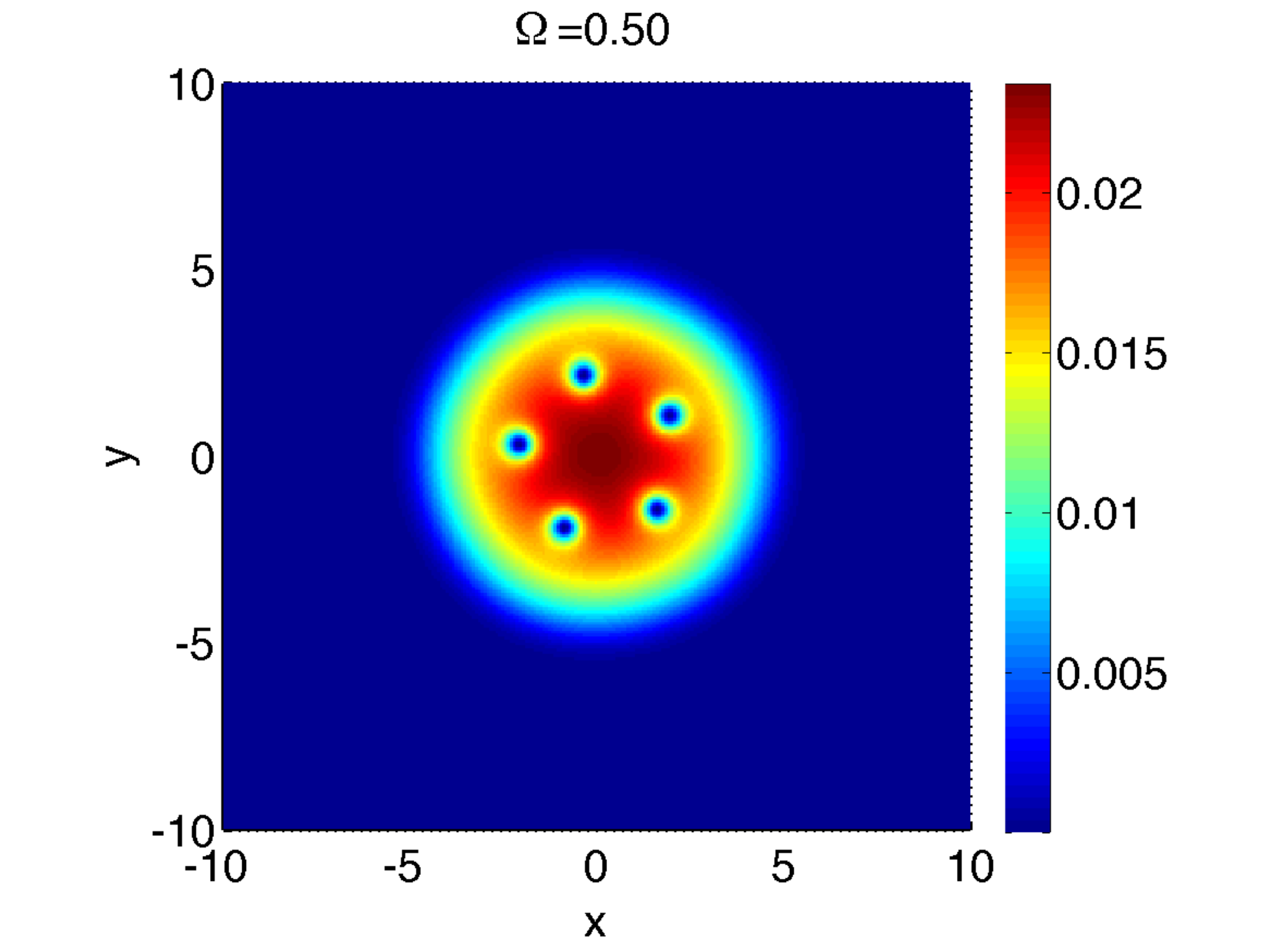}
 \includegraphics[width=0.4\textwidth]{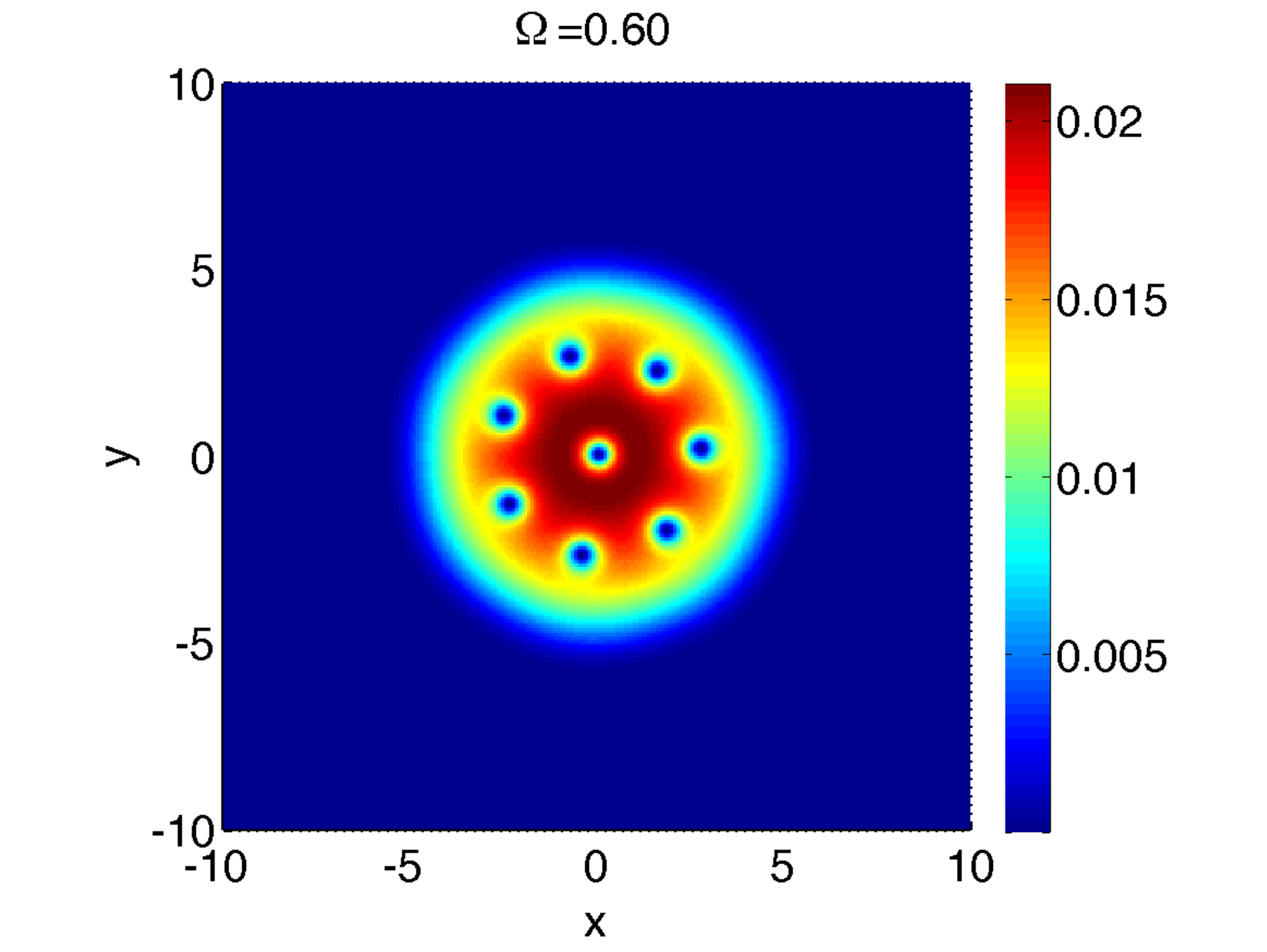}\\
 \includegraphics[width=0.4\textwidth]{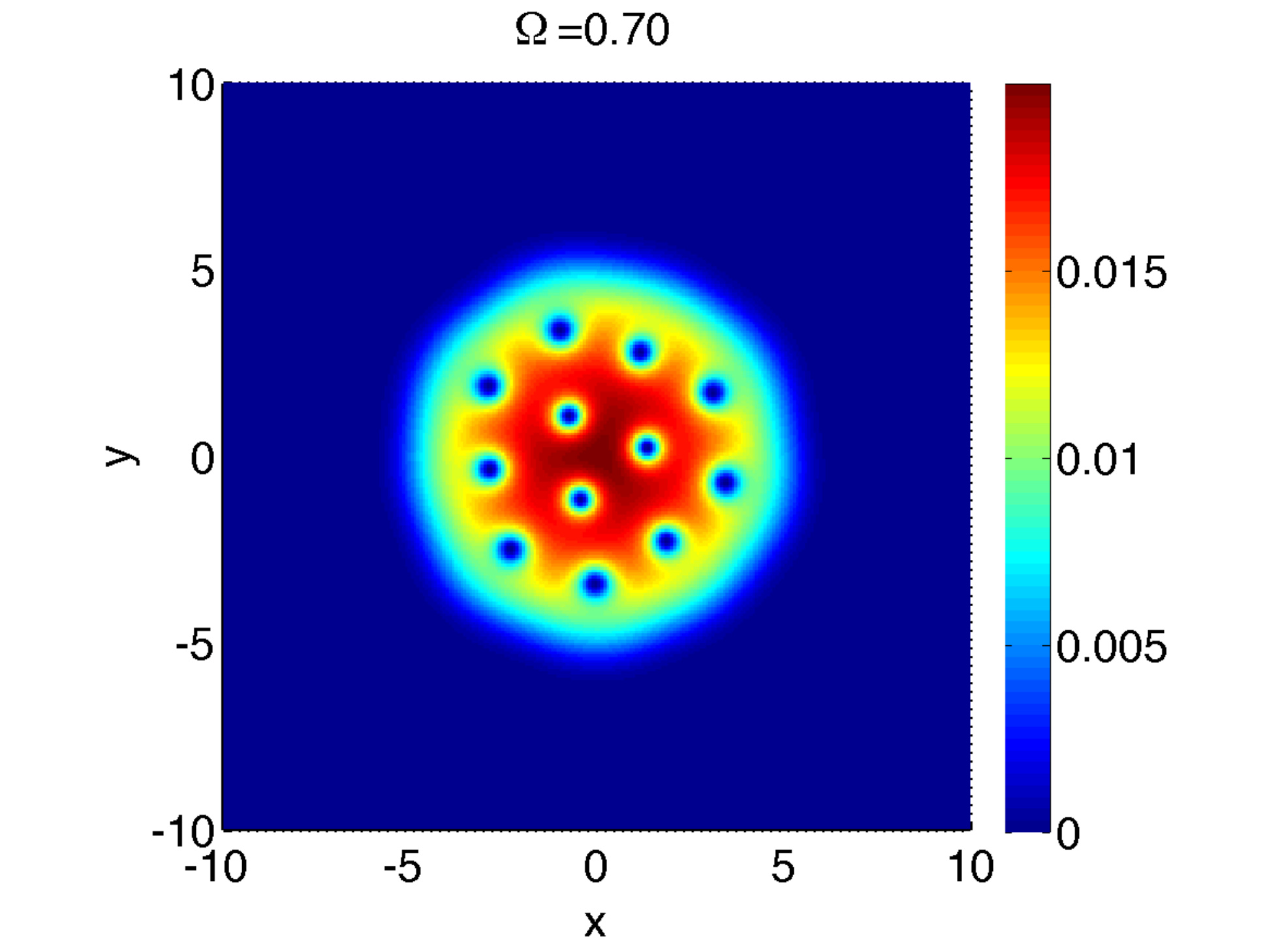}
 \includegraphics[width=0.4\textwidth]{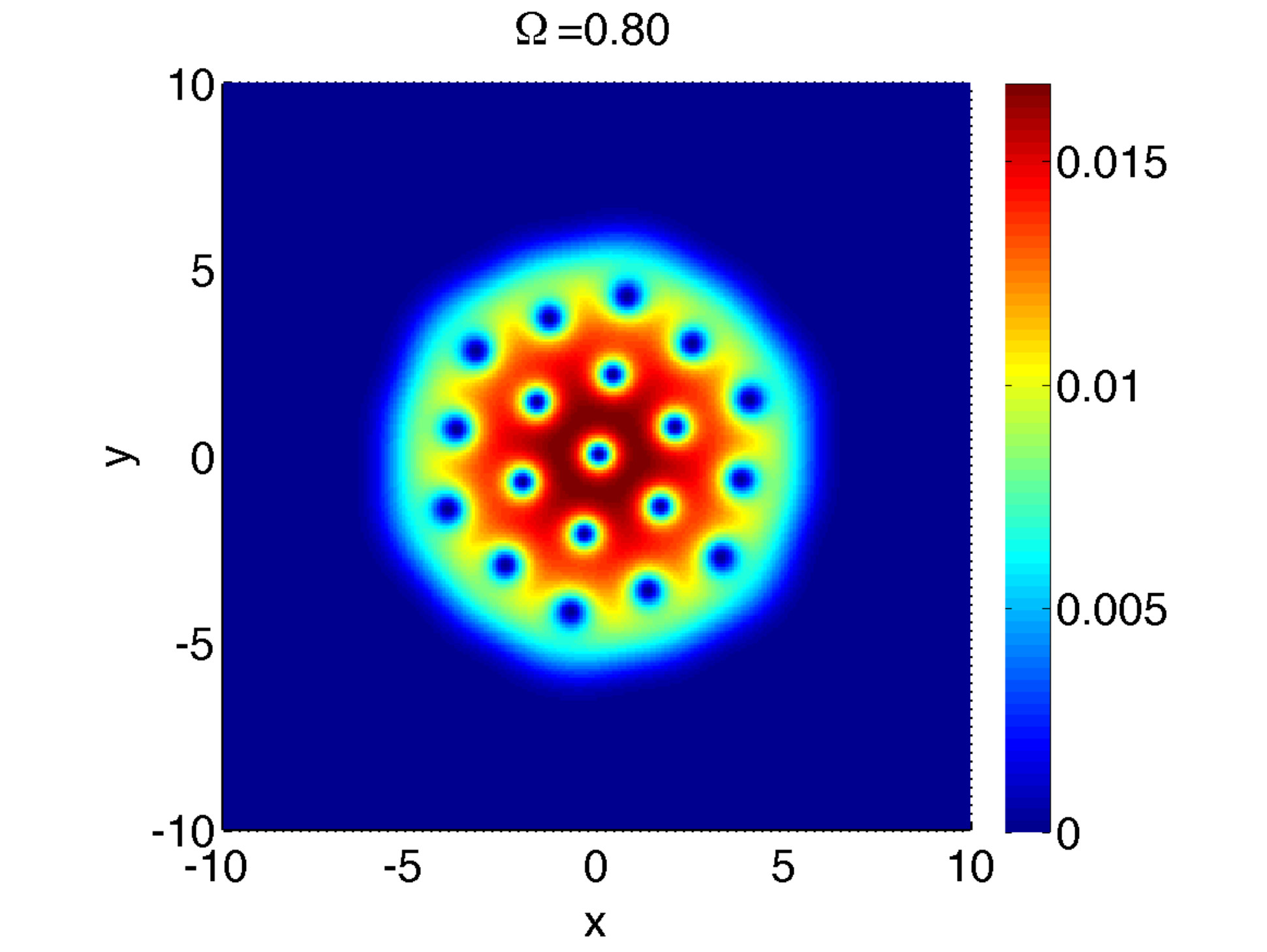}\\
 \includegraphics[width=0.4\textwidth]{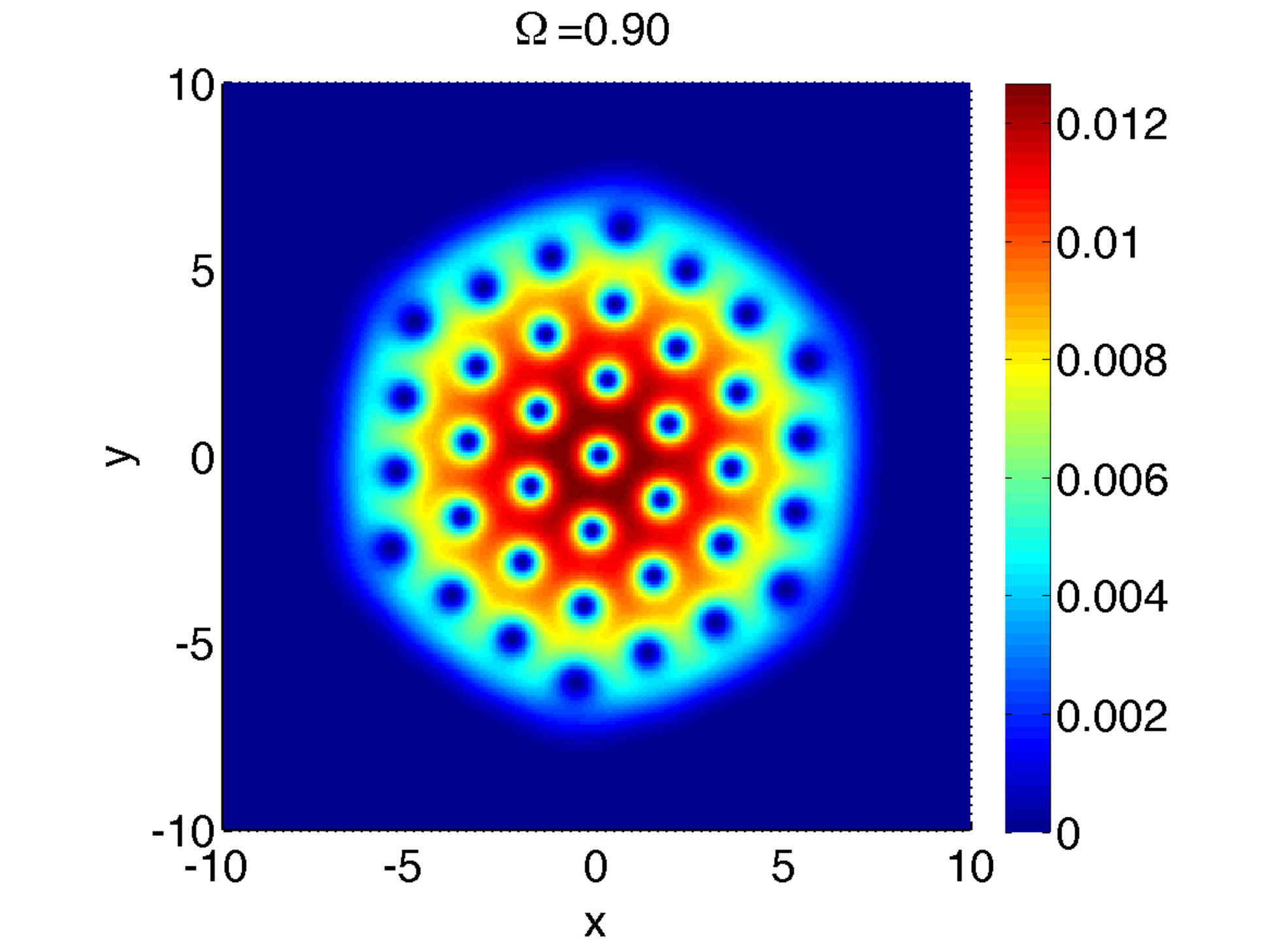}
 \includegraphics[width=0.4\textwidth]{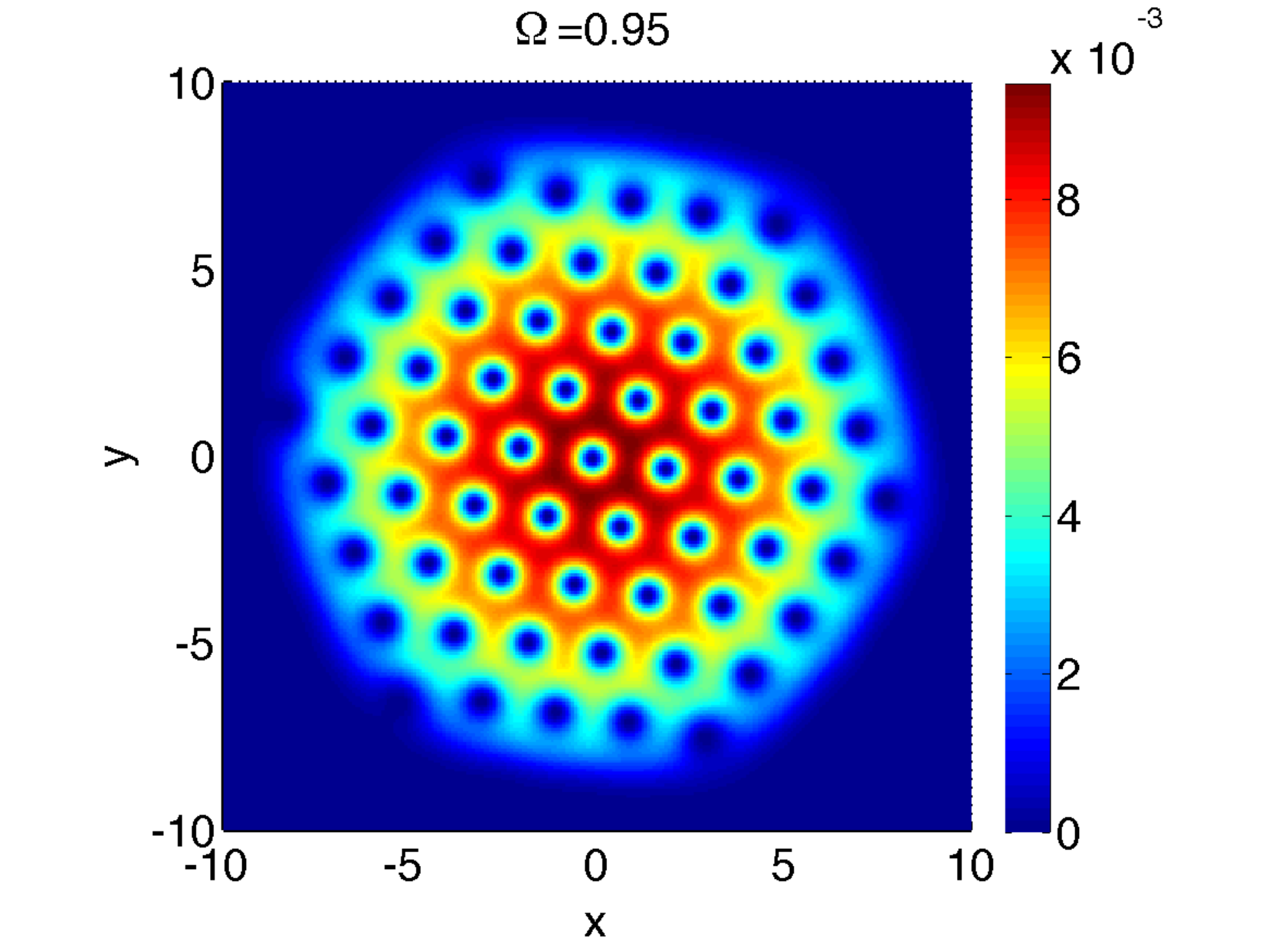}
 \end{minipage}
\end{figure}

\begin{table}[htdp]\caption{
Energy obtained numerically with different initial data
of rotating BECs for $\beta=1000$  and different $\Omega$ in \S 4.3.}
\label{tab-fp2d-Rot-Bet-1000-energy}
\setlength{\tabcolsep}{4pt}
\begin{center}
\begin{tabular}{lllllllll}\hline
$\Omega$ & 0.00 & 0.25 & 0.50 & 0.60 & 0.70 & 0.80 & 0.90 & 0.95 \\ \hline
$({\rm a})$  & 11.9718  & 11.9718  & 11.0954$^\dag$  & 10.4392 & 9.5335  & 8.2610  & 6.3608  & 4.8830  \\ 
$({\rm b})$  & 11.9718  & 11.9266  & 11.1326  & 10.4392  & 9.5283  & 8.2610  & 6.3607  & 4.8825  \\ 
$({\rm\bar b})$  & 11.9718  & 11.9266  & 11.1054  & 10.4392  & 9.5335 & 8.2631  & 6.3607  & 4.8827  \\ 
$({\rm c})$  & 11.9718  & 11.9165  & 11.1054  & 10.4392  & 9.5289
& 8.2610  & 6.3607  & 4.8823$^\dag$ \\ 
$({\rm\bar c})$  & 11.9718  & 11.9165  & 11.1326  & 10.4392  & 9.5283  & 8.2610  & 6.3607 & 4.8825 \\ 
$({\rm d})$  & 11.9718  & 11.9266  & 11.1054  & 10.4392  & 9.5289 & 8.2632  & 6.3608  & 4.8825  \\ 
$({\rm\bar d})$  & 11.9718$^\dag$  & 11.9165$^\dag$  & 11.1326  & 10.4392$^\dag$ & 9.5283$^\dag$  & 8.2610$^\dag$  & 6.3607$^\dag$  & 4.8830  \\ \hline
\end{tabular}
\end{center}
\end{table}

\begin{table}[htdp]\caption{Ground state energy, the number of iterations for
the regularized Newton method (iter) on the finest mesh and the total computational time (cpu)
of rotating BECs for $\beta=1000$  and different $\Omega$ in \S 4.3.}
\label{tab-fp2d-Rot-Bet-1000-iter-cpu}
\setlength{\tabcolsep}{4pt}
\begin{center}
\begin{tabular}{ccccccccc}\hline
$\Omega$ & 0.00 & 0.25 & 0.50 & 0.60 & 0.70 & 0.80 & 0.90 & 0.95 \\ \hline
{\rm iter}    &   3  &   3  &    3 &  10 &  10 &  72 &  41 & 157  \\ 
{\rm cpu\,(s)}   & 1.18 &  28.52 & 108.98 &  106.86 &  105.28 &  313.67 &  825.12 & 751.72  \\ \hline \hline
{\rm Energy} & 11.9718  & 11.9165  & 11.0954  & 10.4392 & 9.5283  & 8.2610  & 6.3607  & 4.8823\\ \hline
\end{tabular}
\end{center}
\end{table}

\begin{figure}[htdp]\caption{Plots of the ground state density
  $|\phi_g(x,y)|^2$ -- corresponding to the energy listed in the Table \ref{tab-fp2d-Rot-Bet-1000-energy} --
of rotating BECs for $\beta=1000$ and different $\Omega$ in \S4.3.}
\label{fig-fp2d-Rot-Bet-1000}
\begin{minipage}[t]{\textwidth}
\centering
 \includegraphics[width=0.4\textwidth]{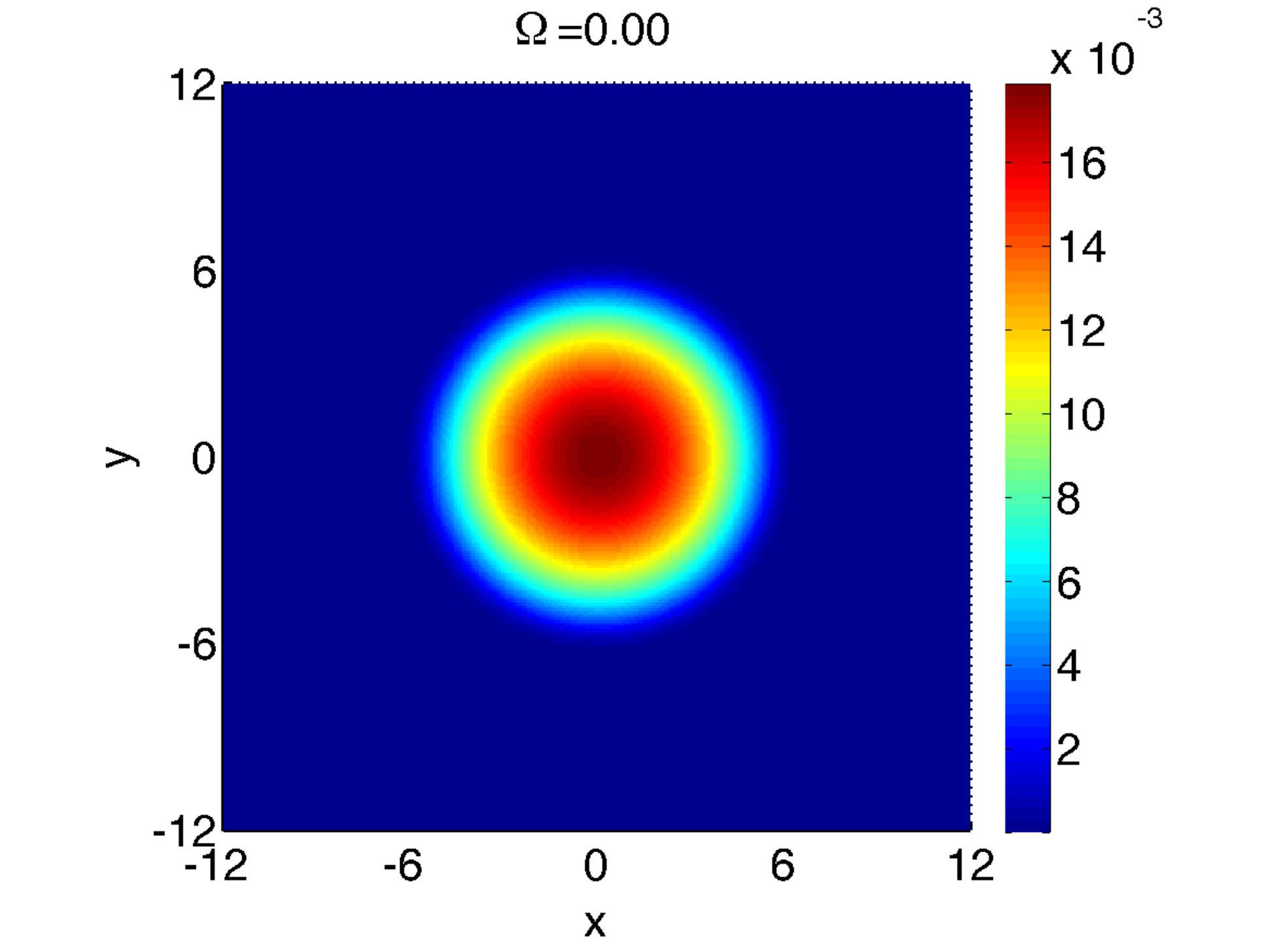}
 \includegraphics[width=0.4\textwidth]{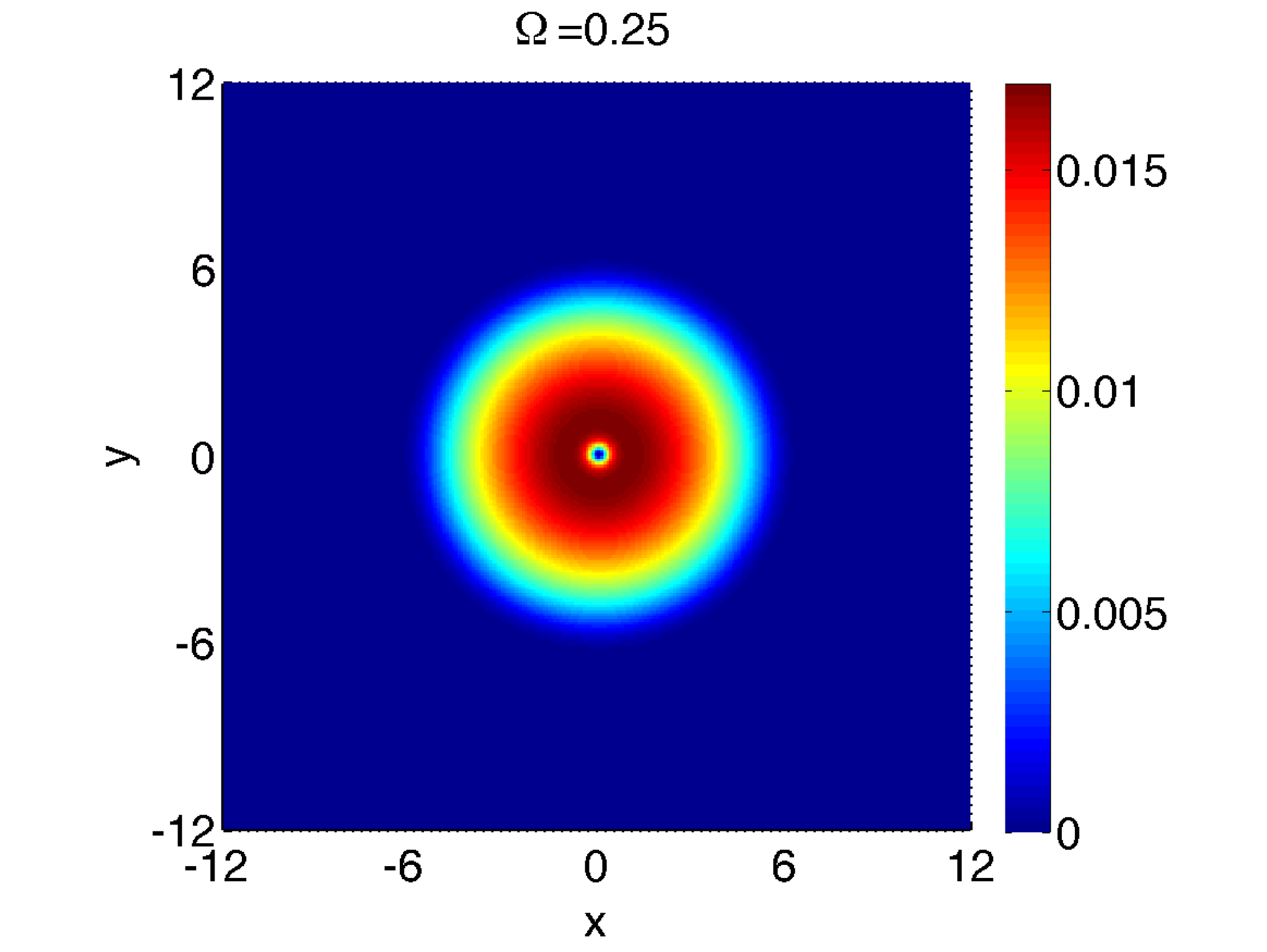}\\
 \includegraphics[width=0.4\textwidth]{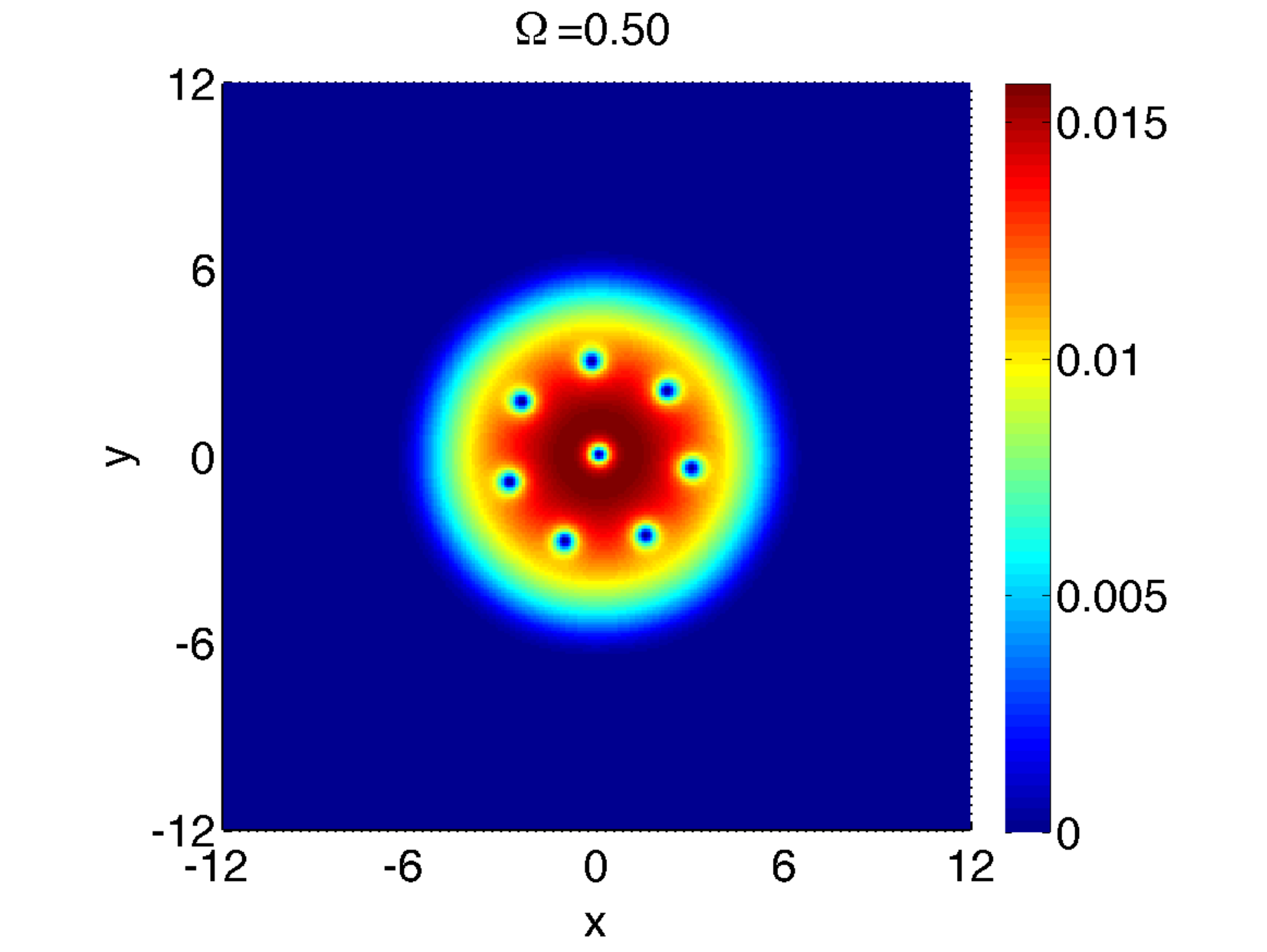}
 \includegraphics[width=0.4\textwidth]{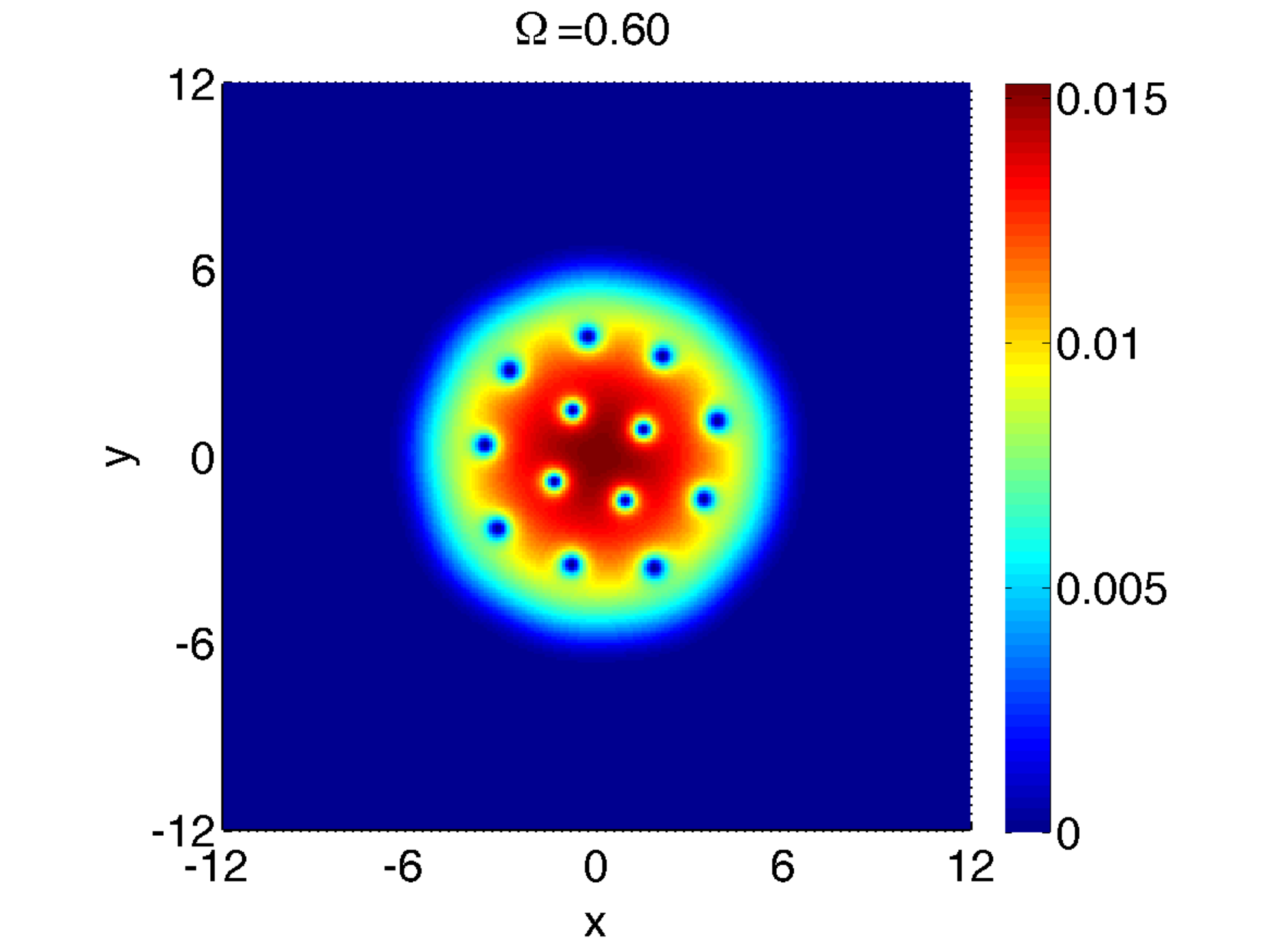}\\
 \includegraphics[width=0.4\textwidth]{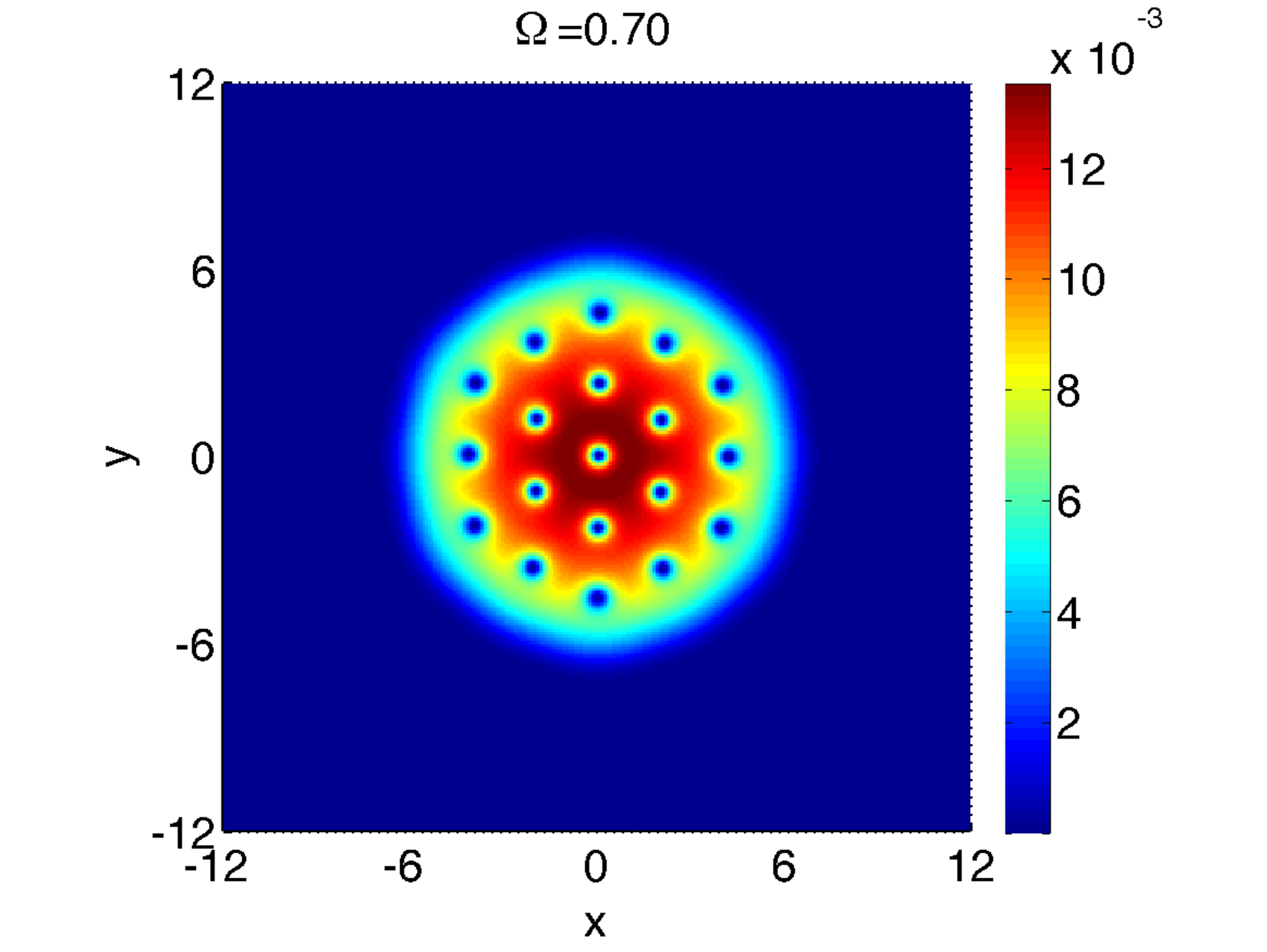}
 \includegraphics[width=0.4\textwidth]{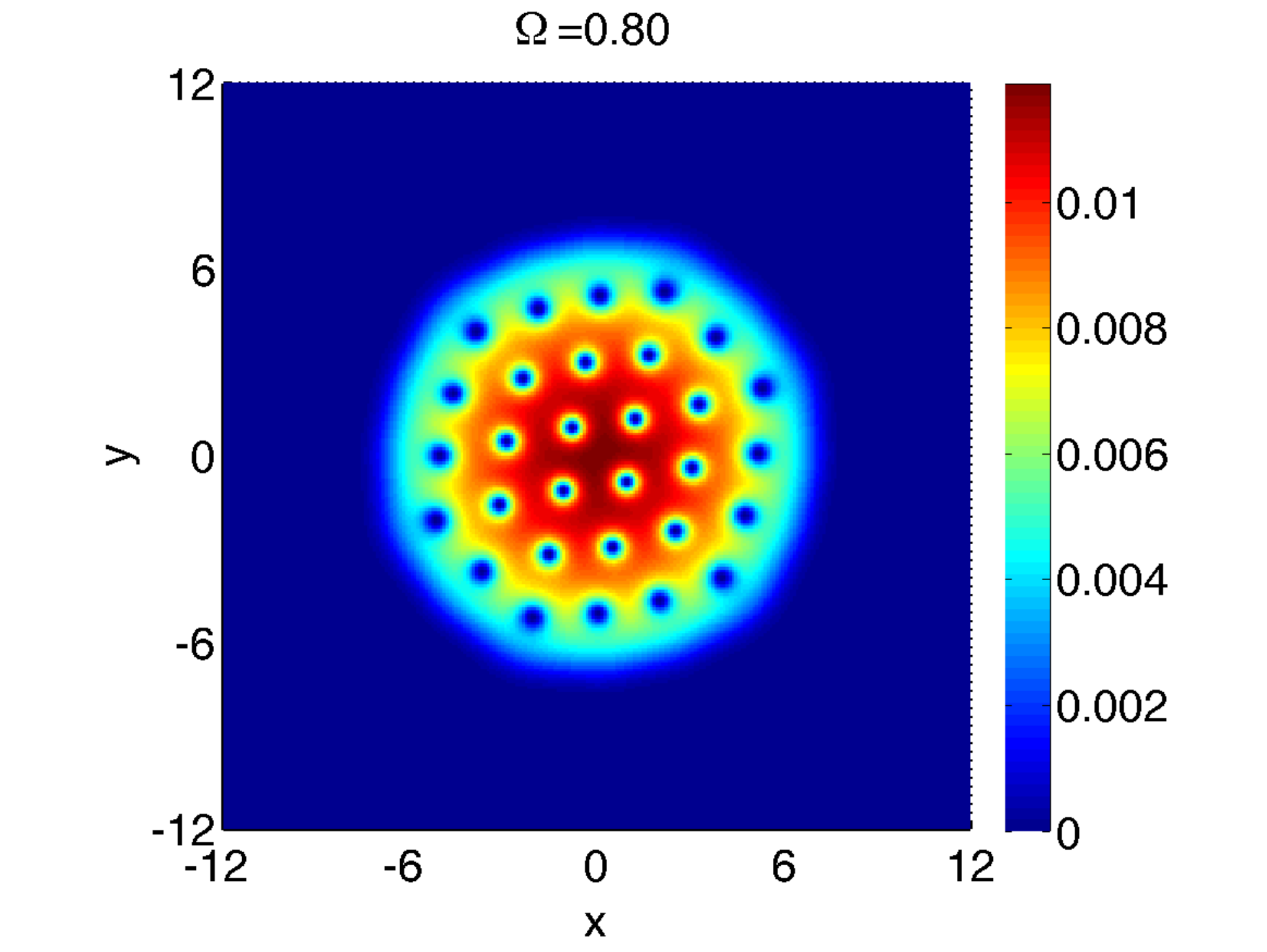}\\
 \includegraphics[width=0.4\textwidth]{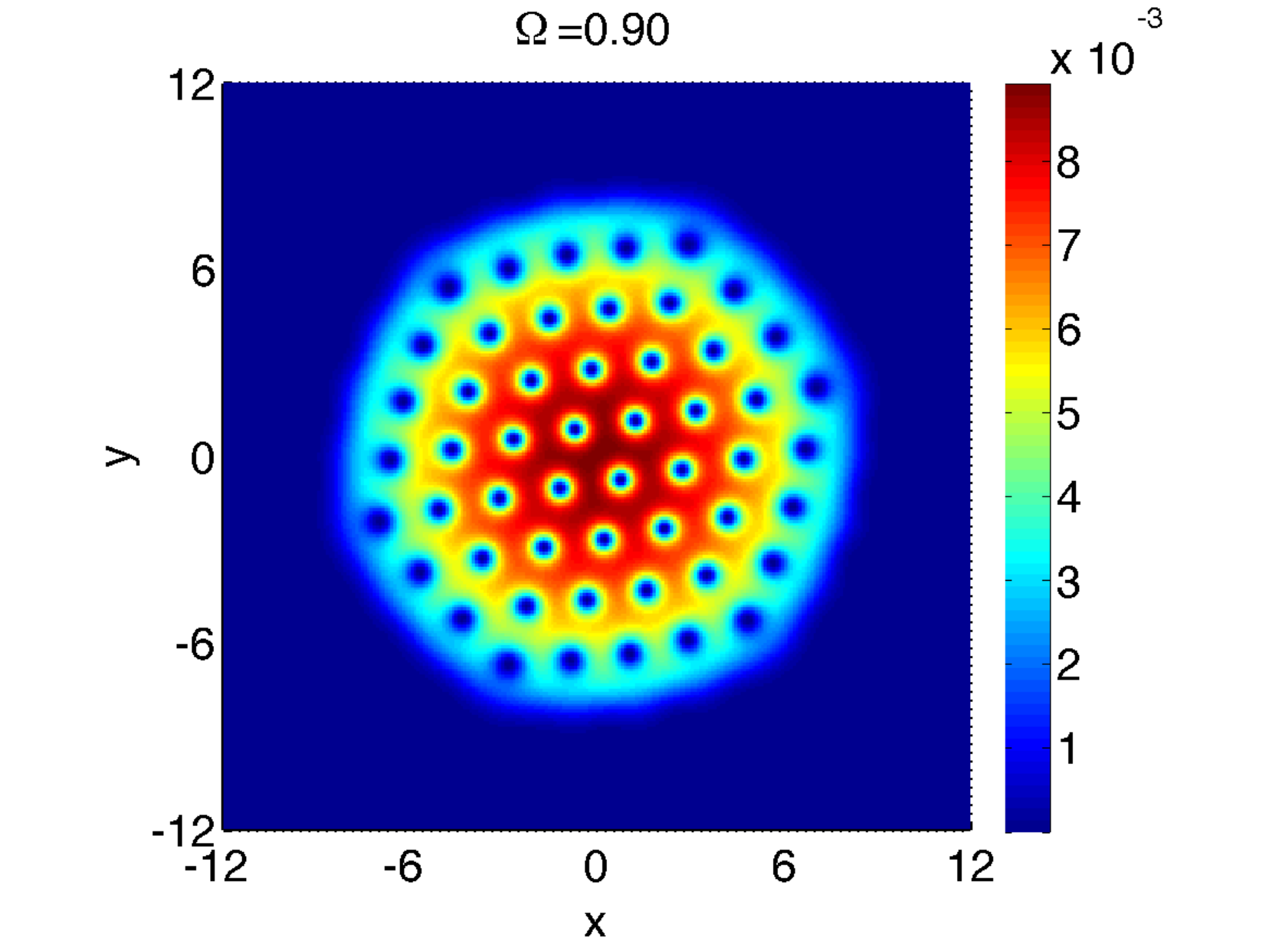}
 \includegraphics[width=0.4\textwidth]{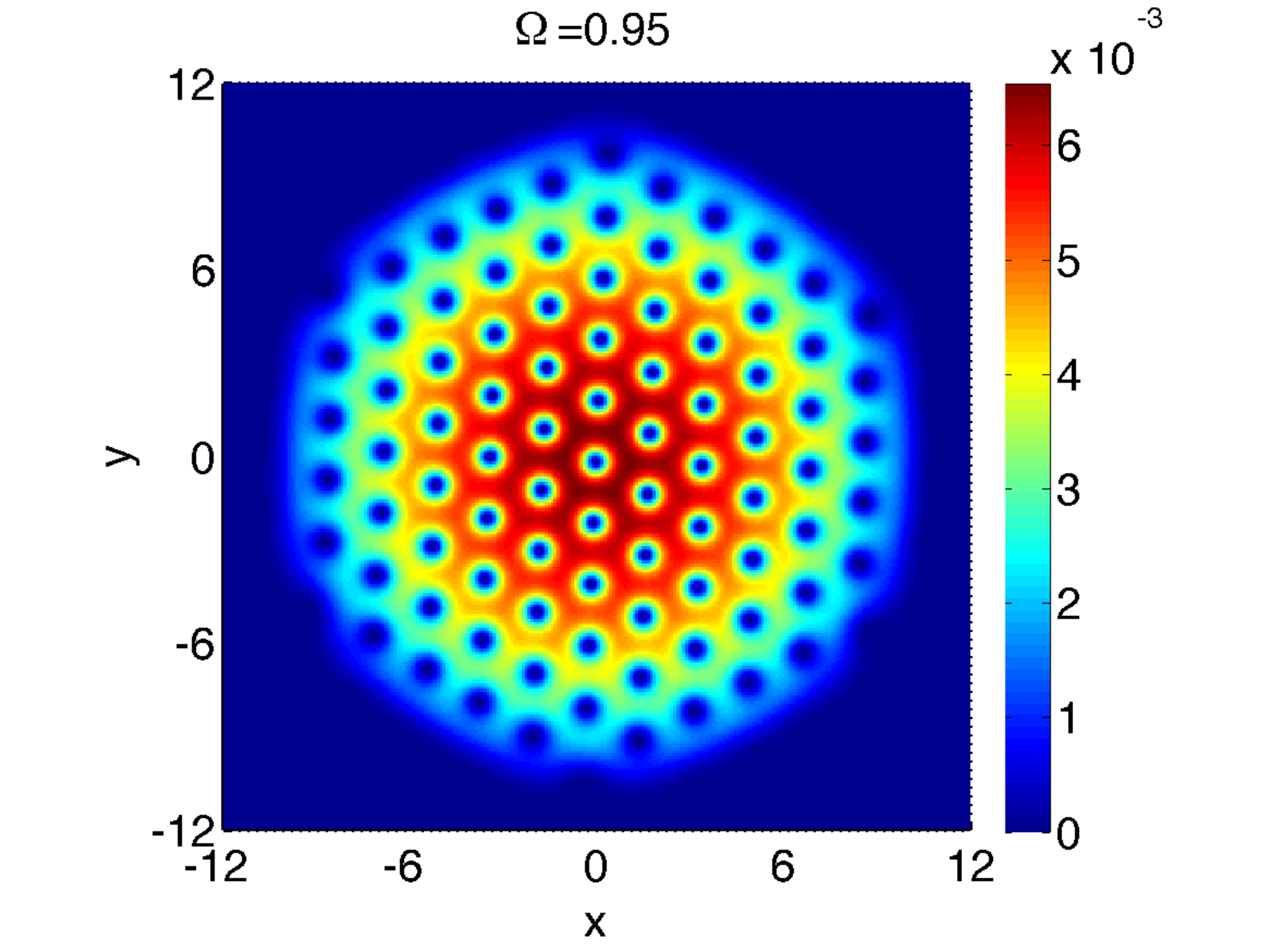}
 \end{minipage}
\end{figure}

From Tables \ref{tab-fp2d-Rot-Bet-500-energy}-\ref{tab-fp2d-Rot-Bet-1000-iter-cpu},
among those different initial data,  either (d) or ($\bar{\rm d}$) gives the lowest energy in most cases.
Thus, in practical computations, we recommend to choose either (d) or ($\bar{\rm d}$) as the initial data.
Also, it is observed that the regularized Newton algorithm converges quickly
to the stationary solution
within very few iterations, even for strong interaction, i.e., $\beta\gg1$, and fast rotation
i.e., $\Omega$ is near $1$. Compared with the normalized gradient flow method via BEFD or BESP discretization
for computing ground state of a rotating BEC
\cite{AD,Aft2003,GPELab-2014,Bao-Cai-2013-review,Bao-Wang-Markowich-2005},
the regularized Newton algorithm
significantly reduces the computational time.

\subsection{Application to compute asymmetric excited states}
When the trapping potential $V(\xb)$ in (\ref{prob-min}) is symmetric and the BEC is non-rotating,
similarly to those numerical methods presented in the literatures
\cite{Bao-Cai-2013-review,Bao-Chern-Lim-2006,Bao-Du-2004,Bao-Tang-2003},
our numerical methods can also be applied to compute the asymmetric excited states
provided that the initial data is chosen as an asymmetric function.
To demonstrate this, we take $d=2$, $\Omega=0$ and $\beta=500$ in (\ref{prob-min}) and
the trapping potential is chosen as a combined harmonic and optical lattice potential
\be\label{asypon}
V(x,y) = \frac12\left(x^2+y^2\right) + 50\left[ \sin^2\left(\frac{\pi x}{4}\right)
+ \sin^2\left(\frac{\pi y}{4}\right) \right].
\ee
The ground and asymmetric states are numerically computed by the Algorithm \ref{alg:ConOptM}
via the SP discretization
on the bounded computational domain $U=(-16,16)^2$ which is partitioned uniformly
with the number of nodes $N_1=N_2=2^8+1$ in each direction.
The initial data is chosen as the TF approximation
\eqref{TFA} for computing the ground state $\phi_g$,
as $\phi_0(x,y)=\frac{\sqrt{2}x}{\pi^{1/2}}e^{-(x^2+y^2)/2}$ for the asymmetric excited state in  the
$x$-direction $\phi_{10}$,
as $\phi_0(x,y)=\frac{\sqrt{2}y}{\pi^{1/2}}e^{-(x^2+y^2)/2}$ for the asymmetric
excited state in the $y$-direction $\phi_{01}$,
and as $\phi_0(x,y)=\frac{2xy}{\pi^{1/2}}e^{-(x^2+y^2)/2}$ for the asymmetric excited state
in both $x$- and $y$-directions $\phi_{11}$, respectively.
The stopping criterion is set to the default value.
Table \ref{table-sp2d-Exc} lists different quantities of these states and computational cost
by our algorithm. In addition, Figure \ref{fig-sp2d-Exc} shows contour plots of
these states.

\begin{table}[htdp]\caption{Different quantities of the ground and asymmetric excited states
and the corresponding computational cost for a BEC in 2D with
the  potential  (\ref{asypon}) and $\beta=500$ in \S 4.4.}
\label{table-sp2d-Exc}
\setlength{\tabcolsep}{6pt}
\begin{center}
\begin{tabular}{cccccccccc}\hline
$\phi$ & $\max|\phi|^2$  &  $E(\phi)$  &  $\mu(\phi)$  &  $x_{\mathrm{rms}}$  &
$y_{\mathrm{rms}}$  & iter & nfe & cpu\,(s) \\ \hline
$\phi_g$  & 0.0820 & 32.2079 & 41.7854 & 2.9851 & 2.9851 & 365 & 380 & 3.99 \\ 
$\phi_{10}$ & 0.0746 & 34.6053 & 43.8248 & 3.3029 & 2.8741 & 285 & 301 & 3.18 \\ 
$\phi_{01}$ & 0.3749 & 34.6053 & 43.8248 & 2.8741 & 3.3029 & 272 & 288 & 3.03 \\ 
$\phi_{11}$ & 0.0666 & 37.0864 & 46.1442  & 3.1434 & 3.1434 & 117 & 125 & 1.32 \\ \hline
\end{tabular}
\end{center}
\end{table}

\begin{figure}[htdp]\caption{Contour plots of the ground state $\phi_g$ (a), asymmetric excited state in the $x$-direction $\phi_{10}$ (b), (c) asymmetric excited state in the $y$-direction $\phi_{01}$ (c), and
asymmetric excited state in both $x$- and $y$- directions $\phi_{11}$ (d) of a BEC in 2D with
the  potential  (\ref{asypon}) and $\beta=500$ in \S 4.4.}
\label{fig-sp2d-Exc}
\begin{minipage}[t]{1\textwidth}
\centering
\subfigure[]{
\begin{minipage}[t]{0.45\textwidth}
\centering
 \includegraphics[width=0.95\textwidth, height =
 0.8\textwidth]{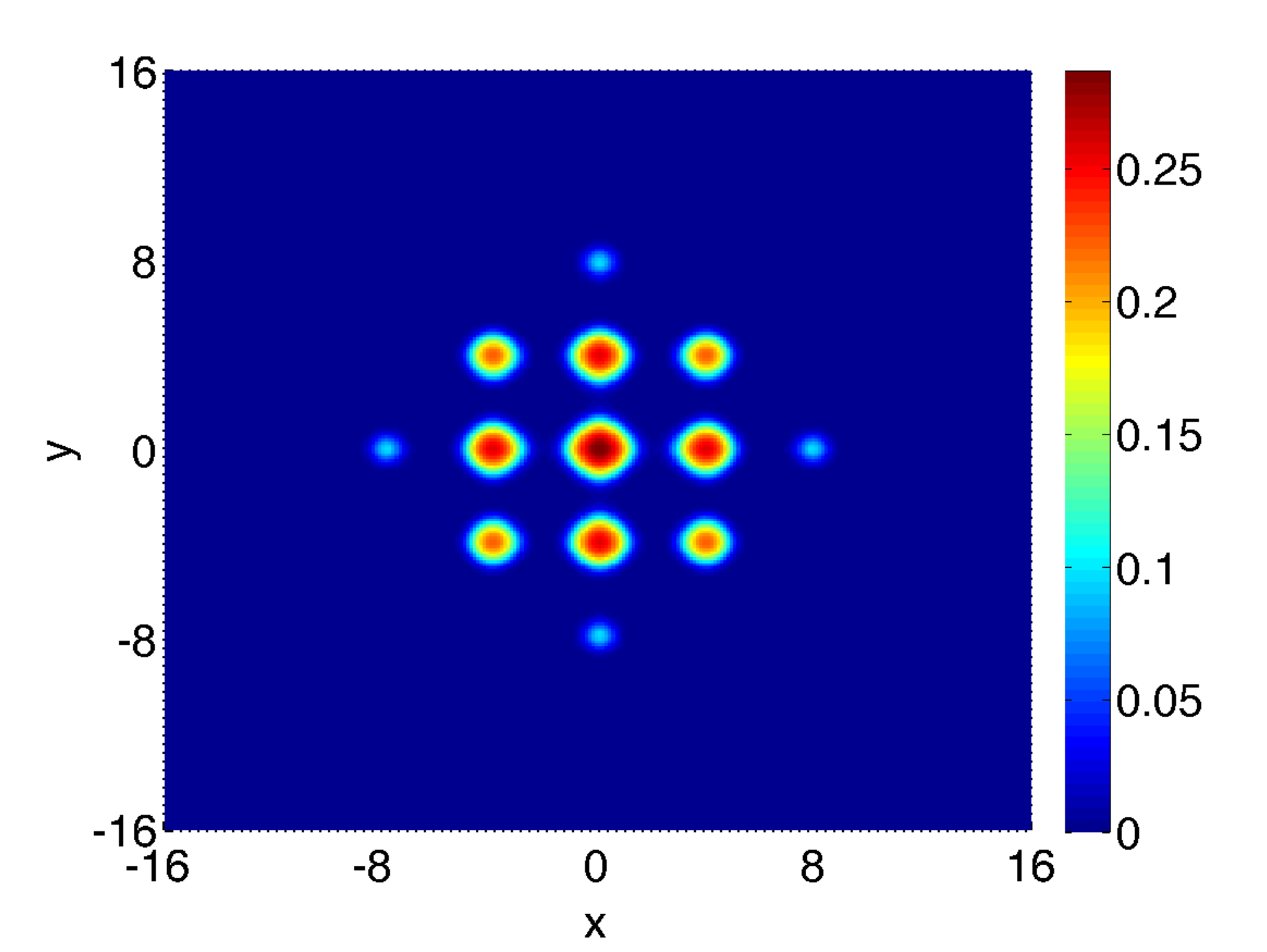}
\end{minipage}}
\subfigure[]{
\begin{minipage}[t]{0.45\textwidth}
\centering
 \includegraphics[width=0.95\textwidth, height =
 0.8\textwidth]{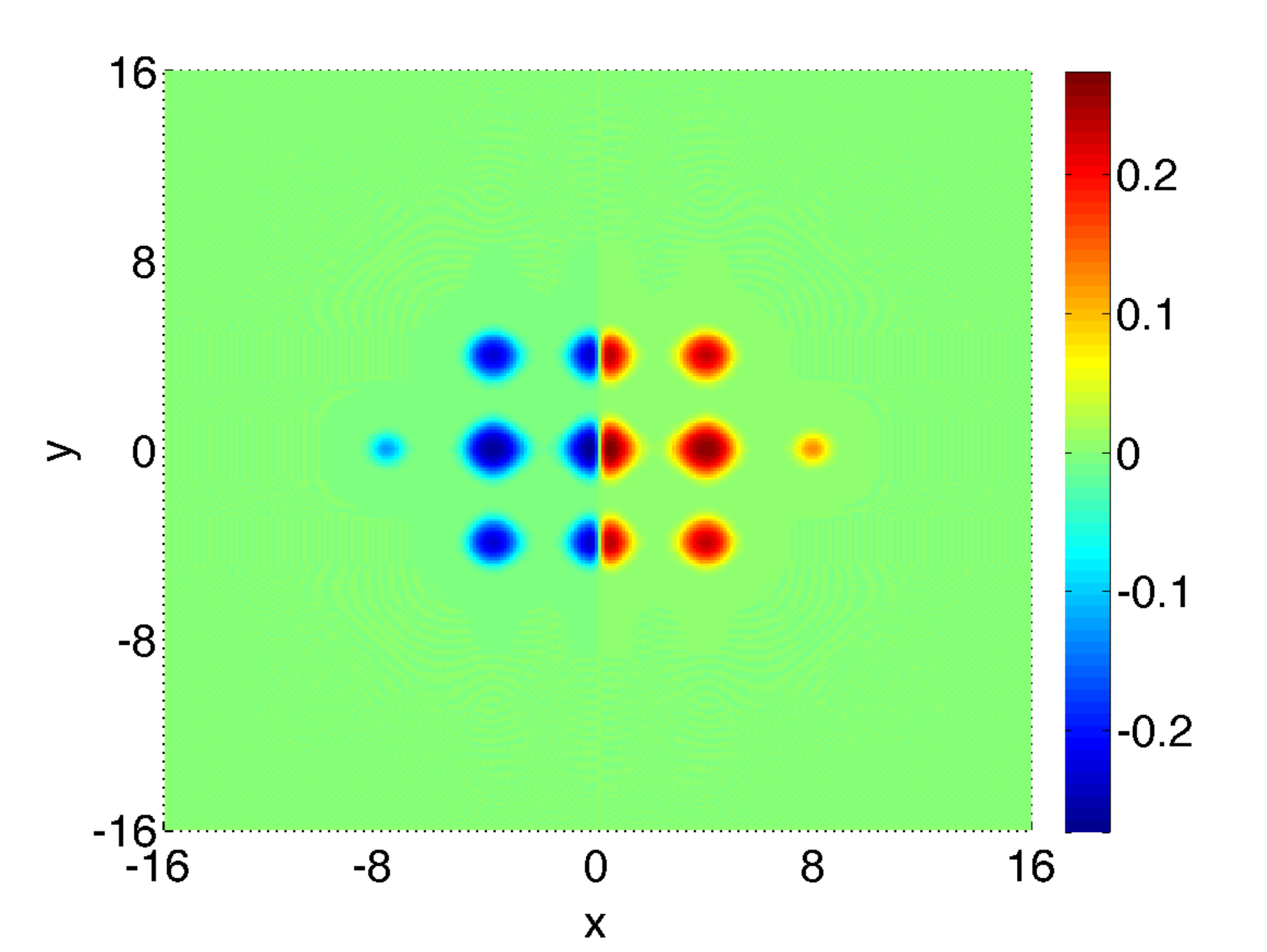}
\end{minipage}}
\subfigure[]{
\begin{minipage}[t]{0.45\textwidth}
\centering
 \includegraphics[width=0.95\textwidth, height =
 0.8\textwidth]{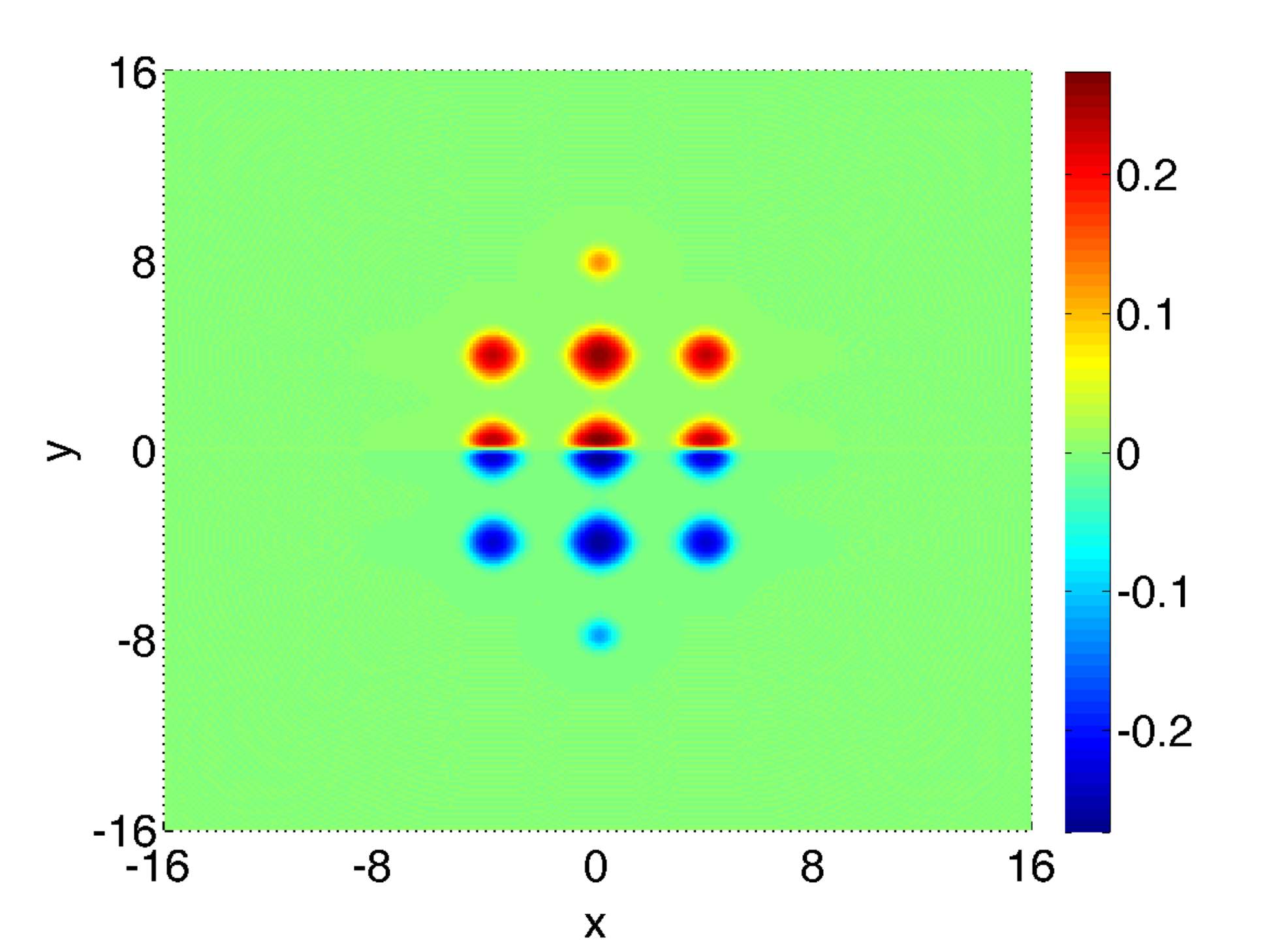}
\end{minipage}}
\subfigure[]{
\begin{minipage}[t]{0.45\textwidth}
\centering
 \includegraphics[width=0.95\textwidth, height =
 0.8\textwidth]{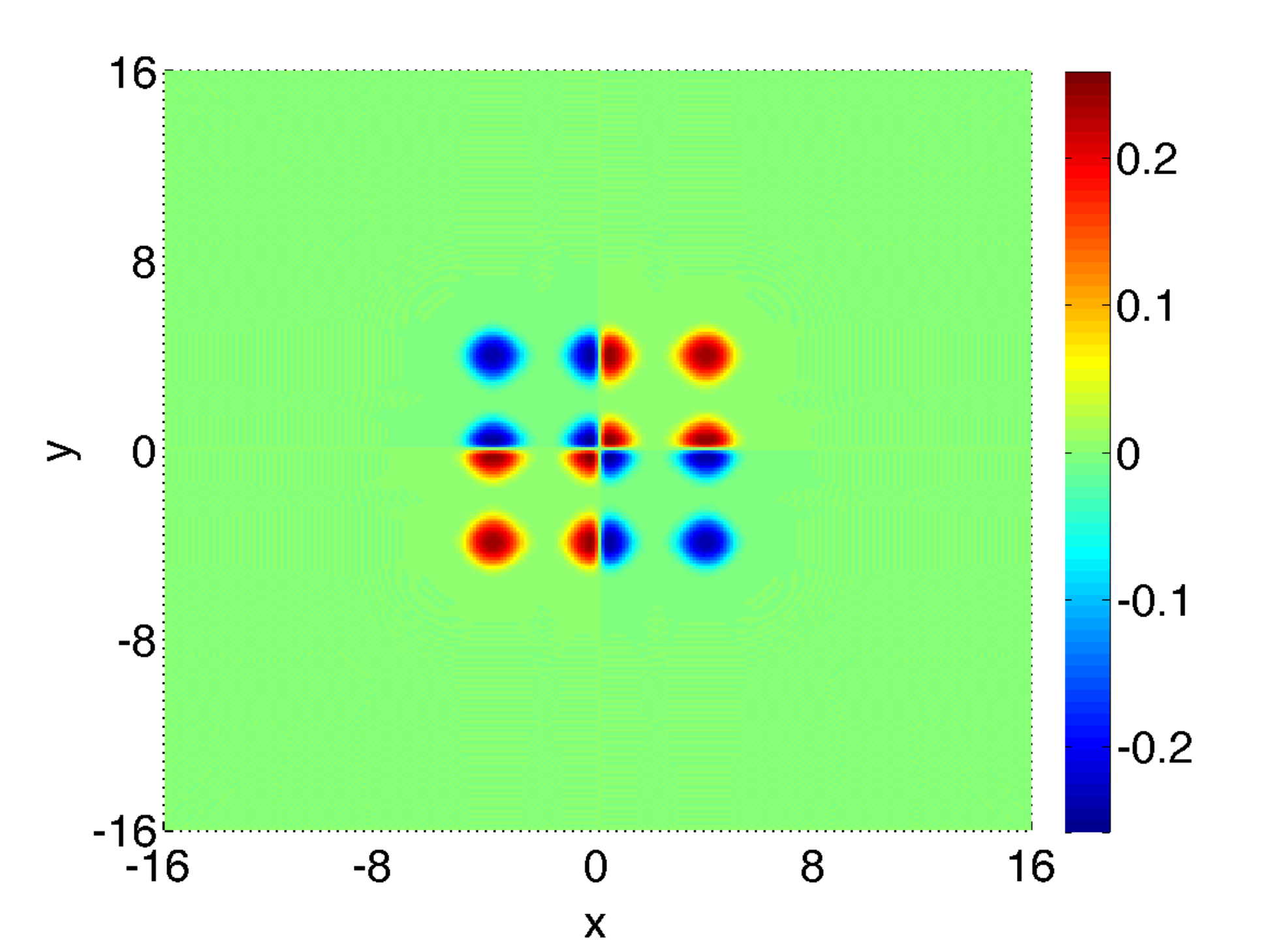}
\end{minipage}}
\end{minipage}
\end{figure}

From Table \ref{table-sp2d-Exc} and Figure \ref{fig-sp2d-Exc},
we can see that our algorithm can be used to compute the asymmetric excited states
provided that the initial data is taken as asymmetric functions.
The numerical results from our algorithm agree very well with
those reported in the literatures
\cite{Bao-Cai-2013-review,Bao-Chern-Lim-2006,Bao-Du-2004,Bao-Tang-2003}.
However, our algorithm is much faster than those methods
in the literatures
\cite{Bao-Cai-2013-review,Bao-Chern-Lim-2006,Bao-Du-2004,Bao-Tang-2003}
for computing the asymmetric excited states.

\section{Concluding remarks}\label{conclusion}

Different spatial discretizations including the finite difference method, sine pesudospectral
and Fourier pseudospectral methods were adopted to
discretize the energy functional and constraint for
computing the ground state of Bose-Einstein condensation (BEC).
Then the original infinitely dimensional constrained minimization
problem was reduced to a finite dimensional minimization problem
with a spherical constraint.
A regularized Newton method was proposed by using a feasible gradient type
method as an initial approximation and solving a standard trust-region subproblem
obtained from approximating the energy
functional by its second-order Taylor expansion
with a regularized term at each
Newton iteration as well as adopting a cascadic multigrid
technique for selecting initial data.
The convergence of the method was established by the standard optimization theory.
Extensive numerical examples of non-rotating BEC in 1D and 3D and rotating BEC in 2D with different
trapping potentials and parameter regimes
demonstrated the efficiency and accuracy as well as
 robustness of our method.
Comparison to existing numerical methods in the literatures showed that our numerical
method is significantly faster than those methods proposed in the literatures
for computing ground states of BEC.


\end{document}